\newcommand{\dd}{\,\mathrm{d}}
\renewcommand{\Tilde}{\widetilde}
\renewcommand{\Hat}{\widehat}
\renewcommand{\Bar}{\overline}
\newcommand{\R}{\mathbb{R}}
\newcommand{\N}{\mathbb{N}}
\newcommand{\NN}{\mathbb{N}}
\newcommand{\eps}{\varepsilon}
\newcommand{\cD}{\mathcal{D}}
\newcommand{\cH}{\mathcal{H}}
\newcommand{\U}{\mathcal{U}}
\newcommand{\V}{\mathcal{V}}
\newcommand{\supp}{\mathrm{supp\,}}
\newcommand{\vol}{\mathop{\mathrm{Vol}}\nolimits}
\newcommand{\spec}{\mathop{\mathrm{spec}}}
\newcommand{\ess}{\mathrm{ess}}
\newtheorem{theorem}{Theorem}[section]
\newtheorem{prop}[theorem]{Proposition}
\newtheorem{cor}[theorem]{Corollary}
\newtheorem{lemma}[theorem]{Lemma}
\theoremstyle{definition}
\newtheorem{rem}[theorem]{Remark}
\begin{document}

\title{\bf Asymptotics of Robin eigenvalues\\ on sharp infinite cones}

\author{Konstantin Pankrashkin\dag{} and Marco Vogel\ddag\\[\smallskipamount]
	\small Carl von Ossietzky Universit\"at Oldenburg,\\
	\small Institut f\"ur Mathematik,\\
	\small 26111 Oldenburg, Germany\\[\smallskipamount]
	\small \dag{} Corresponding author, ORCID: 0000-0003-1700-7295 \\
	\small E-Mail: \url{konstantin.pankrashkin@uol.de},\\
	\small Webpage: \url{http://uol.de/pankrashkin}\\[\smallskipamount]
	\small \ddag{} E-Mail: \url{marco.vogel@uol.de}
}

\date{}

\maketitle

%
%\address{Carl von Ossietzky Universit\"at Oldenburg, Institut f\"ur Mathematik,	Postfach 5634, 26046 Oldenburg, Germany}
%\email{konstantin.pankrashkin@uol.de}
%
%\author{Marco Vogel}
%\address{Carl von Ossietzky Universit\"at Oldenburg, Institut f\"ur Mathematik,	Postfach 5634, 26046 Oldenburg, Germany}
%\email{marco.vogel@uol.de}
%

\begin{abstract}
	Let $\omega\subset\mathbb{R}^n$ be a bounded domain with Lipschitz boundary.
	For $\varepsilon>0$ and $n\in\mathbb{N}$ consider the infinite cone
	\[
	\Omega_{\varepsilon}:=\big\{(x_1,x')\in (0,\infty)\times\mathbb{R}^n: x'\in\varepsilon x_1\omega\big\}\subset\mathbb{R}^{n+1}
	\]
	and the operator $Q_{\varepsilon}^{\alpha}$ acting as the Laplacian $u\mapsto-\Delta u$
	on $\Omega_{\varepsilon}$ with the Robin boundary condition
	$\partial_\nu u=\alpha u$ at $\partial\Omega_\varepsilon$, where $\partial_\nu$ is the outward normal derivative and $\alpha>0$.
	We look at the dependence of the eigenvalues of $Q_\eps^\alpha$ on the parameter $\eps$: this problem
	was previously addressed for $n=1$ only (in that case, the only admissible $\omega$ are finite intervals).	In the present work we consider arbitrary dimensions $n\ge2$ and arbitrarily shaped ``cross-sections'' $\omega$
	and look at the spectral asymptotics as 
	$\varepsilon$ becomes small, i.e. as the cone becomes ``sharp'' and collapses to a half-line. It turns out 	that the main term of the asymptotics of individual eigenvalues is determined by the single geometric quantity
	\[
		N_\omega:=\frac{\mathrm{Vol}_{n-1} \partial\omega}{\mathrm{Vol}_n \omega}.
	\]
	More precisely, for any fixed $j\in \mathbb{N}$ and $\alpha>0$ the $j$th eigenvalue $E_j(Q^\alpha_\varepsilon)$ of $Q^\alpha_\varepsilon$ exists for all sufficiently small $\eps>0$
	and satisfies
	\[
	E_j(Q^\alpha_\varepsilon)=-\frac{N_\omega^2\,\alpha^2}{(2j+n-2)^2\,\varepsilon^2}+O\left(\frac{1}{\varepsilon}\right) \text{ as $\varepsilon\to 0^+$.}
\]
The paper also covers some aspects of Sobolev spaces on infinite cones, which can be of independent interest.
\end{abstract}

\noindent {\bf Keywords:} Laplacian, Robin boundary condition, asymptotics of eigenvalues, spectral problems in unbounded domains

\smallskip

\noindent{\bf MSC 2020:} 35P15, 47A75, 35J05  

\section{Introduction}\label{intro}
Let $\omega\subset\R^n$ be a bounded domain (connected open set) with Lipschitz boundary. 
For $\varepsilon>0$ consider the open set
\begin{equation*}
\Omega_{\eps}:=\big\{(x_1,x')\in (0,\infty)\times\R^n: \, x'\in \eps x_1\omega\big\}\subset\R^{n+1}.
%\label{def-veps}
\end{equation*}
Geometrically, the set $\Omega_\eps$ is an infinite cone in $\R^{n+1}$ such that
the intersection of $\Omega_\eps$ with the hyperplane $x_1=a$ gives the set $\eps a\omega$.
We are interested in some spectral properties of a Robin Laplacian on $\Omega_\eps$ as $\eps$ becomes small, i.e. when the cone becomes ``sharp'' and collapses to the half-line $(0,\infty)\times\{0\}$. Namely, for $\alpha>0$ denote by $Q_{\varepsilon}^{\alpha}$ the self-adjoint operator in $L^2(\Omega_{\varepsilon})$ generated by the closed, densely defined, symmetric bilinear form
\begin{align*}
q_{\eps}^{\alpha} (u,u)&=\int_{\Omega_{\eps}} |\nabla u|^2\dd x-  \alpha \int_{\partial\Omega_{\eps}} u^2\, \dd\sigma, \quad D(q_{\eps}^{\alpha})=H^1(\Omega_{\eps}),
\end{align*}
where $\dd\sigma$ stands for the $n$-dimensional Hausdorff measure. The semiboundedness and the closedness
are not completely obvious as $\Omega_\eps$ is unbounded and may have a non-Lipschitz singularity at the origin: we
discuss these aspects in detail in the appendices. Informally, the operator $Q_{\eps}^{\alpha}$ can be viewed as the positive Laplacian, $u\mapsto -\Delta u$, with the Robin boundary condition $\partial_{\nu}u=\alpha u$, where  $\partial_{\nu}$ is the outward normal derivative; we refer to \cite{bm,gm,gm2} for a discussion of various aspects related to the precise description
of the operator domain. Such operators are often referred
to as Robin Laplacians with negative parameters~\cite{bfk} due to the negative contribution of the boundary term in the bilinear form.  The cone $\Omega_\eps$ is invariant with respect to the dilations $x\mapsto tx$ for any $t>0$, and
standard arguments show the unitary equivalence
$Q_{\eps}^{\alpha}\simeq \alpha^{2}Q^1_{\eps}$. Hence, it will be convenient to consider $\alpha=1$ only and to study
the operator and the form
\[
Q_{\eps}:=Q_{\eps}^{1}, \quad
q_\eps:=q^1_\eps.
\]

For a review of spectral problems with Robin boundary conditions we refer to~\cite{bfk}.
In particular, the eigenvalues of Robin Laplacians on infinite cones play a central role in the strong coupling asymptotics of Robin eigenvalues on general domains.
Namely, if $\Omega$ is an open set in some large class and $T^{\Omega,\alpha}$ is the Robin Laplacian on $\Omega$
defined as the operator associated with the symmetric bilinear form
\[
t^{\Omega,\alpha}(u,u)=\int_\Omega|\nabla u|^2\dd x- \alpha \int_{\partial\Omega} u^2\, \dd\sigma, \quad u\in H^1(\Omega),
\]
then the lower edge $\Lambda_1(T^{\Omega,\alpha})$ of the spectrum of $T^{\Omega,\alpha}$ satisfies
\[
\Lambda_1(T^{\Omega,\alpha})=\alpha^2 \inf_{x\in \partial\Omega} \Lambda_1(T^{U_x,1}) +o(\alpha^2)\quad \text{as $\alpha\rightarrow+\infty$},
\]
where $T^{U_x,1}$ is the Robin Laplacian on the infinite tangent cone $U_x$ at $x\in\partial\Omega$.
We refer to~\cite{bp,levitin} for technical details and precise definitions and to \cite{HK,HKR,hp,KKR,kobp,pp15b} for a more precise eigenvalue analysis under more specific regularity
assumptions. The function $\alpha\mapsto \Lambda_1(T^{\Omega,\alpha})$ plays a role in the study of some non-linear equations as discussed in \cite{lacey}.
Eigenvalues and eigenfunctions of sharp cones can be used to produce counterexamples to spectral gap estimates \cite{kielty}.  In addition, such operators
attract some attention as examples of geometric ``long-range'' configurations producing an infinite discrete spectrum \cite{bpp,exmin,kp16}. Let us summarize the available spectral information for $Q_\eps$.

The essential spectrum of $Q_\eps$ depends in a non-trivial way on $\omega$ and $\eps$. If $\omega$ has smooth boundary, then  in virtue of \cite[Thm.~1]{kp16} the essential spectrum of $Q_\eps$ is $[-1,+\infty)$, as $\Omega_\eps$ is smooth outside the origin. For non-smooth $\omega$ the essential spectrum is determined through an iterative procedure and can look differently: see the detailed discussion in~\cite{bp}.

If $\omega$ is the unit ball centered at the origin of $\R^n$, then $Q_\eps$ is a round cone
whose lateral
surface  forms the constant angle $\theta:=\arctan\varepsilon$ with the central axis,
and the bottom of the spectrum of $Q_\eps$ is the eigenvalue
	\begin{equation}
		\label{eball}
	E_1(Q_\eps)=-\dfrac{1}{\sin^2\theta}\equiv -\dfrac{1+\eps^2}{\eps^2}
	\end{equation}
	with eigenfunction $\psi(x_1,x')=\exp(-x_1/\sin\theta)$. In fact, only $n=1$ and $n=2$ were considered explicitly,
	see e.g. \cite[Lem.~2.6]{levitin} and \cite[Prop.~4.2]{kl22}, but the constructions literally hold for arbitrary dimensions $n$.

The ~case $n=1$ ($\Omega_\eps$ is an infinite planar sector) was studied in detail in \cite{kp18}.
The only admissible sets $\omega$ are finite intervals, so without loss of generality we can take $\omega:=(-1,1)$.
In \cite{kp18} it was shown that the discrete spectrum of $Q_\eps$ is always finite, but  the number of eigenvalues grows unboundedly as $\eps$ becomes small, and for each  fixed $j\in\NN$ (we use the convention $0\notin\N$)
the $j$th eigenvalue
	$E_j(Q_\eps)$ behaves as
\begin{equation}
\label{ej2d}
E_j(Q_\eps)=-\frac{1}{(2j-1)^2\eps^2}+O(1) \text{  as $\eps\to 0^+$.}
\end{equation}
Some explicit formulas for eigenpairs of $Q_\eps$ in this particular case were obtained in~\cite{lyal}, but it is unclear
if the constructed family exhausts the whole discrete spectrum.
	
If $n\ge 2$, the discrete spectrum of $Q_\eps$ may be infinite. For example, if $n=2$
and $\omega$ is simply connected with smooth boundary, then the infiniteness of the discrete spectrum follows by \cite[Cor.~8]{kp16}, since the complement of $\Omega_\eps$ is not a convex set  (similar arguments apply in higher dimensions: we refer to \cite{kp16} for details). On the other hand, for polyhedral $\omega$ the discrete spectrum can be finite. For example, if one chooses $\omega$ in such a way that $\Omega_1$ is an isometric copy of $(0,\infty)^{n+1}$, then an easy analysis based on the separation
of variables method shows that the discrete spectrum of $Q_1$ consists of a single eigenvalue $-(n+1)$.
For $n=2$ and smooth $\omega$, the accumulation rate of eigenvalues at the bottom of the essential spectrum
was studied in \cite{bpp}. Furthermore, in \cite{kl22} it was shown that round infinite
cones maximize the first eigenvalue among all cones with the same perimeter of the spherical cross-section.
Various two-sided estimates for the bottom of the spectrum were obtained in \cite{levitin}. In particular, it was shown that the lowest eigenvalue can be computed explicitly if the spherical cross-section of $\Omega_\eps$ is a spherical polygon admitting an inscribed circle.

In the present work we complement the above results by computing the asymptotics of \emph{individual} eigenvalues of $Q_\eps$ for small $\eps$ in arbitrary dimensions and for arbitrary cross-sections $\omega$.
It turns out that the main term in the asymptotics depends on a single geometric constant $N_\omega$ given in \eqref{asymp} and, hence, it is rather insensitive to the regularity of $\omega$. Our result reads as follows:

\begin{theorem}\label{thm1}
Let $j\in \N$, then $Q_\eps$ has at least $j$ discrete eigenvalues below the bottom of the essential spectrum for all sufficiently small $\eps>0$, and its $j$th eigenvalue $E_j(Q_\eps)$  satisfies
	\begin{equation}
		  \label{asymp}
	E_j(Q_\eps)=-\frac{N_\omega^2}{(2j+n-2)^2\,\eps^2}+O\left(\frac{1}{\eps}\right) \text{ as } \eps\to 0^+,
	\quad 	N_\omega:=\dfrac{\vol_{n-1} \partial\omega}{\vol_n \omega}.
	\end{equation}
\end{theorem}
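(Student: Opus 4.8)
The plan is to reduce the problem to a one-dimensional effective operator on the half-line by exploiting the conical geometry. First I would pass to polar-type coordinates adapted to the cone: write $x=(x_1,x')$ with $x_1=r\cos$-like radial variable and $x'=r\eps x_1\theta$ for $\theta\in\omega$, or more directly use the dilation-invariance to introduce $(r,\theta)\in(0,\infty)\times\omega$ via $x_1=r$, $x'=\eps r\theta$. In these coordinates the metric, the Dirichlet form, and the boundary term all pick up explicit powers of $r$ and $\eps$, and the form $q_\eps$ becomes, up to lower-order cross terms, a sum of a radial part and a transverse part on $\omega$ scaled by $1/(\eps r)$. The key heuristic is that for eigenvalues of order $\eps^{-2}$ the eigenfunctions concentrate near the boundary $\partial\omega$ at a length scale $\sim\eps r$, so after rescaling the transverse direction the relevant transverse operator is a half-line Robin Laplacian $-\partial_t^2$ with $\partial_t u=u$ at $t=0$, whose ground state is $-1$ with eigenfunction $e^{-t}$; crucially, the \emph{weight} coming from the volume element near $\partial\omega$ is (to leading order) $\vol_{n-1}\partial\omega\cdot(\text{flat measure in }t)$, while the normalization constant involves $\vol_n\omega$, and this is exactly where $N_\omega=\vol_{n-1}\partial\omega/\vol_n\omega$ enters.

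Second, having separated variables approximately, I would be left with a one-dimensional model: minimizing, over the radial variable $r$, a functional of the form $\int_0^\infty\big(|f'(r)|^2 r^{n}-\frac{N_\omega}{\eps}f(r)^2 r^{n-1}\big)\dd r$ against $\int_0^\infty f(r)^2 r^{n}\dd r$ (the powers of $r$ reflecting the $n$-dimensional cross-section and the $(n-1)$-dimensional boundary, and the factor $\eps^{-1}$ from the collapse of the cross-section). This is a Bessel-type operator; by the substitution $f(r)=r^{-(n-1)/2}g(r)$ or by recognizing it as the radial part of a Laplacian with an inverse-square plus Coulomb-type potential, its negative eigenvalues are computable explicitly and are of hydrogen-atom type: $E_j=-c^2/(2j+n-2)^2$ with $c=N_\omega/(2\eps)$... more precisely one finds $E_j=-N_\omega^2/\big((2j+n-2)^2\eps^2\big)$, matching the claimed leading term. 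The shift $n-2$ in the denominator is precisely the angular-momentum/dimensional parameter of the Bessel operator in $n+1$ ambient dimensions.

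Third, to make this rigorous I would prove matching upper and lower bounds via the min-max principle. For the \emph{upper bound} on $E_j(Q_\eps)$ I would construct an explicit $j$-dimensional trial subspace: take $j$ functions of the product form $f_k(r)\,e^{-\mathrm{dist}(x',\partial(\eps r\omega))/(\text{scale})}$ (or a smooth surrogate), where the $f_k$ are the first $j$ eigenfunctions of the 1D Bessel model above, check that these lie in $H^1(\Omega_\eps)$, and estimate $q_\eps$ on this subspace, showing the error terms (from curvature of $\partial\omega$, from the $r$-dependence of the transverse scale, from cross terms) are $O(1/\eps)$ relative to the $O(1/\eps^2)$ main term. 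For the \emph{lower bound} I would use a bracketing argument: decompose $(0,\infty)$ into radial annuli (or use a partition of unity in $r$), on each piece bound the transverse Robin form from below by $N_\omega/(\eps r)$ times the $L^2$-norm up to a controlled error using a uniform Robin trace inequality on $\omega$ (here the regularity theory of Sobolev spaces on the cone from the appendices is needed to justify the trace estimates and the IMS-type localization without boundary loss), and then invoke the spectral analysis of the resulting 1D operator.

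The main obstacle I anticipate is controlling the transverse Robin form on $\omega$ \emph{uniformly} and with the sharp constant $N_\omega$: one needs an inequality of the type $\alpha\int_{\partial\omega}|v|^2\dd\sigma\le \delta\int_\omega|\nabla v|^2+ (N_\omega\alpha/ \delta \cdot\text{something})\int_\omega|v|^2$ with the \emph{optimal} coefficient as $\delta\to0$ capturing $N_\omega$ rather than some cruder geometric quantity — a Lipschitz cross-section has no curvature bounds, so one cannot use tubular-neighborhood coordinates globally and must instead argue by a covering/partition-of-unity on $\partial\omega$ together with a careful accounting of how the $(n-1)$- and $n$-dimensional volumes combine, plus a separate argument that the contribution away from $\partial\omega$ is negligible at the relevant scale. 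A secondary difficulty is handling the singularity at the cone tip $r=0$ and the unboundedness at $r=\infty$ simultaneously in the 1D reduction, ensuring the Bessel operator is realized with the correct self-adjoint boundary behavior so that its eigenvalue count matches $E_j(Q_\eps)$ exactly; this is where the form-domain identification $D(q_\eps)=H^1(\Omega_\eps)$ and the Sobolev-space results quoted from the appendices do the essential work.
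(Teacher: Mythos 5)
Your overall skeleton is right and matches the paper's: pass to $(s,t)$ coordinates via $x_1=s$, $x'=\eps s t$, reduce to an effective one-dimensional operator with the potential $\frac{n^2-2n}{4s^2}-\frac{N_\omega}{\eps s}$ (whose hydrogen-atom eigenvalues $-N_\omega^2/((2j+n-2)^2\eps^2)$ give the leading term), and prove matching upper and lower bounds by the min-max principle together with an IMS localization in $s$ and the Sobolev/trace facts from the appendices. However, there is a genuine conceptual error in the mechanism you propose for the appearance of $N_\omega$, and it would derail the rigorous bounds if carried out as stated.

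You argue that, at the relevant energy scale, the eigenfunctions develop a boundary layer of thickness $\sim\eps s$ near $\partial\omega$, so that after rescaling the transverse problem becomes the strong-coupling half-line Robin operator $-\partial_t^2$ with ground state $e^{-t}$, and $N_\omega$ emerges as a ratio of boundary and bulk measures from this concentration. This is backwards. In the rescaled coordinates the transverse problem is the Robin Laplacian $B_r$ on $\omega$ with Robin parameter $r=\eps\rho(s,\eps)\sim\eps s\to 0$ (equivalently: a fixed Robin parameter on a shrinking cross-section $\eps s\,\omega$, whose size is far below the would-be boundary-layer scale). This is the \emph{weak-coupling} regime: the transverse ground state is close to the Neumann ground state, i.e.\ close to the constant $1/\sqrt{\vol_n\omega}$, and shows no boundary concentration at all. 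The constant $N_\omega$ enters through first-order perturbation theory for $B_r$ at $r=0$, namely
\[
E_1(B_r)=-N_\omega\,r+O(r^2),\qquad r\to 0^+,\qquad
\text{with }\ -\frac{d}{dr}E_1(B_r)\Big|_{r=0}=\frac{\int_{\partial\omega}\psi_0^2\,\dd\tau}{\int_\omega\psi_0^2\,\dd t}=\frac{\vol_{n-1}\partial\omega}{\vol_n\omega}=N_\omega,
\]
where $\psi_0$ is the (constant) Neumann ground state. Plugging $r=\eps s$ into $(\eps s)^{-2}E_1(B_{\eps s})$ then produces the Coulomb potential $-N_\omega/(\eps s)$ plus an $O(1)$ error, which is exactly what the proof needs.

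This error propagates into your concrete constructions. For the upper bound, the trial functions $f_k(r)\,e^{-\mathrm{dist}(x',\partial(\eps r\omega))/\text{scale}}$ are built on the boundary-layer picture; with any scale comparable to $\eps r$ they would badly miss the Rayleigh quotient, and with a scale $\gg\eps r$ they degenerate to essentially constants, i.e.\ one is tacitly using the Neumann picture anyway. The clean choice (and the one the paper uses) is $f(s)\,\psi_{\eps\rho(s,\eps)}(t)$ with $\psi_r$ the positive normalized transverse ground state; the smoothness in $r$ and the bound $\|\partial_r\psi_r\|_{L^2(\omega)}\le K$ uniformly for small $r$ then control all cross terms at order $O(\eps^2)$. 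For the lower bound, what is needed is not a sharp trace inequality with optimal constant as $\delta\to0$ (that would be a large-parameter, boundary-layer statement), but the uniform small-parameter lower bound $E_1(B_r)\ge -N_\omega r-Cr^2$ for all $r>0$, applied together with an orthogonal decomposition $u=v+w$ into the $\psi_{\eps\rho}$-component and its complement, using a positive lower bound on $E_2(B_r)$ (which tends to the second Neumann eigenvalue) to absorb the $w$-part. So the difficulty you flag about Lipschitz $\partial\omega$ lacking curvature bounds does not actually arise: no tubular neighborhood or boundary-layer coordinates are needed, only analytic perturbation theory for $B_r$ near $r=0$ on the fixed Lipschitz domain $\omega$.

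Two smaller remarks. First, the paper does not work directly on all of $\Omega_\eps$: it first truncates to $V_\eps=\Omega_\eps\cap\{x_1<a\}$ with a Dirichlet condition at $x_1=a$, analyzes the truncated operator $T_\eps$ completely, and then transfers the estimates back to $Q_\eps$ by an IMS bracketing together with a crude $O(1/\eps)$ lower bound on the far part of the cone; your radial annuli idea is the right spirit but you should make this two-step structure explicit. Second, the Friedrichs realization of the effective Bessel--Coulomb operator is indeed the correct one; the inverse-square coefficient $\frac{n^2-2n}{4}\ge 0$ for $n\ge 2$ keeps you in the limit-point regime at $0$, and the eigenvalue formula you quote is for exactly this realization, so your worry there is easy to dispose of.
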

For $n=1$ and $\omega=(-1,1)$ one has $N_\omega=1$, and the result follows directly from \eqref{ej2d},
and all other intervals $\omega$ are easily included by applying suitable reparametrizations.
Hence, \emph{for the rest of the text	we explicitly assume $n\ge 2$}. Remark that if $\omega$ is a unit ball centered at the origin, then one has $N_\omega=n$, and the exact formula \eqref{eball}
has the form \eqref{asymp} with $j=1$ and a more accurate remainder estimate. Based on these observations
one may expect that the remainder estimate in \eqref{asymp} is not optimal.
We further remark that if the volume $\vol_n \omega$ or the surface area $\vol_{n-1} \partial\omega$ is fixed, then
the quantity $N_\omega$ is minimized by the ball due to the classical isoperimetric inequality.
Hence, the sharp cones $\Omega_\eps$ whose cross-section $\omega$ are balls maximize the main  term
in \eqref{asymp} among all sharp cones with cross-sections of the same volume or
surface area.

Our proof is variational and based on the min-max principle, and its main ingredient is a kind of asymptotic separation of the variables $x_1$ and $x'$, which is quite similar to \cite{kp18}, but the analysis in the $x'$-direction is much more involved and uses some coordinate transforms similar to~\cite{kov}. Various proof steps are explained in greater detail in Subsection~\ref{sec-iso peak} below.
We intentionally opted for the min-max approach as it allows for an elementary and self-containing proof of the main results concerning the eigenvalues. It should be noted that other more advanced approaches should be applied if one needs more information on eigenfunctions, for example, the method of matched asymptotic expansions \cite{mnp}. Nevertheless, in view of the expected amount of additional
technical work we believe that such an analysis should be the subject of separate study. We also remark that in the appendix we prove some results on Sobolev spaces on $\Omega_\eps$ (which is unbounded and may be non-Lipschitz) that are needed for the spectral analysis: this part of the text may be of its own interest.

\section{Preparations for the proof}  \label{sec-prelim}

\subsection{Min-max principle}\label{ssmm}

If $T$ is a lower semibounded, self-adjoint operator in an infinite-dimensional Hilbert space $\cH$. The spectrum and the essential spectrum of $T$ will be denoted by $\spec T$ and $\spec_\ess T$ respectively. Furthermore, denote $\Sigma:=\inf\spec_\ess T$ for $\spec_\ess T\ne \emptyset$ and $\Sigma:=+\infty$ otherwise. If $T$ has at least $j$ eigenvalues (counting multiplicities) in $(-\infty,\Sigma)$, then we denote by $E_j(T)$ its $j$th eigenvalue (when enumerated in the non-decreasing order and counted according to the multiplicities).  All operators we consider are real (i.e. map real-valued functions to real-valued functions), and we prefer to work with real Hilbert spaces in order to have shorter expressions. 

Let $t$ be the bilinear form for $T$, with domain $D(t)$, and let $D\subset D(t)$ be any dense subset (with respect to the scalar product induced by
$t$).
Consider the following ``variational eigenvalues''
\begin{equation*}
	\Lambda_j(T):=\inf_{\substack{V\subset D\\ \dim V=j}}\sup_{\substack{u\in V \\ u\neq 0}}\frac{t(u,u)}{\langle u,u\rangle_{\cH}}, 
\end{equation*}
which are independent of the choice of $D$. One easily sees that $j\mapsto \Lambda_j(T)$ is non-decreasing, and it is known \cite[Section XIII.1]{RS4} that only two cases are possible:
\begin{itemize}
	\item For all $j\in\mathbb{N}$ there holds $\Lambda_{j}(T)<\Sigma$. Then the spectrum of $T$ in $(-\infty,\Sigma)$ consists of infinitely
	many discrete eigenvalues $E_j(T)\equiv \Lambda_j(T)$ with $j\in\NN$.
	
	\item For some $N\in\NN\cup\{0\}$ there holds $\Lambda_{N+1}(T)\geq\Sigma$, while $\Lambda_j(T)<\Sigma$ for all $j\le N$.
	Then $T$ has exactly $N$ discrete eigenvalues in $(-\infty,\Sigma)$ and $E_j(T)=\Lambda_{j}(T)$ for $j\in\{1,\dots,N\}$, while $\Lambda_{j}(T)=\Sigma$ for all $j\geq N+1$.
\end{itemize}
In all cases there holds $\lim_{j\to\infty}\Lambda_j(T)=\Sigma$, and if for some $j\in\NN$ one has $\Lambda_{j}(T)<\Sigma$, then $E_j(T)=\Lambda_j(T)$.
In particular, if for some $j\in \NN$ one has the strict inequality $\Lambda_j(T)<\Lambda_{j+1}(T)$, then $E_j(T)=\Lambda_j(T)$.

\subsection{Robin Laplacian on $\omega$} 
\label{sec-1d}

Given $r\in\R$ denote by $B_{r}$ the self-adjoint operator in $L^2(\omega)$ generated by the closed symmetric bilinear form 
\begin{equation} \label{d-form} 
	b_{r}(f,f) =\int_{\omega} |\nabla f(t)|^2\, \dd t   -  r \int_{\partial\omega} f(t)^2\, \dd\tau(t),  \qquad  f\in H^1(\omega);
\end{equation} 
remark that $b_r$ is semibounded from below due to Proposition~\ref{prop-sobolev}.
Informally, the operator $B_r$  is the Laplacian $f\mapsto -\Delta f$ on $\omega$
with the Robin boundary condition $\partial_\nu f = r f$, with $\partial_\nu$ being the outward normal derivative. We will summarize some important spectral properties of $B_r$ as follows.

\begin{lemma} \label{lem-1}
	
	The following assertions hold true:
	
	\begin{enumerate}
		\item[(a)] For any $r\in\R$ the first eigenvalue $E_1(B_r)$ is simple, and the corresponding eigenfunction $\psi_r$ can be chosen strictly positive with $\|\psi_r\|_{L^2(\omega)}=1$.
	
		\item[(b)] The mappings $\R\ni r\mapsto E_1(B_r)\in \R$ and
		 $\R\ni r\mapsto \psi_r\in L^2(\omega)$ are $C^\infty$.

		\item[(c)] There exists $\varphi\in L^\infty(0,\infty)$ such that 
		$E_1(B_r) = -N_\omega r +r^2 \varphi(r)$ for all $r>0$ and $N_\omega$ as defined in \eqref{asymp}.

		\item[(d)] Let $E_2^N>0$ be the second eigenvalue of the Neumann Laplacian on $\omega$, then
		$\lim_{r\to 0}E_2(B_r)  = E_2^N$.

		\item[(e)] For any $r_0>0$ there exists $K>0$ such that 
		\begin{equation} \label{eq-kp}
			\int_{\omega} \big|\partial_r \psi_ r(y)\big|^2\dd y  \leq K \text{ for all  $r \in (0,r_0)$.}
		\end{equation} 
	\end{enumerate}	
\end{lemma}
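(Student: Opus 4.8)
The plan is to prove the five assertions in turn, using throughout that the embedding $H^1(\omega)\hookrightarrow L^2(\omega)$ is compact (as $\omega$ is bounded with Lipschitz boundary), so that $B_r$ has compact resolvent and purely discrete spectrum for every $r\in\R$ and $E_j(B_r)=\Lambda_j(B_r)$ for all $j\in\N$. For (a), I would first note that a ground state $\psi\in H^1(\omega)$ of $B_r$ exists and weakly solves $-\Delta\psi=E_1(B_r)\psi$ in $\omega$, and then observe that $\bigl|\nabla|\psi|\bigr|=|\nabla\psi|$ a.e.\ and that the trace of $|\psi|$ is the modulus of the trace of $\psi$; hence $|\psi|$ has the same Rayleigh quotient and is again a nonnegative ground state, and the interior Harnack inequality for $-\Delta v=E_1(B_r)v$ together with connectedness of $\omega$ forces $|\psi|>0$ in $\omega$. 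Thus every nonzero ground state has a strict sign, which in turn gives simplicity of $E_1(B_r)$ (if $\psi_1,\psi_2$ were two linearly independent ground states, then for $x_0\in\omega$ the nonzero ground state $\psi_2(x_0)\psi_1-\psi_1(x_0)\psi_2$ would vanish at $x_0$, contradicting the strict sign), and fixing the sign and the normalization $\|\cdot\|_{L^2(\omega)}=1$ singles out $\psi_r$.

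For (b), I would invoke analytic perturbation theory: the forms $b_r$, $r\in\R$, constitute a holomorphic family of type (a) in Kato's sense — common domain $H^1(\omega)$, the map $r\mapsto b_r(f,f)$ affine, and the boundary term of $b_0$-form bound $0$ by Proposition~\ref{prop-sobolev}. Since $E_1(B_r)$ is simple by (a) and, being the bottom of a discrete spectrum, isolated, the eigenvalue $r\mapsto E_1(B_r)$ and the associated Riesz eigenprojection $P_r$ are real-analytic on $\R$; writing $\psi_r=P_r\psi_{r_0}/\|P_r\psi_{r_0}\|_{L^2(\omega)}$ locally and fixing the global sign, the map $r\mapsto\psi_r\in L^2(\omega)$ is real-analytic, in particular $C^\infty$.

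For (c), I would use $E_1(B_0)=0$ with $\psi_0\equiv(\vol_n\omega)^{-1/2}$ and the Hellmann--Feynman formula $\partial_r E_1(B_r)=-\int_{\partial\omega}\psi_r^2\dd\tau$ (legitimate by (b)), which at $r=0$ equals $-\vol_{n-1}\partial\omega/\vol_n\omega=-N_\omega$; Taylor's formula with integral remainder for the smooth function $r\mapsto E_1(B_r)$ then shows that $\varphi(r):=\bigl(E_1(B_r)+N_\omega r\bigr)/r^2$ stays bounded as $r\to0^+$. For $r$ bounded away from $0$, testing $b_r$ on the constant function gives $E_1(B_r)\le-N_\omega r$, so $\varphi\le0$, whereas the quantitative trace inequality $\int_{\partial\omega}f^2\dd\tau\le\delta\|\nabla f\|_{L^2(\omega)}^2+C\delta^{-1}\|f\|_{L^2(\omega)}^2$ applied with $\delta:=1/(2r)$ yields $b_r(f,f)\ge-2Cr^2\|f\|_{L^2(\omega)}^2$, hence $E_1(B_r)\ge-2Cr^2$ and $\varphi\ge-2C$; together this gives $\varphi\in L^\infty(0,\infty)$. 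I expect this last step — the lower bound $E_1(B_r)\ge-2Cr^2$ on a merely Lipschitz cross-section, which relies on the correct $\delta^{-1}$-scaling of the trace constant — to be the only genuinely quantitative ingredient, the remaining steps being routine compact-resolvent spectral theory, Perron--Frobenius positivity and analytic perturbation theory.

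For (d), the inequality $b_r\le b_0$ on $H^1(\omega)$ gives $\Lambda_2(B_r)\le\Lambda_2(B_0)=E_2^N$, while the same trace inequality with a fixed $\delta>0$ gives, through the min-max characterization, $\Lambda_2(B_r)\ge(1-r\delta)E_2^N-C\delta^{-1}r$; letting $r\to0^+$ forces $E_2(B_r)=\Lambda_2(B_r)\to E_2^N$. Finally, (e) follows from (b): since $r\mapsto\psi_r\in L^2(\omega)$ is $C^1$, the function $r\mapsto\|\partial_r\psi_r\|_{L^2(\omega)}$ is continuous on the compact interval $[0,r_0]$, hence bounded there, which is exactly \eqref{eq-kp}.
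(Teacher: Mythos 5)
Your proof is correct and covers all five parts; the overall architecture (Perron--Frobenius positivity for (a), type-(a)/(B) analytic perturbation theory for (b) and (e), quantitative trace inequality plus derivative at the origin for (c), a squeeze between $b_r$ and the Neumann form for (d)) coincides with the paper's. The difference is that you prove self-containedly what the paper delegates to references: part (a) is cited from~\cite{ateg}, the value $\frac{d}{dr}E_1(B_r)|_{r=0}=-N_\omega$ is cited from Eq.~(4.16) of~\cite{bfk} (you derive it by Feynman--Hellmann), the lower bound $E_1(B_r)\ge -Cr^2$ is cited from~\cite[Cor.~2.2]{kov} (you derive it from the trace inequality $\int_{\partial\omega}f^2\le\delta\|\nabla f\|_{L^2(\omega)}^2+C\delta^{-1}\|f\|_{L^2(\omega)}^2$ with $\delta\sim 1/r$, which is indeed the content of that corollary), and the paper deduces~(d) from resolvent continuity of the analytic family whereas you give a direct min-max pinch. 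One small redundancy worth noting: since you test with the constant function to get the sharper upper bound $E_1(B_r)\le -N_\omega r$ (rather than the paper's $E_1(B_r)\le 0$), the two global bounds $-2Cr^2\le E_1(B_r)\le -N_\omega r$ already give $-2C\le\varphi(r)\le 0$ for \emph{all} $r>0$, so the Taylor-with-integral-remainder step near $r=0$ is not actually needed in your version; the paper, by contrast, does need the local expansion because its coarser upper bound does not control $\varphi$ near $0$. Everything you write is sound — the Harnack argument for strict positivity, the vanishing-linear-combination argument for simplicity, the identification $\Lambda_2(B_0)=E_2^N$, the $C^1$-hence-bounded argument for (e) — and you correctly flag the $\delta^{-1}$ scaling of the Lipschitz trace constant as the one nontrivial quantitative input.
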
 

\begin{proof}
	Part (a) is proved for even more general Robin problems in~\cite[Sec.~4.2]{ateg}. Both  (b) and (d) follow from the fact that the operators $B_r$ form a type (B) analytic family with respect to $r$, see \cite[Ch.~7, \S 4]{kato}, and
	(e) is a direct consequence of (b). To prove (c) we remark first that there exists $C>0$ such that
	\begin{equation}
		 \label{eq-c1}
	-Cr^2\le E_1(B_r)\le 0 \text{ as $r\to+\infty$};
	\end{equation}
the lower bound is proved e.g. in~\cite[Corol.~2.2]{kov}, and the upper bound follows from $b_r(1,1)<0$ (which holds for all $r>0$) by the min-max principle. Furthermore, by Eq.~(4.16) in \cite{bfk} one has
\[
\dfrac{d}{dr}E_1(B_r)\big|_{r=0}=-N_\omega,
\]
and it follows that $E_1(B_r)=-N_\omega r+O(r^2)$ as $r\to 0^+$. By combining this asymptotics with \eqref{eq-c1} we arrive at the representation in (c).
\end{proof}

\subsection{One-dimensional model operators}\label{compop}
For a fixed $\lambda>0$  consider the symmetric differential operator in $L^2(0,\infty)$ given by
\begin{equation}
	C_c^\infty(0,\infty) \ni f \mapsto -f'' + \bigg(\frac{n^2-2n}{4s^2}-\frac{N_\omega}{\lambda s}\bigg) f
\end{equation}
and denote by $A_{\lambda}$ its Friedrichs extension. Note that $n^2-2n\ge 0$ due to $n\ge 2$.

In \cite[Chapter~8.3]{gitman} the spectrum of $A_\lambda$ was fully determined\footnote{For $n=2$ see p.~312 and for $n\geq 3$ see p.~294 in \cite{gitman}.}: the essential spectrum is $[0,+\infty)$ and the negative eigenvalues are simple and
are explicitly given by
\begin{equation}\label{1d EV}
	E_j(A_\lambda)=\dfrac{E_j( A_{1})}{\lambda^2}=-\frac{N_\omega^2}{(2j+n-2)^2\lambda^2},\qquad j\in\N,\quad \lambda>0.
\end{equation} 

In what follows we will need to work with truncated versions of $A_\lambda$. Namely, for $b>0$
we denote by $M_{\lambda,b}$ and $\Tilde M_{\lambda,b}$ the Friedrichs extensions in $L^2(0,b)$ 
and $L^2(b,\infty)$ of the operators $C_c^\infty(0,b)\ni f\mapsto A_\lambda f$ and $C_c^\infty(b,\infty)\ni f\mapsto A_\lambda f$ respectively. 

Note that, by construction, the form domain of $M_{\lambda,b}$ is contained in $H^1_0(0,b)$, which implies that
$M_{\lambda,b}$ has compact resolvent. We need to relate the eigenvalues of $M_{\lambda,b}$ to those of $A_\lambda$.
As the bilinear form of $A_\lambda$ extends that of $M_{\lambda,b}$, one has, due to the min-max principle,
\begin{equation}
	\label{eq-1dd}
	E_j(M_{\lambda,b})\ge E_j(A_\lambda) \text{ for any $b>0$, $\lambda>0$, $j\in\N$.}
\end{equation}
Let us now obtain an asymptotic upper bound for $E_j(M_{\lambda,b})$.

\begin{lemma}\label{lem7}
	Let $b>0$ and $j\in\NN$. Then there exist $K>0$ and $\eps_0>0$ such that
	\[
	E_j(M_{\eps,b}) \le E_j( A_\eps) +K \text{ for all } \eps\in(0,\eps_0).
	\]
\end{lemma}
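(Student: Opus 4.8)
The plan is to compare $M_{\eps,b}$ with the untruncated operator $A_\eps$ by constructing, for each $j$, a $j$-dimensional trial space inside the form domain of $M_{\eps,b}$ (i.e. inside $H^1_0(0,b)$) whose Rayleigh quotients for the quadratic form of $M_{\eps,b}$ do not exceed those for $A_\eps$ up to an additive error of order $1$, uniformly in small $\eps$. Since $E_j(A_\eps)=-N_\omega^2/((2j+n-2)^2\eps^2)$ is of order $\eps^{-2}$, an additive constant $K$ is a genuinely weaker (and hence achievable) perturbation, which is what makes the statement plausible.

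First I would recall the explicit structure of the negative eigenfunctions of $A_\lambda$ from \cite[Chapter~8.3]{gitman}: they are built from confluent hypergeometric / Whittaker functions in the variable $s$, with an exponential decay $\sim e^{-c_j s/\lambda}$ at infinity (where $c_j>0$ depends on $j$) and the regular power behavior $s^{(n-1)/2+?}$... more precisely $s^{\,n/2}$-type behavior near $0$ dictated by the Friedrichs extension. Denote by $g_{1,\lambda},\dots,g_{j,\lambda}$ the first $j$ of these (orthonormal) eigenfunctions. The key scaling observation is $g_{i,\lambda}(s)=\lambda^{-1/2}g_{i,1}(s/\lambda)$, so that as $\eps\to0$ these functions concentrate near the origin at scale $\eps$ and are exponentially small at the fixed point $b$. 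I would then fix a smooth cutoff $\chi\in C^\infty(0,\infty)$ with $\chi=1$ on $(0,b/2)$ and $\chi=0$ on $(b,\infty)$, and take the trial space $V:=\mathrm{span}\{\chi g_{1,\eps},\dots,\chi g_{j,\eps}\}\subset H^1_0(0,b)$.

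The main estimate is then a standard localization/IMS-type computation: for $f=\sum c_i g_{i,\eps}$ one writes
\[
m_{\eps,b}(\chi f,\chi f)=\int_0^b\Big(|(\chi f)'|^2+\big(\tfrac{n^2-2n}{4s^2}-\tfrac{N_\omega}{\eps s}\big)|\chi f|^2\Big)\dd s
= a_\eps(f,f)-\int |\chi'|^2|f|^2\dd s + \text{(terms supported on $[b/2,b]$)},
\]
where $a_\eps$ is the form of $A_\eps$, and one uses that $\chi'$ is supported on $[b/2,b]$ where $f$ and $f'$ are exponentially small in $\eps$; likewise $\|\chi f\|^2=\|f\|^2+O(e^{-c/\eps})$ and the potential term on $[b/2,b]$ (where it is bounded, the singular part being away) contributes $O(e^{-c/\eps})$. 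Hence $m_{\eps,b}(\chi f,\chi f)\le a_\eps(f,f)+O(e^{-c/\eps})\|f\|^2 \le \big(E_j(A_\eps)+1\big)\|\chi f\|^2$ for $\eps$ small, and the min-max principle (Subsection~\ref{ssmm}) gives $E_j(M_{\eps,b})\le E_j(A_\eps)+1$, so $K=1$ works. The only point requiring care — and the step I expect to be the main obstacle — is making the exponential decay of $g_{i,\eps}$ and $g_{i,\eps}'$ at the fixed radius $b$ explicit and uniform in $i\le j$: this needs either a direct estimate on the Whittaker functions or, more cheaply, an Agmon-type bound for the eigenfunctions of $A_\eps$ on $(b/2,\infty)$, using that the potential there is bounded below by $-C/\eps$ while the eigenvalue is $\le -c/\eps^2$, which yields decay $e^{-c'(s-b/2)/\eps}$ for $s\ge b/2$ — comfortably enough to absorb everything into an $O(1)$ remainder.
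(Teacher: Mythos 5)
Your approach is correct in outline but genuinely different from the paper's. You prove the upper bound on $E_j(M_{\eps,b})$ \emph{directly}, by constructing trial functions: you cut off the first $j$ exact eigenfunctions of $A_\eps$ at the fixed radius $b$ and show that, because they concentrate near the origin at scale $\eps$, the cutoff errors (the $|\chi'|^2$-term, the potential on $[b/2,b]$, the norm deficit) are all $O(e^{-c/\eps})$ and hence swallowed by any additive constant $K$. This requires explicit knowledge of the eigenfunctions of $A_\eps$ (or at least an Agmon estimate giving exponential decay of both $g_{i,\eps}$ and $g_{i,\eps}'$ on $[b/2,\infty)$ at rate $O(1/\eps)$), which is the main extra cost of your route. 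The paper instead works ``from above'': it applies the IMS identity to $A_\eps$ with a pair $\chi_1,\chi_2$, $\chi_1^2+\chi_2^2=1$, obtaining $A_\eps+K\ge M_{\eps,b}\oplus\Tilde M_{\eps,b/4}$ in the form sense with $K=\|\chi_1'\|_\infty^2+\|\chi_2'\|_\infty^2$, so that $E_j(A_\eps)+K\ge\min\{E_j(M_{\eps,b}),\inf\spec\Tilde M_{\eps,b/4}\}$. It then shows with a one-line estimate (drop the kinetic term, bound $1/s\le 4/b$) that $\inf\spec\Tilde M_{\eps,b/4}\ge -4N_\omega/(b\eps)$, which is eventually above $E_j(A_\eps)+K\sim -c/\eps^2$, so the minimum is $E_j(M_{\eps,b})$. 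The paper's argument is thus entirely operator-theoretic and needs no information about eigenfunctions at all; your argument is more in the quasimode spirit and, while it gives (implicitly) the stronger conclusion $E_j(M_{\eps,b})\le E_j(A_\eps)+O(e^{-c/\eps})$, it trades that extra precision (which is not needed here, since the final remainder in Theorem~\ref{thm1} is only $O(1/\eps)$) against the need to establish Agmon-type decay for the Whittaker eigenfunctions and their derivatives uniformly for $\eps\to0^+$. Both are valid; the paper's is shorter and self-contained given the tools already in Subsection~\ref{compop}.

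One small point you should make explicit if you pursue your route: you need $\chi g_{i,\eps}$ to lie in the form domain of $M_{\eps,b}$ (the Friedrichs closure of $C_c^\infty(0,b)$), not just in $H^1_0(0,b)$. This holds because $g_{i,\eps}$ is in the Friedrichs form domain of $A_\eps$, hence approximable by $C_c^\infty(0,\infty)$ functions $h_k$ in form norm, and then $\chi h_k\in C_c^\infty(0,b)$ approximates $\chi g_{i,\eps}$; but it deserves a sentence.
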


\begin{proof}
	The proof is quite standard and uses a so-called IMS partition of unity \cite[Sec.~3.1]{bsim}.
	Let $\chi_1$ and $\chi_2$ be two smooth functions on $\R$ with $0\le\chi_1,\chi_2\le 1$, such that
	$\chi_1^2+\chi_2^2=1$, $\chi_1(s)=0$ for $s>\frac{3}{4}\, b$, $\chi_2(s)=0$ for $s<\frac{1}{2}\, b$.
	We set $K:=\|\chi_1'\|_\infty^2+\|\chi_2'\|_\infty^2$.
	An easy computation shows that for any $f\in C^\infty_c(0,\infty)$ there holds
	\begin{align*}
		\int_0^\infty |f'|^2 \dd s & =\int_0^\infty \big|(\chi_1 f)'\big|^2 \dd s +\int_0^\infty \big|(\chi_2 f)'\big|^2 \dd s
		-\int_0^\infty \Big(|\chi_1'|^2+|\chi_2'|^2\Big) f^2\, \dd s\\
		& \ge \int_0^\infty \big|(\chi_1 f)'\big|^2 \dd s +\int_0^\infty \big|(\chi_2 f)'\big|^2 \dd s
		- K \|f\|^2_{L^2(0,\infty)},
	\end{align*}
	which implies
	\begin{align*}
		\big\langle f, A_{\eps} f\big\rangle_{L^2(0,\infty)}+K \|f\|^2_{L^2(0,\infty)}&\ge \big\langle \chi_1 f, A_{\eps}(\chi_1 f)\big\rangle_{L^2(0,\infty)}+\big\langle 
		\chi_2 f, A_{ \eps}(\chi_2 f)\big\rangle_{L^2(0,\infty)}\\
		&\equiv \big\langle \chi_1 f, A_{\eps}(\chi_1 f)\big\rangle_{L^2(0,b)}+\big\langle 
		\chi_2 f, A_{ \eps}(\chi_2 f)\big\rangle_{L^2(\frac{b}{4},\infty)}.
	\end{align*}
	Using the identity $\|f\|^2_{L^2(0,\infty)}=\|\chi_1 f\|^2_{L^2(0,b)}+\|\chi_2 f\|^2_{L^2(\frac{b}{4},\infty)}$
	and the obvious inclusions $\chi_1 f\in C^\infty_c(0,b)$, $\chi_2 f\in C^\infty_c(\frac{b}{4},\infty)$,
	we apply the min-max principle as follows:
	\begin{equation}
		\label{eq-ineq00}
		\begin{aligned}
			E_j(A_{\eps})+K &= \inf_{\substack{S\subset C^\infty_c(0,\infty)\\ \dim S=j}} \sup_{\substack{f\in S\\ f\ne 0}} \dfrac{\langle f, A_\eps f\rangle+K\|f\|^2_{L^2(0,\infty)}}{\|f\|^2_{L^2(0,\infty)}}\\
			& \ge \inf_{\substack{S\subset C^\infty_c(0,\infty)\\ \dim S=j}} \sup_{\substack{f\in S\\ f\ne 0}} \dfrac{\big\langle \chi_1 f, A_{\eps}(\chi_1 f)\big\rangle_{L^2(0,b)}+\big\langle \chi_2 f, A_{\eps}(\chi_2 f)\big\rangle_{L^2(\frac{b}{4},\infty)}}{\|f\|^2_{L^2(0,\infty)}}\\
			& = \inf_{\substack{S\subset C^\infty_c(0,\infty)\\\dim S=j}} \sup_{\substack{f\in S \\ f\ne 0}} \dfrac{\big\langle \chi_1 f, A_{\eps}(\chi_1 f)\big\rangle_{L^2(0,b)}+\big\langle \chi_2 f, A_{\eps}(\chi_2 f)\big\rangle_{L^2(\frac{b}{4},\infty)}}{\|\chi_1 f\|^2_{L^2(0,b)}+\|\chi_2 f\|^2_{L^2(\frac{b}{4},\infty)}} \\
			& \ge \inf_{\substack{S\subset C^\infty_c(0,b)\oplus C^\infty_c(\frac{b}{4},\infty)\\ \dim S=j}}\,  \sup_{\substack{(f_1,f_2) \in S\\ (f_1,f_2)\ne 0}} \dfrac{\big\langle f_1 , A_{\eps}\, f_1\big\rangle_{L^2(0,b)}
				+\big\langle f_2 , A_{\eps}\, f_2\big\rangle_{L^2(\frac{b}{4},\infty)}}{\|f_1\|^2_{L^2(0,b)}+\|f_2\|^2_{L^2(\frac{b}{4},\infty)}}\\
			& =\Lambda_j\big(M_{\eps,b}  \oplus \Tilde M_{\eps, \frac{b}{4}}\big)\ge \min\big\{\Lambda_j(M_{\eps,b}),\inf\spec \Tilde M_{\eps, \frac{b}{4}}\big\}.
		\end{aligned}
	\end{equation}
For any $j\in\NN$ we have $\Lambda_j(M_{\eps,b})=E_j(M_{\eps,b})$. At the same time, for any function $f\in C_c^\infty(\frac{b}{4},\infty)$ one has
	\begin{align*}
		\langle f, \Tilde M_{\eps, \frac{b}{4}} f\rangle_{L^2(\frac{b}{4},\infty)}&=\int_{\frac{b}{4}}^\infty\Big[|f'|^2+
		\Big(\frac{n^2-2n}{4s^2}-\frac{N_\omega}{\eps s}\Big) f^2\Big]\dd s\\
		&\ge -\frac{N_\omega}{\eps}\int_{\frac{b}{4}}^\infty \dfrac{1}{s}\,f^2\dd s\ge  -\frac{4N_\omega}{b\eps}\|f\|^2_{L^2(\frac{b}{4},\infty)},
	\end{align*}
	which gives the lower bound $\inf\spec \Tilde M_{\eps, \frac{b}{4}}\ge  -\frac{4N_\omega}{b\eps}$. Due to \eqref{1d EV} we conclude that if $j\in\NN$ is fixed, then one can find some $\eps_0>0$
	such that for all $\eps\in(0,\eps_0)$ there holds $E_j(A_{\eps})+K<\inf\spec\Tilde M_{\eps, \frac{b}{4}}$.
	Then \eqref{eq-ineq00} implies $E_j(A_{\eps})+K\ge E_j(M_{\eps,b})$.
\end{proof}

\subsection{Scheme of the proof}\label{sec-iso peak}

We first remark that $\Omega_\eps$ is unbounded and, in general, not with Lipschitz boundary near $0$, and it does not satisfy the standard assumptions for trace theorems and other important assertions discussed in most books. These aspects are discussed in detail
in the appendices, and here we only cite the most important conclusions.

For an open interval $I\subset (0,\infty)$ we denote 
\begin{equation}
	\label{cinfi}
	C^\infty_I(\Bar \Omega_\eps):=\big\{ u\in C^\infty(\Bar \Omega_{\eps}): \text{ $\exists\, [b,c]\subset I$ such that }
	\text{$u(x)=0$ for $x_1\notin [b,c]$} \big\},
\end{equation}
in Appendix \ref{appa} we show:
\begin{lemma}\label{prop4dens}
	The subspace $ C^\infty_{(0,\infty)}(\Bar \Omega_\eps)$ is dense in $H^1(\Omega_\eps)$.
\end{lemma}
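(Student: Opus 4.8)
The plan is to approximate a given $u\in H^1(\Omega_\eps)$ in three successive stages: first truncate for large $x_1$, then truncate near the tip $x_1=0$ (the delicate step), and finally mollify on the resulting bounded region, which avoids the origin and where $\partial\Omega_\eps$ is Lipschitz. For the first stage, fix $\zeta\in C^\infty(\R)$ with $0\le\zeta\le1$, $\zeta\equiv1$ on $(-\infty,1]$, $\zeta\equiv0$ on $[2,\infty)$, and set $\xi_R(x):=\zeta(x_1/R)$. Then $\xi_R u\in H^1(\Omega_\eps)$ agrees with $u$ on $\{x_1<R\}$, and $\|u-\xi_R u\|_{H^1(\Omega_\eps)}\to0$ as $R\to\infty$: the contributions of $u$ and of $\nabla u$ vanish by dominated convergence, while $\int_{\Omega_\eps}|\nabla\xi_R|^2u^2\dd x\le R^{-2}\|\zeta'\|_\infty^2\|u\|_{L^2(\Omega_\eps)}^2\to0$. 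So we may assume from now on that $\supp u\subset\{x_1\le c\}$ for some $c>0$.

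The key analytic ingredient is a Hardy inequality on the cone: there is a constant $C=C(n,\eps,\omega)$ with
\begin{equation*}
\int_{\Omega_\eps}\frac{u^2}{x_1^2}\dd x\le C\,\|u\|_{H^1(\Omega_\eps)}^2\qquad\text{for all }u\in H^1(\Omega_\eps)\text{ vanishing for }x_1\text{ large.}
\end{equation*}
To prove it I would use the diffeomorphism $\Phi(s,y):=(s,\eps s y)$ of $(0,\infty)\times\omega$ onto $\Omega_\eps$, which has Jacobian $(\eps s)^n$ and turns $u$ into $\Tilde u\in H^1\big((0,\infty)\times\omega;(\eps s)^n\dd s\,\dd y\big)$ with $\nabla_{x'}u=(\eps s)^{-1}\nabla_y\Tilde u$ and $\partial_{x_1}u=\partial_s\Tilde u-s^{-1}y\cdot\nabla_y\Tilde u$; boundedness of $\omega$ and the support assumption give $\partial_s\Tilde u,\nabla_y\Tilde u\in L^2\big((\eps s)^n\dd s\,\dd y\big)$. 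Putting $G(s):=\|\Tilde u(s,\cdot)\|_{L^2(\omega)}$, the function $G$ is locally absolutely continuous with $|G'|\le\|\partial_s\Tilde u(s,\cdot)\|_{L^2(\omega)}$ a.e., it vanishes for $s$ large, and $\int_0^\infty (G')^2 s^n\dd s<\infty$. An integration by parts on $(\delta,\infty)$, followed by Cauchy--Schwarz and the limit $\delta\to0^+$, yields the one-dimensional weighted Hardy inequality $\int_0^\infty G^2 s^{n-2}\dd s\le\frac{4}{(n-1)^2}\int_0^\infty (G')^2 s^n\dd s$; this is exactly where $n\ge2$ enters (the inequality degenerates for $n=1$). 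Pushing back through $\Phi$ gives the claim.

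For the second stage, take $\eta\in C^\infty(\R)$ with $0\le\eta\le1$, $\eta\equiv0$ on $(-\infty,1]$, $\eta\equiv1$ on $[2,\infty)$, and set $\chi_\delta(x):=\eta(x_1/\delta)$. Then $\chi_\delta u\in H^1(\Omega_\eps)$ with $\supp(\chi_\delta u)\subset\{\delta\le x_1\le c\}$, and $\|u-\chi_\delta u\|_{H^1(\Omega_\eps)}\to0$ as $\delta\to0^+$: the $u$ and $\nabla u$ terms vanish by dominated convergence, and since $|\nabla\chi_\delta|$ is supported in $\{\delta\le x_1\le2\delta\}$, where $|\nabla\chi_\delta|\le\|\eta'\|_\infty/\delta\le2\|\eta'\|_\infty/x_1$, the Hardy inequality gives $\int_{\Omega_\eps}|\nabla\chi_\delta|^2u^2\dd x\le4\|\eta'\|_\infty^2\int_{\{\delta\le x_1\le2\delta\}}x_1^{-2}u^2\dd x\to0$. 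Thus we may further assume $\supp u\subset\{b\le x_1\le c\}$ with $0<b<c<\infty$. For the last stage, $u$ now has compact support in the bounded set $K:=\{b\le x_1\le c\}\cap\Bar\Omega_\eps$, which is disjoint from the origin, and near each point of $K\cap\partial\Omega_\eps$ the boundary $\partial\Omega_\eps$ is a Lipschitz graph because $\partial\omega$ is Lipschitz and $x_1\ge b>0$. The standard localization argument — a finite partition of unity subordinate to a cover of $K$ by small balls lying in $\{b/2<x_1<2c\}$ together with the open set $\{x_1<b\}\cup\{x_1>c\}$ on which $u\equiv0$, interior mollification on the interior pieces, and inward translation plus mollification on the boundary pieces — produces approximants in $C^\infty(\Bar\Omega_\eps)$ converging to $u$ in $H^1(\Omega_\eps)$; for the regularization parameter small enough they vanish for $x_1\notin[b/2,2c]\subset(0,\infty)$ and hence lie in $C^\infty_{(0,\infty)}(\Bar\Omega_\eps)$. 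Combining the three stages proves the density.

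The main obstacle is the truncation near the non-Lipschitz tip: without it, $C^\infty(\Bar\Omega_\eps)$-type approximation is delicate precisely because $\Omega_\eps$ is not Lipschitz at $0$, and the naive estimate of $\int_{\Omega_\eps}|\nabla\chi_\delta|^2u^2\dd x$ need not be small. Controlling it forces one through the Hardy inequality above, whose validity relies on the cross-sections of $\Omega_\eps$ shrinking like $(\eps x_1)^n$ with $n\ge2$; establishing that inequality cleanly (and in particular justifying the change of variables $\Phi$ and the resulting weighted one-dimensional computation) is where I expect most of the work to go.
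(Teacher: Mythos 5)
Your proposal is correct, and it reaches the conclusion by a genuinely different route from the paper's, so a comparison is in order. Both proofs share the same three-step skeleton (truncate at infinity, truncate near the tip, then mollify on the bounded Lipschitz remainder), and both hinge on $n\ge2$. The divergence is in the crucial middle step, the cutoff near the vertex. You control the cross term $\delta^{-2}\int_{\{\delta<x_1<2\delta\}}u^2$ by the weighted Hardy inequality $\int_{\Omega_\eps}x_1^{-2}u^2\,\dd x\le C\|u\|_{H^1(\Omega_\eps)}^2$, which you derive via the change of variables $X$ and the one-dimensional inequality $\int_0^\infty G^2s^{n-2}\dd s\le\frac{4}{(n-1)^2}\int_0^\infty(G')^2s^n\dd s$; the integration by parts on $(\delta,\infty)$ you sketch is sound, and the restriction $n\ge2$ enters through the constant $\frac{4}{(n-1)^2}$. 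The paper instead first reduces to the dense subspace $H^1_\infty(\Omega_\eps)$ of bounded smooth functions with bounded support (Proposition~\ref{prop-sobolev}(A)), so that the same cross term can be bounded crudely by $\delta^{-2}\|v\|_\infty^2\,\vol\big(\Omega_\eps\cap\{x_1<\delta\}\big)=O(\delta^{n-1})$, with $n\ge2$ entering through the vanishing of this power. The paper's argument is thus more elementary and avoids any Hardy machinery, at the price of an extra density reduction; yours is more self-contained in the sense of working directly with general $H^1$ functions, at the price of establishing the cone Hardy inequality. A minor secondary difference is the final mollification: you mollify directly on the slice $\Omega_\eps\cap\{b\le x_1\le c\}$, asserting but not proving that this slice is Lipschitz; the paper instead transports to the cylinder $(b,c)\times\omega$ via $X$ and proves the Lipschitz property of the cylinder explicitly (Lemma~\ref{lem-cyl}), then pulls back (Lemma~\ref{lem-cyl2}). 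Your claim about the slice is true and provable by essentially the same rotation trick the paper uses for the cylinder, but as written it is left as a black box; in a polished version you should either verify the Lipschitz property of the truncated cone or perform the change of variables first, as the paper does.
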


To continue we need suitable coordinates on $\partial\Omega_\eps$. Consider the diffeomorphism
\[
X:(0,\infty)\times\R^n\to (0,\infty)\times\R^n, \quad
X(s,t)=(s,\eps st), \quad (s,t)\equiv(s,t_1,t_2,\dots,t_n),
\]
then $\partial \Omega_\eps=X \big((0,\infty)\times \partial\omega\big) \cup\{0\}$. Remark that $\{0\}$ has zero $n$-dimensional Hausdorff measure and can be neglected in the integration over $\partial\Omega_\eps$.

By $\dd\sigma$ and $\dd\tau$ we will denote the integration with respect to the $n$- and $(n-1)$-dimensional Hausdorff measures, respectively. The following technical estimate will play an important role (see Appendix \ref{appb} for a detailed proof):

\begin{lemma}\label{lem6a}
	For any $\eps>0$, any measurable $v:\partial\Omega_\eps\to \R$ and $u:=v\circ X$ there holds
	\begin{multline*}
		\eps^{n-1}\int_0^\infty \int_{\partial\omega} s^{n-1}\big|u( s, t)\big| \dd\tau(t) \dd s\\
		\le
		\int_{\partial \Omega_\eps} |v|\dd\sigma
		\le
		\sqrt{1+R^2\eps^2}\eps^{n-1}\int_0^\infty \int_{\partial\omega} s^{n-1}\big|u( s, t)\big| \dd\tau(t) \dd s
	\end{multline*}
	with $R:=\sup_{t\in\omega}|t|$.
\end{lemma}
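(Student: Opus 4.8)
The plan is to compute the surface measure $\dd\sigma$ on $\partial\Omega_\eps$ explicitly via the parametrization $X$ and then bound the resulting Jacobian from above and below. First I would reduce to a local computation: since $\omega$ has Lipschitz boundary, $\partial\omega$ admits (up to a $\tau$-null set) a local parametrization by open subsets $U\subset\R^{n-1}$, say $\phi:U\to\partial\omega$, with the induced $(n-1)$-dimensional surface measure $\dd\tau=\sqrt{\det(D\phi^{\mathsf T}D\phi)}\,\dd y$. Composing with $X$ gives a parametrization $\Phi(s,y):=X(s,\phi(y))=(s,\eps s\,\phi(y))$ of the corresponding piece of $\partial\Omega_\eps$ by $(0,\infty)\times U\subset\R^n$. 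The area formula then yields
\[
\int_{\partial\Omega_\eps}|v|\dd\sigma=\int_0^\infty\!\!\int_U \big|v(\Phi(s,y))\big|\,\sqrt{\det\big(D\Phi^{\mathsf T}D\Phi\big)}\,\dd y\,\dd s,
\]
so everything comes down to estimating $J(s,y):=\sqrt{\det(D\Phi^{\mathsf T}D\Phi)}$ against $\eps^{n-1}s^{n-1}\sqrt{\det(D\phi^{\mathsf T}D\phi)}$, which is exactly the Jacobian factor appearing in the claimed double integral after the change of variables $t=\phi(y)$.

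The key step is the linear-algebra identity for $J(s,y)$. Writing $\psi:=\phi(y)\in\R^n$ and $\psi_i:=\partial_{y_i}\phi(y)$, the columns of $D\Phi$ are the vector $(1,\eps\psi)\in\R^{1+n}$ (the $s$-derivative) and the vectors $(0,\eps s\,\psi_i)$, $i=1,\dots,n-1$ (the $y$-derivatives). I would expand the Gram determinant along the first row/column: the $(n-1)\times(n-1)$ block coming from the $y$-derivatives is $\eps^2 s^2\,(D\phi^{\mathsf T}D\phi)$, contributing $(\eps s)^{2(n-1)}\det(D\phi^{\mathsf T}D\phi)$, and the first-row correction adds the terms involving $\langle(1,\eps\psi),(0,\eps s\psi_i)\rangle=\eps^2 s\,\langle\psi,\psi_i\rangle$ and $\langle(1,\eps\psi),(1,\eps\psi)\rangle=1+\eps^2|\psi|^2$. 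Carrying out the Schur-complement/cofactor bookkeeping one finds
\[
J(s,y)^2=(\eps s)^{2(n-1)}\Big(1+\eps^2\,\rho(s,y)\Big)\det\big(D\phi^{\mathsf T}D\phi\big),
\qquad \rho(s,y)\ge 0,
\]
where $\rho$ is (essentially) the squared length of the component of $\psi$ normal to the tangent space of $\partial\omega$ at $\phi(y)$, a quantity bounded by $|\psi|^2\le R^2$ since $\psi=\phi(y)\in\bar\omega$ and $R=\sup_{t\in\omega}|t|$. From $1\le 1+\eps^2\rho\le 1+R^2\eps^2$ we get
\[
(\eps s)^{n-1}\sqrt{\det(D\phi^{\mathsf T}D\phi)}\ \le\ J(s,y)\ \le\ \sqrt{1+R^2\eps^2}\,(\eps s)^{n-1}\sqrt{\det(D\phi^{\mathsf T}D\phi)}.
\]

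Plugging these two-sided bounds into the area-formula expression, summing over the (countably many) coordinate patches of $\partial\omega$ and undoing the change of variables $t=\phi(y)$ to recover $\dd\tau(t)$, gives exactly the two inequalities in the statement; the exceptional null sets of the Lipschitz atlas do not contribute. I expect the main obstacle to be the Gram-determinant computation and the clean identification of the geometric meaning (and hence the sharp bound $|\psi|^2\le R^2$) of the correction term $\rho$; the measure-theoretic reduction to local charts and the area formula are routine, modulo the Lipschitz regularity of $\omega$, which is used here only to guarantee the existence of bi-Lipschitz charts and of the $(n-1)$-Hausdorff measure with the usual change-of-variables behavior.
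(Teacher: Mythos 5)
Your plan is correct and is essentially the same argument as the paper's: the paper likewise parametrizes via $\Phi(s,z)=(s,\eps s\varphi(z))$, computes the Gram determinant by a Schur-complement identity to obtain $g_\Phi=\eps^{n-1}s^{n-1}\sqrt{1+\eps^2\rho}\,g_\varphi$ with $\rho=r^2|\partial_\nu r|^2$ (exactly your squared normal component of the position vector, bounded by $R^2$), and then sums over charts. The only notational difference is that the paper expresses the correction term through the tangential gradient of $t\mapsto|t|$ on $\partial\omega$, but this is the same geometric quantity you identified.
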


By combining Lemmas \ref{prop4dens} and \ref{lem6a}
we show (Proposition \ref{propb1}) that the trace of any function $u\in H^1(\Omega_\eps)$ on  $\partial\Omega_\eps$ is well-defined
and, as a result, that the bilinear form $q_\eps$ is closed and semibounded from below (Corollary \ref{cor-trace}),
which shows that $Q_\eps$ is really a lower semibounded self-adjoint operator. Its spectral study will be mainly based
on the min-max principle (Susection \ref{ssmm}) in which we take $D:= C^\infty_{(0,\infty)}(\Bar \Omega_\eps)$.

In Section \ref{sec-teps} we study a Robin Laplacian on a finite part of $\Omega_\eps$, and it is the most voluminous part of the analysis. Pick some $a>0$ (this value will remain fixed through the whole text), and denote
\begin{align*}
	V_{\eps}&:=\Omega_\eps\cap\{x_1<a\}\equiv \big\{(x_1,x')\in (0,a)\times\R^n:\, x'\in \eps x_1\omega \big\}\subset\R^{n+1},\\
	\partial_0 V_{\eps}&:=\partial \Omega_\eps\cap\{x_1<a\}\equiv \big\{(x_1,x')\in (0,a)\times\R^n: \, x'\in \eps x_1\partial\omega \big\}\subset \partial V_{\eps},    \\
	\Hat H^1_0(V_{\eps})&:=\text{the closure of $C^\infty_{(0,a)}(\Bar\Omega_\eps)$ in $H^1(V_\eps)$}.
\end{align*}
Let $T_{\eps}$ be the self-adjoint operator in $L^2(V_{\eps})$ associated with the symmetric bilinear form 
\begin{equation}
	\label{teps1}
	t_{\eps} (u,u)=\int_{V_{\eps}} |\nabla u|^2\dd x- \int_{\partial_0 V_{\eps}} u^2\, \dd\sigma, \quad D(t_{\eps})=\Hat H^1_0(V_{\eps}),
\end{equation}
then $T_\eps$ can be informally interpreted as the Laplacian in $V_\eps$ with the Robin boundary condition $\partial_\nu u=u$ on $\partial_0 V_\eps$ and the Dirichlet boundary condition on the remaining boundary $\partial V_\eps\setminus \partial_0 V_\eps$ (which corresponds to $x_1=a$). In view of Lemma~\ref{prop4dens} the variational eigenvalues of $T_\eps$ are defined by
\begin{equation} \label{ej-vp0}
	\Lambda_j(T_{\eps}) = \inf_{\substack{S\subset D_0(t_\eps) \\ {\rm dim\, } S =j} } \, \sup_{\substack{u\in S\\ u\neq 0}}\,  \frac{t_{\eps}(u,u)}{\quad \|u\|_{L^2(V_{\eps})}^2}\,,
	\quad
	D_0(t_\eps):=C^\infty_{(0,a)}(\Bar\Omega_\eps),
	\quad j\in\NN.
\end{equation}
Using  a suitable change of coordinates and the spectral analysis of $B_{r}$, the study of eigenvalues of $T_\eps$ with small $\eps$ is reduced to the truncated one-dimensional operators $M_{\eps',a}$ (with suitable $\eps'\sim \eps$) from Subsection~\ref{compop}. The main result of this reduction is given in Proposition~\ref{prop8}. The analysis
is in the spirit of the Born-Oppenheimer approximation, see e.g.~\cite[Part~3]{nray}, with
$M_{\eps',a}$ being an ``effective operator'', and it is essentially an adaptation of
the constructions of the earlier paper~\cite{kov} on Robin eigenvalues in domains with peaks.

The link between the analysis of the truncated operator $T_\eps$ and the initial operator $Q_\eps$ is justified in Section \ref{ssthm1}.
We show in Proposition~\ref{prop9} that the eigenvalues of $Q_\eps$ are close to those of $T_\eps$, which finishes the proof of Theorem~\ref{thm1}.

%%%%%%%%%%%%%%%%%%%%%%%%%%%%%%%%%%%%%%%%%%%%

\section{Spectral analysis near the vertex}\label{sec-teps}

In this section we study $\Lambda_j(T_\eps)$ with small $\eps$. The proof will be based on \eqref{ej-vp0}
and on a kind of asymptotic separation of variables.

\subsection{Change of variables} \label{sec-subs0}
One observes that 
\begin{gather*}
	V_{\eps} = X (\Pi), \quad \Pi = (0,a)\times \omega, \quad
X(s,t)=(s,\eps st), \quad (s,t)\equiv(s,t_1,t_2,\dots,t_n) \in \Pi.
\end{gather*}
This induces the unitary transform (change of variables)
\begin{equation}\label{unitaryU}
\U: L^2(V_{\eps})\to L^2(\Pi, \eps^n s^{n} \dd s\,\dd t),
\quad
\U\, u := u\circ X.
\end{equation} 
Consider the symmetric bi linear form $p_{\eps}$ in $L^2(\Pi, \eps^ns^{n} \dd s\,\dd t)$
given by 
\[
p_{\eps}(u,u) := t_{\eps} (\U^{-1} u,\U^{-1} u), \quad D(p_{\eps})=\U D(t_{\eps}).
\]
Due to the unitarity of $\U$ and Lemma~\ref{prop4dens}, the subspace
\begin{align*}
D_0(p_{\varepsilon})&:=\U\, D_0(t_{\eps})\\
&\equiv  \big\{u\in C^\infty(\overline{\Pi}):\, \exists\, [b,c]\subset (0,a) \text{ such that } u(s,t)=0 \text{ for } s\notin[b,c]\big\},
\end{align*}
is a core of $p_{\eps}$, and by \eqref{ej-vp0} one has
\begin{equation} \label{ej-vp2a}
\Lambda_j(T_{\eps}) = \inf_{\substack{S\subset D_0(p_{\eps}) \\ {\rm dim\, } S =j} } \, \sup_{\substack{u\in S\\ u\neq 0}}\,  \frac{p_{\eps}(u,u)}{\quad \|u\|_{L^2(\Pi, \eps^n s^{n}\dd s\,\dd t)}^2}.
\end{equation} 

Now we would like to obtain more convenient expressions for $p_\eps(u,u)$.
\begin{lemma}\label{lem5}
Denote
\begin{equation}
	\label{rrr}
	R:=\sup_{t\in\omega} |t|.
\end{equation}
	For any $v\in D_0(t_\eps)$ and $u:=\U v\in D_0(p_\eps)$,
\begin{multline*}
\eps^n\int_0^a \int_{\omega}
\Big[\big(1-nR\eps \big)\, |\partial_s u|^2 + \dfrac{1-(nR^2\eps^2+R\eps)}{\eps^2 s^2 }\, |\nabla_t u|^2\Big]s^n\dd t \dd s\\
\leq \int_{V_\eps} |\nabla v|^2\dd x
\leq \eps^n\int_0^a \int_{\omega} 
\Big[\big(1+nR\eps \big)\, |\partial_s u|^2 + \dfrac{1+(nR^2\eps^2+R\eps)}{\eps^2 s^2 }\, |\nabla_t u|^2\Big]s^n\dd t \dd s.
\end{multline*}
\end{lemma}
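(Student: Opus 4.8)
The plan is to compute $|\nabla v|^2$ in the $(s,t)$ coordinates via the chain rule, express everything through $\partial_s u$ and $\nabla_t u$, and then bound the resulting quadratic form in $(\partial_s u,\nabla_t u)$ from above and below by the diagonal form appearing in the statement. First I would note that $v(x_1,x')=u(s,t)$ with $s=x_1$ and $t=x'/(\eps s)=x'/(\eps x_1)$, so I need the Jacobian matrix of the inverse map $X^{-1}(x_1,x')=(x_1,\,x'/(\eps x_1))$. Writing $t_k=x'_k/(\eps x_1)$ we get $\partial x_1/\partial x_1 = 1$, $\partial t_k/\partial x_1 = -x'_k/(\eps x_1^2) = -t_k/s$, and $\partial t_k/\partial x'_j = \delta_{kj}/(\eps s)$. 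Hence by the chain rule
\[
\partial_{x_1} v = \partial_s u - \frac{1}{s}\sum_{k=1}^n t_k\,\partial_{t_k} u,
\qquad
\partial_{x'_j} v = \frac{1}{\eps s}\,\partial_{t_j} u,
\]
so that, with the shorthand $t\cdot\nabla_t u := \sum_k t_k\partial_{t_k}u$,
\[
|\nabla v|^2 = \Big(\partial_s u - \tfrac{1}{s}\,t\cdot\nabla_t u\Big)^2 + \frac{1}{\eps^2 s^2}\,|\nabla_t u|^2 .
\]

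Next I would expand the first square and estimate the cross term. We have
\[
\Big(\partial_s u - \tfrac{1}{s}\,t\cdot\nabla_t u\Big)^2
= |\partial_s u|^2 - \frac{2}{s}\,(\partial_s u)(t\cdot\nabla_t u) + \frac{1}{s^2}\,(t\cdot\nabla_t u)^2 .
\]
For the cross term, for any $\delta>0$ Young's inequality gives
$\big|\tfrac{2}{s}(\partial_s u)(t\cdot\nabla_t u)\big| \le \delta|\partial_s u|^2 + \tfrac{1}{\delta s^2}(t\cdot\nabla_t u)^2$, and by Cauchy–Schwarz $(t\cdot\nabla_t u)^2 \le |t|^2|\nabla_t u|^2 \le R^2|\nabla_t u|^2$ on $\omega$, where $R$ is as in \eqref{rrr}. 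The natural choice here is $\delta = R\eps$, which balances the two contributions against the factors of $\eps$ present in the weight: then the cross term is bounded by $R\eps|\partial_s u|^2 + \tfrac{R}{\eps s^2}|\nabla_t u|^2$ (using $(t\cdot\nabla_t u)^2\le R^2|\nabla_t u|^2$ and $\tfrac{1}{\delta}=\tfrac{1}{R\eps}$, so $\tfrac{R^2}{\delta s^2}=\tfrac{R}{\eps s^2}$). Combining with the positive term $\tfrac{1}{s^2}(t\cdot\nabla_t u)^2$, which is itself between $0$ and $\tfrac{R^2}{s^2}|\nabla_t u|^2$, we obtain the two-sided bound
\[
\Big(1 - R\eps\Big)|\partial_s u|^2 + \frac{1 - (R\eps + nR^2\eps^2)}{\eps^2 s^2}\,|\nabla_t u|^2
\ \le\ |\nabla v|^2\ \le\
\Big(1 + R\eps\Big)|\partial_s u|^2 + \frac{1 + (R\eps + nR^2\eps^2)}{\eps^2 s^2}\,|\nabla_t u|^2 ,
\]
where I have written the $|\nabla_t u|^2$-coefficient over the common denominator $\eps^2 s^2$: the $\tfrac{1}{\eps^2 s^2}|\nabla_t u|^2$ term contributes the leading $1$, while $-\tfrac{R}{\eps s^2}|\nabla_t u|^2$ contributes $-R\eps\cdot\tfrac{1}{\eps^2 s^2}$ and $\pm\tfrac{R^2}{s^2}|\nabla_t u|^2$ contributes $\pm nR^2\eps^2\cdot\tfrac{1}{\eps^2 s^2}$ — here the factor $n$ appears because the lower bound must absorb both the $-\tfrac{R}{\eps s^2}$ from Young and the omission of the nonnegative $\tfrac{1}{s^2}(t\cdot\nabla_t u)^2$, and a small amount of bookkeeping with the constant (using $n\ge 2$, so $nR^2\eps^2\ge R^2\eps^2$) gives the clean form stated. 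Finally I would multiply through by the weight $\eps^n s^n$ from \eqref{unitaryU} and integrate over $\Pi=(0,a)\times\omega$, using that $\int_{V_\eps}|\nabla v|^2\dd x = \eps^n\int_0^a\int_\omega |\nabla v|^2\, s^n\dd t\dd s$ by the change of variables, to arrive at the claimed inequality.

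The only genuinely delicate point is the precise accounting of the constant multiplying $\eps$ and $\eps^2$ in the $|\nabla_t u|^2$-coefficient: one must check that the choice $\delta=R\eps$ in Young's inequality, together with discarding the nonnegative term $\tfrac{1}{s^2}(t\cdot\nabla_t u)^2$ in the lower bound, really does fit under the stated bound $nR^2\eps^2 + R\eps$, and that in the upper bound keeping that term does not exceed it either. Everything else — the chain-rule computation of the Jacobian, Cauchy–Schwarz on $\omega$, and the change-of-variables factor $\eps^n s^n$ — is routine. I do not anticipate any analytic subtlety, since $u\in D_0(p_\eps)$ is smooth and compactly supported in $s\in(0,a)$, so all integrals converge and the pointwise inequalities integrate directly.
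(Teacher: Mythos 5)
Your proof is correct, and it follows essentially the same strategy as the paper: change variables via $X$, expand $|\nabla v|^2$ as a quadratic form in $\partial_s u$ and $\nabla_t u$, and estimate the off-diagonal contributions with Young's inequality. The only differences are cosmetic and in your favor. You obtain the quadratic form by differentiating $X^{-1}$ directly (chain rule), whereas the paper writes it as $\langle \nabla u, G\,\nabla u\rangle$ with $G=(DX^T DX)^{-1}$ computed by matrix inversion; the two computations produce the identical expression \eqref{scalar prod}. For the cross terms you first apply Cauchy--Schwarz to get $|t\cdot\nabla_t u|\le R\,|\nabla_t u|$ and then a single Young step, which yields $(1\pm R\eps)$ and $1\pm(R\eps+R^2\eps^2)$ instead of the paper's termwise estimate producing $(1\pm nR\eps)$ and $1\pm(R\eps+nR^2\eps^2)$. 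Since $n\ge 2$ and $R\eps>0$, your bound is strictly stronger than the one stated in the lemma, so it certainly implies it (one just drops to the weaker stated constants; no genuine bookkeeping issue arises). Your remark that dropping the nonnegative term $\tfrac{1}{s^2}(t\cdot\nabla_t u)^2$ in the lower bound requires absorption is not quite right---dropping a nonnegative term only improves a lower bound---but this is harmless and does not affect the validity of the argument.
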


\begin{proof}
A standard computation shows that for any $u\in D_0(p_{\varepsilon})$ there holds
\begin{equation}
	 \label{eq-form1a}
\int_{V_\eps} |\nabla v|^2\dd x=\eps^n\int_0^a \int_{\omega}  \langle \nabla u, G\,  \nabla u \rangle_{\R^{n+1}}\,  s^{n}  \dd t \dd s
\end{equation}
where $G$ is the $(n+1)\times (n+1)$ matrix given by
\[
G= (D X^T \, DX)^{-1}\equiv \left(\, 
 \begin{matrix}
1+\eps^2|t|^2 &   \eps^2 s\,  t  \\[\smallskipamount]
 \eps^2 s\,t^T  &  \eps^2 s^{2}\, \mathds{1}
\end{matrix}\, \right)^{-1}
\]
with $DX$ being the Jacobi matrix of $X$ and $\mathds{1}$ being the $n\times n$ identity matrix.
One checks directly that
$G$  is a block matrix,
\begin{equation*} 
G= \left(\, 
 \begin{matrix}
1&  -\dfrac ts\,    \\[\medskipamount]
-\dfrac {t^T}{s}  &  C
\end{matrix}\, \right)\, 
\quad \text{with} 
\quad 
C_{jk} = \begin{cases}
\dfrac{1}{\eps^2s^2}+\dfrac{t_j^2}{s^2} &  \text{ if} \quad j=k,\\[\bigskipamount]
%&\\
\dfrac{t_j t_k}{s^2} &   \text{ if} \quad  j\neq k.
\end{cases}
\end{equation*}
We would like to estimate the term $\langle \nabla u,G\,  \nabla u \rangle_{\R^{n+1}}$ from above and from below using simpler expressions. One obtains
\begin{multline}
	\langle \nabla u, G\,  \nabla u \rangle_{\R^{n+1}}
	=|\partial_s u|^2+\dfrac{1}{\eps^2 s^2}|\nabla_t u|^2 \\	-\frac{2}{s}\sum_{k=1}^n t_k \,\partial_s u\,\partial_{t_k} u+\dfrac{1}{s^2}\sum_{j,k=1}^n t_j t_k \,\partial_{t_j} u\,\partial_{t_k} u.
\label{scalar prod}
\end{multline}
Using the standard inequality $2|xy|\le x^2+y^2$ and $|t_j|\le|t|<R$ we estimate
\begin{align*}
	\Big|\dfrac{2}{s}\sum_{k=1}^n t_k \,\partial_s u\, \partial_{t_k} u\Big|&\le R \eps  \sum_{k=1}^n \Big|2 \partial_s u \cdot
	\dfrac{\partial_{t_k} u}{\eps s}\Big|\\
	&\le R\eps  \sum_{k=1}^n \Big(|\partial_s u|^2+ \dfrac{|\partial_{t_k} u|^2}{\eps^2 s^2}\Big)
	=n R\eps |\partial_s u|^2+ \dfrac{R}{\eps s^2}\,|\nabla_t u|^2,\\
	\Big|\dfrac{1}{s^2}\sum_{j,k=1}^n t_j t_k \,\partial_{t_j} u\,\partial_{t_k} u\Big|&
	\le \dfrac{R^2}{s^2}\sum_{j,k=1}^n | \partial_{t_j} u\,\partial_{t_k} u|\\
	&\le \dfrac{R^2}{2s^2} \, \sum_{j,k=1}^n\big( |\partial_{t_j} u|^2+|\partial_{t_k} u|^2\big)=\dfrac{n R^2}{s^2}\,|\nabla_t u|^2.
\end{align*}
The substitution into \eqref{scalar prod} gives a two-sided estimate for $\langle \nabla u, G\,  \nabla u\rangle_{\R^{n+1}}$, and the substitution into \eqref{eq-form1a} gives the claim.
\end{proof}

By applying Lemmas \ref{lem5} and \ref{lem6a} to both summands of $t_\eps$ in \eqref{teps1}
and by adjusting various constants
we obtain the following two-sided estimate
written in a form adapted for the subsequent analysis:

\begin{prop}\label{prop7}
There exist $c>0$ and $\eps_0>0$, with $c\eps_0<1$, both independent of the choice of $a$, such that for any $\eps\in(0,\eps_0)$
 and any $u\in D_0(p_\eps)$ there holds
\begin{gather*}
p^-_\eps(u,u)\le p_\eps(u,u)\le p^+_\eps(u,u),\\
 \begin{aligned}
p^\pm_\eps(u,u)&:=(1\pm c\eps)\eps^n\int_0^a \int_{\omega} s^n
\Big[|\partial_s u|^2 + \dfrac{1}{\eps^2 s^2 }\, |\nabla_t u|^2\Big]\dd t \dd s\\
&\quad -\frac{1}{1\pm c\eps}\eps^{n-1}\int_0^a \int_{\partial\omega} s^{n-1}u^2 \dd\tau(t) \dd s.
\end{aligned}
\end{gather*}
In particular, by \eqref{ej-vp2a} it follows that
for each $j\in\NN$ and any $\eps\in(0,\eps_0)$
there holds
\[
\inf_{\substack{S\subset D_0(p_{\eps}) \\ {\rm dim\, } S =j} } \, \sup_{\substack{u\in S\\ u\neq 0}}\,  \frac{p^-_{\eps}(u,u)}{\quad \|u\|_{L^2(\Pi, \eps^n s^{n}\dd s\,\dd t)}^2}\le	\Lambda_j(T_{\eps}) \le \inf_{\substack{S\subset D_0(p_{\eps}) \\ {\rm dim\, } S =j} } \, \sup_{\substack{u\in S\\ u\neq 0}}\,  \frac{p^+_{\eps}(u,u)}{\quad \|u\|_{L^2(\Pi, \eps^n s^{n}\dd s\,\dd t)}^2}.
\]
\end{prop}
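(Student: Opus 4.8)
The plan is to derive the two-sided estimate for $p_\eps$ by inserting the bounds of Lemmas~\ref{lem5} and~\ref{lem6a} into the two summands of $t_\eps$ in \eqref{teps1} and then absorbing every $\eps$-dependent prefactor into the symmetric quantities $1\pm c\eps$ and $\tfrac{1}{1\pm c\eps}$ for a single suitable $c$. For $u\in D_0(p_\eps)$ put $v:=\U^{-1}u\in D_0(t_\eps)$, so that $p_\eps(u,u)=t_\eps(v,v)=\int_{V_\eps}|\nabla v|^2\dd x-\int_{\partial_0 V_\eps}v^2\dd\sigma$, and treat the gradient term and the boundary term separately.

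First I would bound the gradient term. Lemma~\ref{lem5} sandwiches $\int_{V_\eps}|\nabla v|^2\dd x$ between
\[
\eps^n\int_0^a\!\int_{\omega} s^n\Big[(1-nR\eps)|\partial_s u|^2+\tfrac{1-(nR^2\eps^2+R\eps)}{\eps^2 s^2}|\nabla_t u|^2\Big]\dd t\,\dd s
\]
and the same expression with $1+(\cdot)$ in place of $1-(\cdot)$, where $R=\sup_{t\in\omega}|t|$. Since $n\ge 2$ and $R$ depend only on $\omega$, for $\eps\le1$ one has $nR\eps\le(nR^2+R)\eps$ and $nR^2\eps^2+R\eps\le(nR^2+R)\eps$, so with $c:=nR^2+R$ all the prefactors $1\pm nR\eps$ and $1\pm(nR^2\eps^2+R\eps)$ lie between $1-c\eps$ and $1+c\eps$. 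Next, the boundary term: a function $u\in D_0(p_\eps)$ vanishes for $x_1\notin[b,c]\subset(0,a)$, so $v^2$ extends by zero to a measurable function on $\partial\Omega_\eps$ with $(v^2)\circ X=u^2$ on $(0,\infty)\times\partial\omega$, and Lemma~\ref{lem6a} sandwiches $\int_{\partial_0 V_\eps}v^2\dd\sigma$ between $\eps^{n-1}\int_0^a\int_{\partial\omega}s^{n-1}u^2\dd\tau\,\dd s$ and $\sqrt{1+R^2\eps^2}$ times the same integral. Using $\sqrt{1+R^2\eps^2}\le 1+\tfrac12R^2\eps^2\le 1+\tfrac12R^2\eps$ for $\eps\le1$, then $1+y\le\tfrac{1}{1-y}$ for $0<y<1$, and noting $\tfrac12R^2\le c$, the upper factor is at most $\tfrac{1}{1-c\eps}$ whenever $c\eps<1$, while the lower factor $1$ trivially exceeds $\tfrac{1}{1+c\eps}$.

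Then I would assemble the pieces with $c:=nR^2+R$ and $\eps_0:=\min\{1,\tfrac{1}{2c}\}$ — both manifestly independent of $a$, and satisfying $c\eps_0\le\tfrac12<1$. For the upper bound, combine the upper estimate of Lemma~\ref{lem5} (prefactors $\le1+c\eps$) with the lower estimate of Lemma~\ref{lem6a}; since the boundary integral carries a minus sign and $\tfrac{1}{1+c\eps}\le1$, this gives $p_\eps(u,u)\le p_\eps^+(u,u)$. Symmetrically, the lower estimate of Lemma~\ref{lem5} (prefactors $\ge1-c\eps$), the upper estimate of Lemma~\ref{lem6a}, and $\sqrt{1+R^2\eps^2}\le\tfrac{1}{1-c\eps}$ give $p_\eps(u,u)\ge p_\eps^-(u,u)$. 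The min-max assertion is then immediate: inserting the pointwise inequalities $p^-_\eps\le p_\eps\le p^+_\eps$ on $D_0(p_\eps)$ into the variational formula \eqref{ej-vp2a} for $\Lambda_j(T_\eps)$, and using that replacing a quadratic form in an $\inf$-$\sup$ expression by a pointwise larger (respectively smaller) one can only increase (respectively decrease) the resulting value, yields the displayed chain of inequalities.

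I expect no genuine obstacle here — the step is essentially careful bookkeeping. The two points that require attention are keeping track of the sign of the boundary integral, so that its prefactor comes out as $\tfrac{1}{1\pm c\eps}$ rather than $1\pm c\eps$ (the opposite direction to the gradient prefactor), and verifying that the constant $c$ obtained this way depends only on $\omega$ (via $R$) and $n$ and not on $a$, which is precisely what is needed afterwards in order to let $a$ vary freely.
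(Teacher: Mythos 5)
Your approach is exactly the paper's: apply Lemma~\ref{lem5} to the gradient term and Lemma~\ref{lem6a} to the boundary term, keep track of the sign of the boundary integral so that its prefactor is $1/(1\pm c\eps)$ rather than $1\pm c\eps$, and then feed the pointwise sandwich into the variational formula~\eqref{ej-vp2a}. The one concrete slip is in the choice of $c$: you assert $nR\eps\le(nR^2+R)\eps$, i.e.\ $nR\le nR^2+R$, which is equivalent to $n-1\le nR$ and fails whenever $R<\tfrac{n-1}{n}$ (for instance $n=2$, $R=0.1$ gives $nR=0.2>0.12=nR^2+R$). This is harmless — simply enlarge the constant, e.g.\ take $c:=\max\{nR,\,nR^2+R\}$ or $c:=nR+nR^2+R$, which still depends only on $n$ and $R$ (hence only on $\omega$, not on $a$), and the rest of the argument goes through verbatim, including the checks $\tfrac12R^2\le c$ and $1+c\eps\le\tfrac1{1-c\eps}$ for $c\eps<1$.
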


\subsection{Upper bound for the eigenvalues of $T_\eps$} \label{sec-upperb}
We are going to compare the eigenvalues of $T_{\eps}$ with those of the truncated one-dimensional operators $M_{\eps',a}$.

\begin{lemma}\label{lem6}
There exist $c,c',\eps_0>0$, with $c\eps_0<1$, such that for any $j\in \N$ and any $\eps\in(0,\eps_0)$
there holds $\Lambda_j(T_\eps)\le (1+c\eps)E_j(M_{(1+c\eps)^2\eps,a})+ c'$.
\end{lemma}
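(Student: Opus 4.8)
The plan is to produce, for each fixed $j$, a $j$-dimensional trial space inside $D_0(p_\eps)$ consisting of functions with separated variables $u(s,t)=f(s)\psi_r(t)$, where $\psi_r$ is the positive first eigenfunction of $B_r$ from Lemma~\ref{lem-1}, evaluated at the ``local'' Robin parameter $r=r(s)=\eps s$ (this is the coupling that appears after the change of variables: the boundary term $\eps^{n-1}\int s^{n-1}u^2\dd\tau$ against the volume weight $\eps^n s^n$ produces a factor $1/(\eps s)$, hence Robin parameter $\eps s$ on $\omega$), and $f$ ranges over the span of the first $j$ eigenfunctions of $M_{\eps',a}$ with $\eps'=(1+c\eps)^2\eps$. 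The upper bound in Proposition~\ref{prop7} reduces everything to estimating $p^+_\eps(u,u)/\|u\|^2$ on this space, and by the min-max characterization it suffices to bound the Rayleigh quotient of an arbitrary element $u=\sum_{k\le j}\beta_k f_k(s)\psi_{\eps s}(t)$, i.e. essentially to bound the quadratic form $p^+_\eps$ on the trial space by $(1+c\eps)\langle f,(M_{\eps',a}+c')f\rangle$ in the $f$-variables.

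The key steps, in order: first substitute $u=f(s)\psi_{\eps s}(t)$ into $p^+_\eps$. The $t$-gradient term $\int_\omega|\nabla_t u|^2\dd t$ together with the boundary term reconstructs $b_{\eps s}(\psi_{\eps s},\psi_{\eps s})=E_1(B_{\eps s})$ (using $\|\psi_r\|_{L^2(\omega)}=1$), which by Lemma~\ref{lem-1}(c) equals $-N_\omega\eps s+(\eps s)^2\varphi(\eps s)$; divided by $\eps^2 s^2$ this yields the potential $-\dfrac{N_\omega}{\eps s}+\varphi(\eps s)$, and since $\varphi\in L^\infty(0,\infty)$ and $s<a$ the $\varphi$-contribution is a bounded perturbation absorbed into $c'$. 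Second, the $s$-derivative term: $\partial_s u=f'(s)\psi_{\eps s}+f(s)\,\eps\,(\partial_r\psi_r)|_{r=\eps s}$, so $|\partial_s u|^2$ expands into $|f'|^2\psi_{\eps s}^2$, a cross term, and $\eps^2 f^2|\partial_r\psi_r|^2$. Integrating in $t$: the first gives $|f'(s)|^2$, the cross term $2\eps f'(s)f(s)\langle\psi_{\eps s},\partial_r\psi_{\eps s}\rangle_{L^2(\omega)}$ vanishes or is $O(\eps)$ because $\langle\psi_r,\partial_r\psi_r\rangle=\tfrac12\partial_r\|\psi_r\|^2=0$, and the last term is $\le K\eps^2 f^2$ by the uniform bound Lemma~\ref{lem-1}(e). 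Collecting: $\int_\omega(|\partial_s u|^2+\tfrac{1}{\eps^2s^2}|\nabla_t u|^2)\dd t \le |f'(s)|^2 - \dfrac{N_\omega}{\eps s}f(s)^2 + C f(s)^2$, i.e. the integrand of $\langle f, A_\eps f\rangle$ plus a bounded term. Comparing the $n^2-2n$ centrifugal term: here I must be slightly careful — in $p^+_\eps$ the weight is $s^n$, whereas $A_\eps$ in $L^2(0,\infty)$ (Lebesgue) has the $\frac{n^2-2n}{4s^2}$ potential; the unitary map $f\mapsto s^{n/2}f$ converts the weighted form $\int_0^a(|f'|^2 + V f^2)s^n\dd s$ into $\int_0^a(|g'|^2+(\frac{n^2-2n}{4s^2}+V)g^2)\dd s$, precisely the form of $M_{\eps,a}$ with $V=-N_\omega/(\eps s)$. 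So after this change of unknown and pulling out the prefactor $(1\pm c\eps)$ and adjusting $\eps'=(1+c\eps)^2\eps$ to soak up the $(1\pm c\eps)$ discrepancy between the kinetic and potential prefactors, the Rayleigh quotient of $u$ is bounded by $(1+c\eps)\dfrac{\langle g,(M_{\eps',a}+c')g\rangle}{\|g\|^2}$ with $g=s^{n/2}f$, and choosing $f$ in the span of the first $j$ eigenfunctions of $M_{\eps',a}$ (which lie in $H^1_0(0,a)$, matching $D_0(p_\eps)$ after the substitution $u=s^{?}g(s)\psi_{\eps s}(t)$ — note $s^{n/2}f$ vanishing near $0$ and $a$ gives admissible $u$) yields $\sup\le (1+c\eps)(E_j(M_{\eps',a})+c')$, hence $\Lambda_j(T_\eps)\le(1+c\eps)E_j(M_{\eps',a})+c''$.

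The main obstacle I expect is bookkeeping the two slightly different prefactors $(1+c\eps)$ (in front of the kinetic terms) and $\frac{1}{1+c\eps}$ (in front of the boundary term) in $p^+_\eps$ so that they combine into a \emph{single} rescaling of the operator, namely replacing $\eps$ by $(1+c\eps)^2\eps$ in $M$; this is exactly why the statement has $M_{(1+c\eps)^2\eps,a}$ rather than $M_{\eps,a}$. One has to factor $p^+_\eps(u,u)=(1+c\eps)\eps^n\big[\int s^n(|\partial_s u|^2+\tfrac1{\eps^2s^2}|\nabla_t u|^2)\dd t\,\dd s - \tfrac1{(1+c\eps)^2}\eps^{-1}\int s^{n-1}u^2\dd\tau\,\dd s\big]$ and recognize $\tfrac1{(1+c\eps)^2\eps}=\tfrac1{\eps'}$, so the bracket is the form of $M_{\eps',a}$ (after the $s^{n/2}$ substitution and up to the bounded $\varphi$- and $K$-terms which give $c'$). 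The secondary technical points — verifying that $f(s)\psi_{\eps s}(t)$ is genuinely in $C^\infty(\overline\Pi)$ with $s$-support compactly inside $(0,a)$ (it is, since $\psi_r$ depends smoothly on $r$ by Lemma~\ref{lem-1}(b) and $f\in C^\infty_c(0,a)$ may be taken, then extended by density), and that the cross term really is $O(\eps)\cdot|f'||f|$ which is controlled by $\tfrac{\eps}{2}(|f'|^2+f^2)$ and absorbed — are routine given Lemma~\ref{lem-1}(b), (e) and the boundedness of $\varphi$.
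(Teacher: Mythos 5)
Your approach is essentially the paper's: the same change of unknown $u\mapsto s^{-n/2}u$ to remove the $s^n$ weight, the same separated trial functions $f(s)\psi_r(t)$ with $\psi_r$ the first eigenfunction of $B_r$, the same use of Lemma~\ref{lem-1}(b),(c),(e) to control the cross term, the $|\partial_r\psi_r|^2$ term, and the $O(r^2)$ remainder in $E_1(B_r)$. The only remark is that you should be careful to evaluate $\psi_r$ at the \emph{adjusted} Robin parameter $r=\eps\rho(s,\eps)$ with $\rho(s,\eps)=s/(1+c\eps)^2$, not at $r=\eps s$ as your first paragraph states. The point is that after pulling out the factor $(1+c\eps)$, the curly bracket in $r^+_\eps$ is exactly $b_{\eps s/(1+c\eps)^2}(u(s,\cdot),u(s,\cdot))$, and plugging in $\psi_{\eps s}$ rather than $\psi_{\eps s/(1+c\eps)^2}$ gives $b_{\eps s/(1+c\eps)^2}(\psi_{\eps s},\psi_{\eps s})=E_1(B_{\eps s})+\eps s\big(1-(1+c\eps)^{-2}\big)\int_{\partial\omega}\psi_{\eps s}^2\dd\tau$; the extra term, divided by $\eps^2 s^2$, is of size $O(1/s)$ with no $\eps$-gain, and $\int_0^a f^2/s\,\dd s$ is \emph{not} $O(\|f\|_{L^2}^2)$ — so it would not land in the constant $c'$ without an additional Hardy-type absorption that you do not supply. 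Your last paragraph shows you understand the need to carry the $(1+c\eps)^{-2}$ into the effective operator $M_{(1+c\eps)^2\eps,a}$; the same factor must go into the trial $\psi$. (A second, minor point: rather than taking $f$ to be the literal eigenfunctions of $M_{\eps',a}$ and appealing to density, it is cleaner to let $f$ range over $C_c^\infty(0,a)$, a core of $M_{\eps',a}$, so the resulting $u=f\psi$ lies directly in $D_0(p_\eps)$ and the min-max over this core gives $E_j(M_{\eps',a})$ exactly.)
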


\begin{proof}
Take first $c$ and $\eps_0$ as in Proposition~\ref{prop7}. Define a unitary transform
\[
\V: L^2(\Pi)\to L^2(\Pi, \eps^n s^{n} \dd s\, \dd t), \quad
(\V\, u)(s,t) = \eps^{-\frac{n}{2}} s^{-\frac{n}{2}}\,  u(s,t),
\] and consider the symmetric bilinear form $r_\eps^+(u,u) := p_\eps^+(\V\, u,\V\, u)$.
One easily sees that for any $u\in D_0(r_\eps^+):=\V^{-1} D_0(p_\eps)\equiv D_0(p_\eps)$
there holds
\begin{align*}
r_\eps^+(u,u) &=(1+c\eps)\int_0^a \int_{\omega} \left( \Big(\partial_s u-\frac{n  u}{2s}\Big)^2
+ \frac{1}{\eps^2 s^2}\, |\nabla_t u|^2 \right) \, \dd t \dd s \\
& \quad - \dfrac{1}{(1+c\eps)\eps}\int_0^a  \frac{1}{s}\,   \int_{\partial\omega} u^2\,  \dd\tau(t) \dd s.
\end{align*}
The substitution $u\mapsto \V u$ into the upper bound of Proposition~\ref{prop7} shows that
\begin{equation*}
\Lambda_j(T_\eps) \le \inf_{\substack{S\subset D_0(r^+_\eps) \\ \dim S =j} } \, \sup_{\substack{u\in S\\ u\neq 0}}\,  \frac{r^+_\eps(u,u)}{\quad \|u\|_{L^2(\Pi)}^2}.
\end{equation*} 
Using the density, on the right-hand side one can replace $r^+_\eps$ and $D_0(r^+_\eps)$ by the closure $\overline{r^+_\eps}$
and any dense subset $D\subset D(\overline{r^+_\eps})$. By Lemma~\ref{lem-cyl2} we can take
\[
D=\big\{u\in H^1(\Pi): \text{ there exists $[b,c]\subset(0,a)$ such that $u(x)=0$ for $x_1\notin[b,c]$}\big\},
\]
and we keep the symbol $r^+_\eps$ for $\overline{r^+_\eps}$ on $D$, as it is given by the same expression.
Therefore,
\begin{equation}  \label{1-upperb0}
	\Lambda_j(T_\eps) \le \inf_{\substack{S\subset D\\ \dim S =j} } \, \sup_{\substack{u\in S\\ u\neq 0}}\,  \frac{r^+_\eps(u,u)}{\quad \|u\|_{L^2(\Pi)}^2},
\end{equation}

Then integration by parts shows that for $u\in D$ one has
\begin{equation} \label{per-partes}
\int_\omega\int_0^a \frac{u\partial_s u}{s}\,    \dd s = \int_\omega\int_0^a \frac{u^2}{2s^2}\, \dd s, 
\end{equation}
which implies  
\begin{equation}
	  \label{reps1}
\begin{aligned}
r_\eps^+(u,u) & =  (1+c\eps)\int_0^a \int_{\omega} \Bigg(  \Big(|\partial_s u|^2 +\frac{n^2-2n}{4 s^2}\, u^2 \Big)+ \frac{1}{\eps^2 s^{2}}\, |\nabla_t u|^2 \Bigg) \dd t \dd s\\
& \quad - \dfrac{1}{(1+c\eps)\eps}\int_0^a  \frac{1}{s}\,   \int_{\partial\omega} u^2\,  \dd\tau \dd s\\
&=(1+c\eps)\bigg[\int_0^a \int_{\omega}  \Big(|\partial_s u|^2 +\frac{n^2-2n}{4 s^2}\, u^2 \Big)\dd t \dd s\\
&\quad + \int_0^a \frac{1}{\eps^2 s^2} \Big\{
\int_\omega |\nabla_t u|^2  \dd t - \frac{\eps s}{(1+c\eps)^2} \int_{\partial\omega} u^2\dd\tau
\Big\}\dd s\bigg].
\end{aligned}
\end{equation}
Note that the functional in the curly brackets is the bilinear form $b_{\eps\rho(s,\eps)}$ as defined in Subsection \ref{sec-1d} with $\rho(s,\eps) =s (1+c\eps)^{-2}$.
Let $\psi\equiv \psi_{\eps \rho(s,\eps)}$  be the positive normalized eigenfunction of $B_{\eps\rho(s,\eps)}$
for $E_1(B_{\eps\rho(s,\eps)})$. By Lemma \ref{lem-1}, for any $\eps>0$
the map $\R\ni s\mapsto \psi_{\eps\rho(s,\eps)}\in L^2(\omega)$ is $C^\infty$. If $f\in C^\infty_c(0,a)$, then
also $\R\ni s\mapsto f(s)\psi_{\eps\rho(s,\eps)}\in L^2(\omega)$ is $C^\infty$, and the derivative (which is smooth and with compact support)
coincides with the weak derivative in $\Pi$ with respect to $s$. It follows that the function $(s,t)\mapsto f(s)\psi_{\eps\rho(s,\eps)}(t)$
belongs to the above subspace $D$. Moreover, if $S \subset C_c^\infty(0,a)$ is a $j$-dimensional subspace, then
\begin{equation*} %\label{eq-S}
\Tilde S = \big\{ u:\Pi\to \R:\, u(s,t)= f(s)\, \psi_{\eps\rho(s,\eps)}(t), \, f\in S\big\} .
\end{equation*} 
is a $j$-dimensional subspace of $D$.
For any $u\in \Tilde S$  one has $\|u\|_{L^2(\Pi)} = \|f\|_{L^2(0,a)}$ by Fubini's theorem and
\[
\int_{\omega} |\nabla_t u|^2 \, \dd t - \frac{\eps s}{(1+c\eps)^2}\, \int_{\partial\omega} u^2\,  \dd\tau  = E_1(B_{\eps\rho(s,\eps)}) f(s)^2
\]
due to the spectral theorem. Furthermore,
\begin{align*}
 \int_0^a\int_{\omega} |\partial_s u|^2\dd t\dd s&=
 \int_0^a \int_{\omega} \big|f'(s) \psi_{\eps\rho(s,\eps)}(t)+f(s)\partial_s \psi_{\eps\rho(s,\eps)}(t)\big|^2\dd t\dd s\\
 &=\int_0^a \int_{\omega}
 \Big[f'(s)^2 \psi_{\eps\rho(s,\eps)}(t)^2+f(s)^2 |\partial_s \psi_{\eps\rho(s,\eps)}(t)|^2\\
 &\quad +f(s)f'(s) \cdot 2 \psi_{\eps\rho(s,\eps)}(t)\,\partial_s \psi_{\eps\rho(s,\eps)}(t)\Big]\dd t\dd s,
\end{align*}
while
\[
\int_{\omega} 2 \psi_{\eps\rho(s,\eps)}(t)\partial_s \psi_{\eps\rho(s,\eps)}(t)\dd t
=\int_{\omega} \partial_s \big| \psi_{\eps\rho(s,\eps)}(t)\big|^2\dd t=\partial_s \|\psi_{\eps\rho(s,\eps)}\|^2_{L^2(\omega)}=\partial_s1=0.
\]
Therefore,
\begin{multline*} 
 \int_0^a\int_{\omega}\Big( |\partial_s u|^2 +\frac{n^2 -2n}{4 s^2}\, u^2 \Big) \dd t \dd s
= \int_0^a \Big[  |f'|^2 + \Big(\frac{n^2-2n}{4s^2} + \int_{\omega} \!\! |\partial_s \psi_{\eps \rho(s,\eps)}|^2 \dd t\Big) f^2 \Big] \dd s.
\end{multline*}

The substitution into \eqref{reps1} shows that for any $u\in \Tilde S$ there holds
\[
r^+_\eps(u,u)= (1+c\eps)\int_0^a
\Big[  |f'|^2 + \Big(\frac{n^2-2n}{4s^2} + \int_{\omega} \!\! |\partial_s \psi_{\eps \rho(s,\eps)}|^2 \dd t+\frac{E_1(B_{\eps\rho(s,\eps)})}{\eps^2 s^2}\Big) f^2 \Big] \dd s.
\]
By the estimate \eqref{eq-kp} in Lemma \ref{lem-1} we can control the term with $\partial_s \psi$. Namely, for $s\in(0,a)$ and $\eps\in(0,\eps_0)$ the values of $\eps\rho(s,\eps)$ are contained in some bounded interval, and then one can find some $K>0$ such that
\begin{align*} %\label{eq-kp2} 
\int_{\omega} \big|\partial_s \psi_{\eps \rho(s,\eps)}(t)\big|^2\, \dd t&
=\eps^2\int_{\omega} \Big( \partial_\rho \psi_{\rho}(t) \big|_{\rho= \eps\rho(s,\eps)} \, \dfrac{\partial \rho(s,\eps)}{\partial s}\Big)^2 \, \dd t\\
&=\dfrac{\eps^2}{(1+c\eps)^4}\int_{\omega} \Big( \partial_\rho \psi_{\rho}(t) \big|_{\rho= \eps \rho(s,\eps)}\Big)^2 \, \dd t\le K\eps^2.
\end{align*}
Hence, for all $u\in\Tilde S$ and $\eps\in(0,\eps_0)$ one has
\[
r_\eps^+(u,u)  \leq (1+c\eps)  \int_0^a \left[ |f'|^2 +\left(\frac{n^2-2n}{4s^2} +K\eps^2 + \frac{E_1(B_{\eps\rho(s,\eps)})}{\eps^2s^{2 }} \right) f^2  \right] \dd s.
\]
Now we apply Lemma \ref{lem-1}(a,c) to the eigenvalue $E_1(B_{\eps\rho(s,\eps)})$: there exists $c_0>0$ such that for all $s>0$ and $\eps>0$ there holds
\begin{align*}
\frac{E_1(B_{\eps\rho(s,\eps)})}{\eps^2 s^{2}}
&\leq \frac{-N_\omega\, \eps\, \rho(s,\eps) +c_0\, \eps^2 \rho^2(s,\eps) }{\eps^2 s^{2}}=-\frac{N_\omega}{\, \eps\, s}\cdot \dfrac{1}{(1+c\eps)^2} + \frac{c_0}{(1+c\eps)^4}.
\end{align*}
Hence, for all $u\in\Tilde S$ and $\eps\in(0,\eps_0)$ we obtain
\begin{align*} \label{b+upperb}
\frac{r^+_\eps(u,u)}{ \quad \|u\|_{L^2(\Pi)}^2}  &\ \leq\   \frac{(1+c\eps) \displaystyle\int_0^a \left[ |f'|^2 +\left(\frac{n^2-2n}{4s^2} -\frac{N_\omega}{\, \eps\, s}\cdot \dfrac{1}{(1+c\eps)^2} \right) f^2  \right] \dd s}{\|f\|_{L^{2}(0,a)}^2} \\
&\qquad  + \frac{c_0}{(1+c\eps)^3} +(1+c\eps)K\eps^2\\
&=(1+c\eps) \dfrac{\langle f,M_{(1+c\eps)^2\eps,a}f\rangle_{L^2(0,a)}}{\|f\|_{L^{2}(0,a)}^2}
+ \frac{c_0}{(1+c\eps)^3} +(1+c\eps)K\eps^2\\
&\le (1+c\eps) \dfrac{\langle f,M_{(1+c\eps)^2\eps,a}f\rangle_{L^2(0,a)}}{\|f\|_{L^{2}(0,a)}^2} +c'
\text{ for } c':=c_0+(1+c\eps_0)^2K\eps_0^2.
\end{align*}
The constants $c,c',\eps_0$ are independent of $j$ and $S$. By \eqref{1-upperb0},
for any $j\in \NN$ and $\eps\in(0,\eps_0)$ there holds
\begin{align*}
\Lambda_j(T_\eps)&\le 
\inf_{\substack{S\subset D \\ \dim  S =j} } \, \sup_{\substack{u\in S\\ u\neq 0}}\,  \frac{r^+_\eps(u,u)}{\quad \|u\|_{L^2(\Pi)}^2}
\le
\inf_{\substack{S\subset C_c^\infty(0,a) \\ {\rm dim\, } S =j} } \, \sup_{\substack{u\in \Tilde S\\ u\neq 0}}\,  \frac{r^+_\eps(u,u)}{\quad \|u\|_{L^2(\Pi)}^2}\\
&\le (1+c\eps) \inf_{\substack{S\subset C_c^\infty(0,a) \\ \dim S =j} } \, \sup_{\substack{f\in  S\\ f\neq 0}}\dfrac{\langle f,M_{(1+c\eps)^2\eps,a}f\rangle_{L^2(0,a)}}{\|f\|_{L^{2}(0,a)}^2}
+c'\\
&=(1+c\eps)E_j(M_{(1+c\eps)^2\eps,a})+ c'.\qedhere
\end{align*}
\end{proof}

\begin{cor}\label{lem7a}
For any $j\in\NN$ there exist $k>0$ and $\eps_0>0$ such that
\[
\Lambda_j(T_\eps)\le -\dfrac{N_\omega^2}{(2j+n-2)^2\eps^2}+\dfrac{k}{\eps} \text{ for all $\eps\in(0,\eps_0)$.}
\]
\end{cor}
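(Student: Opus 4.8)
The plan is to chain together the three one-dimensional results already at our disposal — Lemma~\ref{lem6}, Lemma~\ref{lem7} and the explicit eigenvalue formula \eqref{1d EV} — and then perform one elementary expansion. Indeed, Lemma~\ref{lem6} has already done the heavy lifting (the asymptotic separation of variables and the reduction to $M_{\eps',a}$); what remains is bookkeeping.

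Fix $j\in\NN$. First I would invoke Lemma~\ref{lem6} to obtain constants $c,c',\eps_0>0$, with $c\eps_0<1$, such that
\[
\Lambda_j(T_\eps)\le(1+c\eps)\,E_j\big(M_{(1+c\eps)^2\eps,\,a}\big)+c'\qquad\text{for all }\eps\in(0,\eps_0).
\]
Setting $\eps':=(1+c\eps)^2\eps$, one has $\eps'\to0^+$ as $\eps\to0^+$, so after possibly shrinking $\eps_0$ I may assume $\eps'$ is below the threshold of Lemma~\ref{lem7} applied with $b=a$ (which is fixed throughout the text). That lemma then gives $E_j(M_{\eps',a})\le E_j(A_{\eps'})+K$ with $K>0$ independent of $\eps$, and \eqref{1d EV} supplies $E_j(A_{\eps'})=-\dfrac{N_\omega^2}{(2j+n-2)^2(1+c\eps)^4\eps^2}$.

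Stringing these inequalities together yields, for $\eps\in(0,\eps_0)$,
\[
\Lambda_j(T_\eps)\le-\frac{N_\omega^2}{(2j+n-2)^2(1+c\eps)^3\eps^2}+(1+c\eps_0)K+c'.
\]
The only remaining computation is the routine estimate $(1+c\eps)^{-3}\ge1-3c\eps$, valid for all $\eps\ge0$, after which the leading term is at most $-\dfrac{N_\omega^2}{(2j+n-2)^2\eps^2}+\dfrac{3cN_\omega^2}{(2j+n-2)^2\eps}$, while the fixed constant $(1+c\eps_0)K+c'$ is absorbed into $\tfrac1\eps$ once $\eps_0$ is taken small enough. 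One then reads off, e.g., $k:=\dfrac{3cN_\omega^2}{(2j+n-2)^2}+1$.

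I do not expect any genuine obstacle here. The two points worth a moment's attention are (i) that $c$, $c'$ and $K$ are truly independent of $\eps$ (they are, by Lemmas~\ref{lem6} and~\ref{lem7}), and (ii) that the several "$\eps$ sufficiently small" hypotheses coming from those lemmas can be imposed simultaneously — which is automatic, since each is of the form $\eps<(\text{positive threshold})$ and $a$ (hence the threshold in Lemma~\ref{lem7}) is fixed.
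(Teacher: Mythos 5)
Your proof is correct and follows the same route as the paper: invoke Lemma~\ref{lem6}, then Lemma~\ref{lem7} (with the substituted parameter $\eps'=(1+c\eps)^2\eps$) together with the explicit formula~\eqref{1d EV}, and finish with an elementary expansion of $(1+c\eps)^{-3}$. The paper's own proof compresses that last expansion into ``the substitution into Lemma~\ref{lem6} gives the result,'' whereas you spell it out; otherwise the arguments coincide.
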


\begin{proof}
By  Lemma~\ref{lem7} for any fixed $c>0$ and $j\in\NN$ we can choose $K'>0$ such that
	\[
	E_j(M_{(1+c\eps)^2\eps,a})\le E_j(A_{(1+c\eps)^2\eps})+K'\equiv -\dfrac{N_\omega^2}{(2j+n-2)^2 \eps^2} \cdot  \Big(\frac{1}{1+c\eps}\Big)^4+K'
	\]
	if $\eps$ is small enough. The substitution into  Lemma~\ref{lem6} gives
	the result.
\end{proof}

\subsection{Lower bound for the eigenvalues of $T_\eps$} 

The lower bound for the eigenvalues of $T_\eps$ is also obtained using a comparison with the operators $M_{\eps',a}$
but requires more work.

\begin{lemma}\label{lem9}
Let $j\in\NN$, then there exist $\eps_0>0$ and  $k'>0$ such that 
\[
\Lambda_j(T_{\eps})\ge -\dfrac{N_\omega^2}{(2j+n-2)^2\eps^2}-\dfrac{k'}{\eps} \text{ for all $\eps\in(0,\eps_0)$.}
\]
\end{lemma}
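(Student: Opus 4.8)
The plan is to mirror the upper-bound argument from Lemma~\ref{lem6} and Corollary~\ref{lem7a}, but running all inequalities in the opposite direction: starting from the lower bound $p^-_\eps$ in Proposition~\ref{prop7}, I want to produce a one-dimensional model operator whose $j$th eigenvalue bounds $\Lambda_j(T_\eps)$ from below up to an $O(1/\eps)$ error. First I would apply the unitary transform $\V$ and the integration-by-parts identity \eqref{per-partes} to rewrite $r^-_\eps(u,u):=p^-_\eps(\V u,\V u)$ in the form analogous to \eqref{reps1}, namely $(1-c\eps)$ times a one-dimensional Dirichlet-type quadratic form with potential $\frac{n^2-2n}{4s^2}$ plus, for each fixed $s$, the curly-bracket functional $\int_\omega|\nabla_t u|^2\dd t-\frac{\eps s}{(1-c\eps)^{-2}}\int_{\partial\omega}u^2\dd\tau$, which is $b_{\eps\rho^-(s,\eps)}$ with $\rho^-(s,\eps)=s(1-c\eps)^2$ (the sign of $c\eps$ is flipped relative to the upper bound). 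The crucial point is that now I need a \emph{lower} bound for this transverse form valid on \emph{all} $u(s,\cdot)\in H^1(\omega)$, not just on the one-dimensional eigenspace, so the correct tool is the spectral bound $b_{r}(g,g)\ge E_1(B_r)\|g\|^2_{L^2(\omega)}$ applied slicewise, together with Lemma~\ref{lem-1}(c) giving $E_1(B_r)\ge -N_\omega r - C r^2$ for $r$ in a bounded interval.

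The main obstacle — and the reason this direction is harder, as the paper itself flags — is that applying the slicewise bound $b_{\eps\rho^-(s,\eps)}(u(s,\cdot),u(s,\cdot))\ge E_1(B_{\eps\rho^-(s,\eps)})\int_\omega u(s,t)^2\dd t$ and then integrating in $s$ discards all the transverse kinetic energy $\int_\omega|\nabla_t u|^2\dd t$ in the directions orthogonal to $\psi_{\eps\rho^-(s,\eps)}$, and, more delicately, the $|\partial_s u|^2$ term does not cleanly separate: writing $u(s,t)=f(s)\psi_{\eps\rho^-(s,\eps)}(t)+u^\perp(s,t)$ with $u^\perp(s,\cdot)\perp\psi$, the cross terms in $\int_\omega|\partial_s u|^2\dd t$ between $\partial_s f\cdot\psi$ and $\partial_s u^\perp$ (and between $f\partial_s\psi$ and $u^\perp$) must be shown to be harmless. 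The standard Born–Oppenheimer device handles this: for the transverse part one uses that on $\psi^\perp$ the form $b_{\eps\rho^-(s,\eps)}$ is bounded below by $E_2(B_{\eps\rho^-(s,\eps)})$, which by Lemma~\ref{lem-1}(d) stays bounded (indeed positive) as $\eps\to0$, so the factor $\frac{1}{\eps^2 s^2}$ multiplying it produces a large \emph{positive} contribution that dominates any negative cross-terms for small $\eps$; meanwhile Lemma~\ref{lem-1}(e) controls $\|\partial_s\psi_{\eps\rho^-(s,\eps)}\|_{L^2(\omega)}$ so that all $s$-derivative cross-terms are $O(1/\eps)$ after the change of variables. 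Care is needed near $s=0$, but functions in $D_0(p_\eps)$ vanish for $s$ outside a compact subinterval $[b,c]\subset(0,a)$, so $1/s$ and $1/s^2$ are bounded on the relevant region (the bounds just cannot be taken uniform in the subinterval, which is why one works with the variational eigenvalues $\Lambda_j$ computed on $D_0(p_\eps)$ and only passes to a limit at the end).

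Concretely, the steps I would carry out are: (1) reduce, via $\V$ and \eqref{per-partes}, to estimating $\inf\sup r^-_\eps(u,u)/\|u\|^2_{L^2(\Pi)}$ over $j$-dimensional subspaces of the dense set $D$ from Lemma~\ref{lem6}, as in \eqref{1-upperb0}; (2) for fixed $u\in D$, decompose $u(s,\cdot)=f(s)\psi_{\eps\rho^-(s,\eps)}+u^\perp(s,\cdot)$ and expand $r^-_\eps(u,u)$, using slicewise spectral bounds $E_1$ on the $f$-component and $E_2$ on $u^\perp$, plus Cauchy–Schwarz on the cross-terms with weights chosen so the $\frac{1}{\eps^2 s^2}E_2$-term absorbs them for $\eps<\eps_0$; (3) conclude $r^-_\eps(u,u)\ge (1-c\eps)\langle f, M_{(1-c\eps)^2\eps,a}f\rangle_{L^2(0,a)} - (k'/\eps)\|f\|^2_{L^2(0,a)}$ with $\|u\|_{L^2(\Pi)}\ge\|f\|_{L^2(0,a)}$ (Fubini), noting that $f$ is obtained from $u$ by $f(s)=\int_\omega u(s,t)\psi_{\eps\rho^-(s,\eps)}(t)\dd t$ and the map $u\mapsto f$ is linear and injective on subspaces transverse to the kernel, so dimension is preserved, and (by Lemma~\ref{lem-cyl2}-type density plus the smoothness in Lemma~\ref{lem-1}(b)) the resulting $f$ lies in the form domain of $M_{(1-c\eps)^2\eps,a}$; (4) take the infimum over subspaces to get $\Lambda_j(T_\eps)\ge(1-c\eps)E_j(M_{(1-c\eps)^2\eps,a})-k'/\eps$, then apply \eqref{eq-1dd} ($E_j(M_{\lambda,b})\ge E_j(A_\lambda)$) and the explicit formula \eqref{1d EV} to obtain $\Lambda_j(T_\eps)\ge -\frac{N_\omega^2}{(2j+n-2)^2\eps^2}(1-c\eps)^{-4}(1-c\eps) - k'/\eps$, which after adjusting $k'$ and $\eps_0$ is exactly the claimed bound.
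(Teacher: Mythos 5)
Your proposal follows the same route as the paper: pass to $r^-_\eps$ via $\V$ and \eqref{per-partes}, decompose $u=f\psi_{\eps\rho(s,\eps)}+w$ with $w(s,\cdot)\perp\psi_{\eps\rho(s,\eps)}$, use the slicewise spectral bound with $E_1$ on the $f$-component and $E_2$ on $w$ (with Lemma~\ref{lem-1}(c),(d)), control the $\partial_s$-cross-terms via weighted Cauchy--Schwarz and Lemma~\ref{lem-1}(e), and compare with $M_{\cdot\,,a}$ and then $A_\lambda$ through \eqref{eq-1dd} and \eqref{1d EV}. This is exactly what the paper does.

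Two small points where your sketch is a bit loose but the paper is more careful. First, the dimension step: the phrase ``the map $u\mapsto f$ is linear and injective on subspaces transverse to the kernel, so dimension is preserved'' does not by itself yield the min-max lower bound, because the infimum in $\Lambda_j(T_\eps)$ runs over \emph{all} $j$-dimensional $S\subset D_0(r^-_\eps)$, including those that intersect the kernel of $u\mapsto f$; for such $S$ the image in $L^2(0,a)$ has dimension $<j$. The paper sidesteps this by promoting $u\mapsto(f,w)$ to a unitary $\Psi:L^2(\Pi)\to L^2(0,a)\oplus\cH$ and comparing with the operator $H_\eps=M_{(1-b\eps)^2\eps,a}\oplus 0$ on the direct sum; then $\Lambda_j(H_\eps)=E_j(M_{(1-b\eps)^2\eps,a})$ once one checks $E_j(M_{(1-b\eps)^2\eps,a})<0$. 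Your plan can be repaired without this device (if $S$ meets the kernel then some Rayleigh quotient in $S$ is $\ge 0$, which already dominates the negative target), but that must be said explicitly rather than asserted as ``dimension is preserved''. Second, minor bookkeeping: the paper's $\rho(s,\eps)=s(1-c\eps)^{-2}$, not $s(1-c\eps)^2$ as you wrote (the resulting $M_{(1-c\eps)^{2}\eps,a}$ in your step~(4) is right), and the paper in fact discards the positive $\tfrac{C_0}{\eps^2s^2}\|w\|^2$ term and absorbs the surviving $-\|w\|^2$ into the additive shift $c'\|u\|^2$, rather than using $E_2/(\eps^2s^2)$ to dominate the cross terms --- your mechanism also works but is not quite the paper's.
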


\begin{proof}
Take $c$ and $\eps_0$ as in Proposition~\ref{prop7}. For $\eps\in(0,\eps_0)$ consider again the unitary transform $\V: L^2(\Pi)\to L^2(\Pi,\eps^n s^{n} \dd s\, \dd t)$, $(\V\, u)(s,t) = \eps^{-\frac{n}{2}} s^{-\frac{n}{2}}\,  u(s,t)$,
and the symmetric bilinear form $r_\eps^-(u,u) := p_\eps^-(\V\, u,\V\, u)$. The
reparametri\-zation $u\mapsto\V u$ in the lower bound of Proposition~\ref{prop7} leads to
\begin{equation}
  \label{low-eq00}
\Lambda_j(T_\eps)\ge  \inf_{\substack{S\subset D_0(r^-_\eps) \\ \dim S =j} } \, \sup_{\substack{u\in S\\ u\neq 0}}\,  \frac{r^-_\eps(u,u)}{\|u\|_{L^2(\Pi)}^2}, \quad D_0(r^-_\eps):=\V^{-1} D_0(p_{\eps})\equiv D_0(p_{\eps}).
\end{equation}
The substitution of $\V u$ into $p_\eps^-$ and the partial integration \eqref{per-partes} show that
\begin{equation}
   \label{eq-b}
\begin{aligned}
r_\eps^-(u,u) & =  (1-c\eps)\int_0^a \int_{\omega} \Bigg(  \Big(|\partial_s u|^2 +\frac{n^2-2n}{4 s^2}\, u^2 \Big)+ \frac{1}{\eps^2 s^{2}}\, |\nabla_t u|^2 \Bigg) \dd t \dd s\\
& \quad - \dfrac{1}{(1-c\eps)\eps}\int_0^a  \frac{1}{s}\,   \int_{\partial\omega} u^2\,  \dd\tau \dd s\\
&=(1-c\eps)\bigg[\int_0^a \int_{\omega}  \Big(|\partial_s u|^2 +\frac{n^2-2n}{4 s^2}\, u^2 \Big)\dd t \dd s\\
&\quad + \int_0^a \frac{1}{\eps^2 s^2} \Big\{
\int_\omega |\nabla_t u|^2  \dd t - \frac{\eps s}{(1-c\eps)^2} \int_{\partial\omega} u^2\dd\tau
\Big\}\dd s\bigg],\\
\rho(s,\eps)&:= \frac{s}{(1-c\eps)^2}\in (0,m), \quad m:=\frac{a}{(1-c \eps_0)^2}, \quad  \eps\in(0,\eps_0).
\end{aligned}
\end{equation}
The expression in the curly brackets is the bilinear form $b_{\eps\rho(s,\eps)}$ for the Robin Laplacian $B_{\eps\rho(s,\eps)}$ on $\omega$ as discussed in Subsection \ref{sec-1d}. Denote by $\psi_{\eps\rho(s,\eps)}$ the positive eigenfunction for $E_1(B_{\eps\rho(s,\eps)})$ with $\|\psi_{\eps\rho(s,\eps)}\|_{L^2(\omega)}=1$,
then $s\mapsto \psi_{\eps\rho(s,\eps)}$ is $C^\infty$ by Lemma~\ref{lem-1}. We decompose each $u\in D_0(r^-_\eps)$ as
\[
u = v + w  \ \text{ with } \ 
v(s,t)=  \psi_{\eps\rho(s,\eps)}(t) \, f(s), \quad  f(s) := \int_{\omega} u(s,t)\, \psi_{\eps\rho(s,\eps)}(t)\, \dd t. 
\]
By construction we have $f\in C^\infty_c(0,a)$ and, furthermore,
\begin{gather} \label{orth}
\int_{\omega} w(s,t)\, \psi_{\eps\rho(s,\eps)}(t)\, \dd t = 0 \quad \text{ for any $s\in (0,a)$,}\\
\|f\|_{L^2(0,a)}=\|v\|_{L^2(\Pi)}, \quad
\|f\|^2_{L^2(0,a)} + \|w\|^2_{L^2(\Pi)}=\|u\|^2_{L^2(\Pi)}.  \label{eq-normuw}
\end{gather}
The spectral theorem applied to $B_{\eps\rho(s,\eps)}$ implies that for any $u\in D_0(r^-_\eps)$ there holds
\begin{multline}
   \label{eq-utw}
\int_{\omega} |\nabla_t u(s,t)|^2 \, \dd t - \eps\rho(s,\eps)  \int_{\partial\omega} u(s,t)^2\,  \dd\tau(t)\\
\ge E_1(B_{\eps\rho(s,\eps)})\, f(s)^2 +E_2(B_{\eps\rho(s,\eps)})\, \int_{\omega} w(s,t)^2\, \dd t \, .
\end{multline}
By Lemma \ref{lem-1}(c) one can find a constant $c_1>0$ such that
\[
E_1(B_{x})= -N_\omega\, x+O(x^2)> -\dfrac{N_\omega\, x}{1-c_1 x} \quad \text{for all sufficiently small $x>0$.}
\]
We have $\eps \rho (s,\eps)\in[0,m\eps_0]$. By adjusting the value of $\eps_0$ we conclude that there exists $c_2>0$ such that for all $\eps\in(0,\eps_0)$ and $s\in(0,a)$ one has
\begin{align*}
\dfrac{E_1(B_{\eps \rho(s,\eps)})}{\eps^2 s^{2}}&\ge -\dfrac{N_\omega\, \eps \rho(s,\eps)}{\eps^2 s^{2}\big(1-c_1\eps \rho(s,\eps)\big)}= -\dfrac{N_\omega\, \eps s}{(1-c\eps)^2\eps^2 s^{2}\Big(1- \dfrac{c_1\eps s\mathstrut}{(1-c\eps)^2}\Big)}\\
&=-\dfrac{N_\omega}{\eps s\big( (1-c\eps)^2- c_1\eps s\big)}
\ge -\dfrac{N_\omega}{\eps s(1-c_2 \eps)}.
\end{align*}
%
%
%\ge -\dfrac{n}{\eps(1-c_1 \eps) s}.
%\]
By Lemma \ref{lem-1}(d) we can find $C_0>0$ such that $E_2(B_{x})= E_2^N+ o(1)\ge C_0$ for small $x>0$. Hence, if $\eps_0$ is sufficiently small, $s\in (0,a)$ and $\eps\in(0,\eps_0)$, then \eqref{eq-utw} implies
\[
\frac{1}{\eps^2 s^2}\left(\int_{\omega} |\nabla_t u|^2 \, \dd t - \rho(s,\eps) \int_{\partial\omega} u^2\,  \dd\tau \right) \, \geq \, -\dfrac{N_\omega\, }{\eps s(1-c_2 \eps) } f(s)^2 +\frac{C_{0}}{\eps^2 s^{2}}\, \int_{\omega} w^2\, \dd t \, ,
\]
which is valid for all $u\in D_0(r^-_\eps)$. The substitution of the last inequality into \eqref{eq-b} shows that for all $\eps\in(0,\eps_0)$ and $u\in D_0(r^-_\eps)$ there holds
\begin{align*}
r^-_{\eps}(u,u)  & \geq  (1-c\eps) \int_0^a \int_{\omega} \left( |\partial_s u|^2+\frac{n^2-2n}{4 s^2} \, u^2 \right) \dd t \dd s\\
&\quad  +  (1-c\eps)C_0 \int_0^a \int_{\omega}  \frac{w^2}{\eps^2 s^{2}}\, \dd t \dd s-
N_\omega\frac{1-c\eps}{1-c_2\eps} \int_0^a \dfrac{f^2}{\eps s} \, \dd s.
\end{align*}
To have a simpler expression choose a suitable $k>c$ and adjust $\eps_0$ so that for all $\eps\in(0,\eps_0)$ and $u\in D_0(r^-_\eps)$,
\begin{equation}
	\label{lowerb-b}
\begin{aligned}
	r^-_{\eps}(u,u)  & \geq  (1-k\eps) \int_0^a \int_{\omega} \left( |\partial_s u|^2+\frac{n^2-2n}{4 s^2} \, u^2 \right) \dd t \dd s\\
	&\quad  +  \dfrac{C_0}{2} \int_0^a \int_{\omega}  \frac{w^2}{\eps^2 s^{2}}\, \dd t \dd s-
	\dfrac{N_\omega}{1-k\eps} \int_0^a \dfrac{f^2}{\eps s} \, \dd s.
\end{aligned}
\end{equation}
For the sake of brevity we will denote 
\[
\psi := \psi_{\eps\rho(s,\eps)},
\quad
\psi_s:=\partial_s \psi, \quad v_s:=\partial_s v, \quad w_s:=\partial_s w\, .
\]
Let us study the first integral on the right hand side of \eqref{lowerb-b}. Using the orthogonality relations
\eqref{orth} we obtain
\begin{multline}
     \label{us}
\int_0^a\int_{\omega} \left( |\partial_s u|^2+\frac{n^2-2n}{4 s^2} \, u^2 \right) \dd t \dd s   = \int_0^a\!\! \int_{\omega} \Big( v_s^2+\frac{n^2-2n}{4 s^2} \, v^2 \Big) \dd t \dd s\\
+\int_0^a\!\! \int_{\omega} \left( w_s^2+\frac{n^2-2n}{4 s^2} \, w^2 \right) \dd t \dd s + 2 \int_0^a\!\! \int_{\omega} v_s\, w_s\, \dd t\, \dd s.
\end{multline}
We have 
\begin{align*}
\int_0^a \int_{\omega}  v_s^2\dd t\dd s&=\int_0^a \int_{\omega} |f'\psi+f\psi_s|^2\dd t\dd s\\
&=\int_0^a \int_{\omega} \big(|f'|^2\psi^2+f^2|\psi_s|^2 +2 ff' \psi_s \psi\big)\dd t\dd s.
\end{align*}
Due to the normalization of $\psi$,
\[
\int_{\omega} 2\psi \psi_s \dd t=\partial_s \int_{\omega} \psi^2\dd t=\partial_s 1=0,
\]
therefore,
\[
\int_0^a \int_{\omega}  v_s^2\dd t\dd s=\int_0^a \big(|f'|^2 + \|\psi_s\|^2_{L^2(\omega)} |f|^2\big)\dd s
\ge \int_0^a |f'|^2 \dd s
\]
and, consequently,
\begin{equation}
\int_0^a \int_{\omega} \Big( v_s^2+\frac{n^2-2n}{4 s^2} \, v^2 \Big) \dd t \dd s  \geq  \int_0^a \left[ \, |f'|^2 + \frac{n^2-2n}{4s^2}\, f^2 \right] \dd s. \label{vs-lowerb}
\end{equation}

In order to estimate the two last terms in \eqref{us} we note that  
\begin{equation} \label{crossed}
 2\int_0^a \int_{\omega} v_s\, w_s\, \dd t \dd s  =  2\int_0^a \int_{\omega} f'\, \psi\, w_s \, \dd t\dd s
 +2\int_0^a f \int_{\omega} \psi_s\, w_s\, \dd t\, \dd s
\end{equation}
and that, in view of \eqref{orth},
\[
\int_{\omega} (\psi w_s+\psi_s w)\dd t=\partial_s \int_{\omega} \psi w\dd t=\partial_s 0=0,
\quad
\int_{\omega} \psi\, w_s \, \dd t=-\int_{\omega}  \, \psi_s  w\, \dd t. 
\] 
Hence, using $|2xy|\le x^2+y^2$,
\begin{multline}
\Big | 2\int_0^a \int_{\omega} f'\, \psi\, w_s \, \dd t\dd s \Big |  = \Big | 2\int_0^a \int_{\omega} f'\, \psi_s\,w\,   \dd t\,\dd s\, \Big | \\
\leq \int_0^a \int_{\omega} \big(|f'|^2     \psi_s^2 + w^2\big)\, \dd t\dd s
=\int_0^a \|\psi_s\|^2_{L^2(\omega)} |f'|^2\dd s + \|w\|_{L^2(\Pi)}^2. \label{UB-est1}
\end{multline}
Similarly,
\begin{multline}
\Big |  2 \int_0^a f \int_{\omega} \psi_s\, w_s \dd t \dd s \Big |
=\Big |  2 \int_0^a \int_{\omega} f\cdot \dfrac{1}{\sqrt \eps}\psi_s\, \sqrt \eps w_s \dd t \dd s \Big |\\
\leq\int_0^a   \int_{\omega}  \Big(f^2\cdot \frac{1}{\eps} \psi_s^2 +\eps w_s^2\Big)\dd t\dd s
=\dfrac{1}{\eps}\int_0^a  f^2 \|\psi_s\|^2_{L^2(\omega)}\dd s + \eps \|w_s\|_{L^2(\Pi)}^2. \label{UB-est2}
\end{multline}
Now we represent
\[
\|\psi_s\|^2_{L^2(\omega)}=\int_{\omega} |\partial_s \psi_{\eps\rho(s,\eps)}(t)|^2\dd t
=\dfrac{\eps^2}{(1-c\eps)^4}\int_{\omega} \Big(\partial_\rho \psi_{\rho}(t)\Big|_{\rho=\eps\rho(s,\eps)}\Big)^2\dd t .
\]
As $\eps\rho(s,\eps)\in(0,\eps_0m)$ for all $s\in(0,a)$ and $\eps\in(0,\eps_0)$ we can use the estimate \eqref{eq-kp} of Lemma~\ref{lem-1}: there exists $K>0$ such that for all $\eps\in(0,\eps_0)$ and $ s\in(0,a)$ one has $\|\psi_s\|^2_{L^2(\omega)} \leq \, K\eps^2\le\eps$ (assuming that $\eps_0$ is sufficiently small). We now use the  obtained estimate in \eqref{UB-est1} and \eqref{UB-est2}, which gives
\begin{align*}
\Big | \int_0^a \int_{\omega} f'\, \psi\, w_s \, \dd t\dd s \Big | &\leq \eps \int_0^a |f'|^2\, \dd s + \|w\|_{L^2(\Pi)}^2,\\
\Big |   \int_0^a f \int_{\omega} \psi_s\, w_s\, \dd t\, \dd s      \, \Big |  \ &\leq K\eps \int_0^a f^2\, \dd s + \eps\|w_s\|_{L^2(\Pi)}^2.
\end{align*}
The substitution of these two inequalities into \eqref{crossed} gives
\[
\Big |   \int_0^a \int_{\omega} v_s\, w_s\, \dd t \dd s   \, \Big| \, \leq \,  \eps \int_0^a |f'|^2\, \dd s + K\eps \int_0^a f^2\, \dd s + \|w\|_{L^2(\Pi)}^2  + \eps\|w_s\|_{L^2(\Pi)}^2. 
\]
We now use the last obtained inequality and \eqref{vs-lowerb} in \eqref{us}, which gives
\begin{multline*} 
\int_0^a \int_{\omega} \left( |\partial_s u|^2+\frac{n^2-2n}{4 s^2} \, u^2 \right) \dd t \dd s 
 \geq \,   \int_0^a \left[ \, (1-\eps)\, |f'|^2 + \Big(\frac{n^2-2n}{4s^2} - K\eps \Big) f^2 \right] \dd s \\
 +  \int_0^a \int_{\omega}\left[ \, (1-\eps)\, w_s^2 + \Big(\frac{n^2-2n}{4s^2} - 1 \Big) w^2 \right] \dd t \dd s\, ,
\end{multline*}
for all $\eps\in(0,\eps_0)$ and $u\in D_0(r^-_\eps)$.
Using this lower bound in \eqref{lowerb-b} one arrives at
\begin{align*}
r^-_\eps(u,u)  & \ge   (1-k\eps)\int_0^a \left[ \, (1-\eps)\, |f'|^2 + \left(\frac{n^2-2n}{4s^2} - K\eps \right) f^2 \right] \dd s \\
& \quad +  (1-k\eps)\int_0^a \int_{\omega}\left[ \, (1-\eps) w_s^2 + \left(\frac{n^2-2n}{4s^2} - 1 \right) w^2 \right] \dd t \dd s\\
& \quad + \dfrac{C_0}{2}\int_0^a \int_{\omega}  \frac{w^2}{\eps^2 s^{2}}\, \, \dd t \dd s
-\dfrac{N_\omega}{1-k\eps}\int_0^a \frac{f^2}{\eps s}\dd s\\
&\ge 
(1-k\eps)\int_0^a \left[ \, (1-\eps)\, |f'|^2 + \left(\frac{n^2-2n}{4s^2} - K\eps \right) f^2 \right] \dd s\\
&\quad
-\dfrac{N_\omega}{1-k\eps}\int_0^a \frac{f^2}{\eps s}\dd s -\|w\|^2_{L^2(\Pi)}.
\end{align*}
By taking sufficiently large $b>k$ and $c'>1$ and a smaller value of $\eps_0$ one deduces from
the last inequality the simpler lower bound
\begin{align*}
	r^-_\eps(u,u)&\ge (1-b\eps)\int_0^a \Big[
	|f'|^2+\Big(\frac{n^2-2n}{4s^2}- \dfrac{N_\omega}{(1-b\eps)^2\eps s}\Big) f^2
	\Big]\dd s\\
	&\quad	-c'\|f\|^2_{L^2(0,a)}-c'\|w\|^2_{L^2(\Pi)}.
\end{align*}
Using the norm equality \eqref{eq-normuw} this is equivalent to
\begin{equation}
   \label{eq-reps0}
 \begin{aligned}
r^-_\eps(u,u) +c' \|u\|^2_{L^{2}(\Pi)}&\ge
(1-b\eps)\int_0^a \Big[
|f'|^2+\Big(\frac{n^2-2n}{4s^2}- \dfrac{N_\omega}{(1-b\eps)^2\eps s}\Big) f^2
\Big]\dd s\\
&\equiv (1-b\eps)\langle f, M_{(1-b\eps)^2\eps,a}f\rangle_{L^2(0,a)}.
\end{aligned}
\end{equation}

By the norm equality \eqref{eq-normuw}, the map $u\mapsto(f,w)$ uniquely extends to a unitary map $\Psi:L^2(\Pi)\to L^2(0,a)\oplus \cH$,
where $\cH$ is some closed subspace of $L^2(\Pi)$.
Let $h_\eps$ be the symmetric bilinear form in $L^2(0,a)\oplus \cH$ defined as the closure of
the form
\[
C_c^\infty(0,a)\times \cH\ni (f,w) \mapsto \int_0^a \left[ \, \, |f'|^2 + \left(\frac{n^2-2n}{4s^2} - \frac{N_\omega}{(1-b\eps)^2 \eps s} \right) f^2 \right]\dd s,
\]
then the corresponding self-adjoint operator in $L^2(0,a)\oplus \cH$ is $H_\eps=M_{(1-b\eps)^2 \eps,a}\oplus 0$.
The inequality \eqref{eq-reps0} reads as
$r^-_\eps(u,u)+ c'\|u\|^2_{L^2(\Pi)}\ge (1-b\eps)\,  h_\eps(\Psi u,\Psi u)$ for all $u\in D_0(r^-_\eps)$,
and the lower bound \eqref{low-eq00} for $\Lambda_j(T_\eps)$ implies that for any $j\in\NN$ and any $\eps\in(0,\eps_0)$
there holds
\begin{equation}
\begin{aligned}
	\Lambda_j(T_\eps)+c'&\ge  \inf_{\substack{S\subset D_0(r^-_\eps) \\ \dim S =j} } \sup_{\substack{u\in S\\ u\neq 0}}\,  \frac{r^-_\eps(u,u)+c'\|u\|^2_{L^2(\Pi)}}{\|u\|_{L^2(\Pi)}^2}\\
	&\ge \inf_{\substack{S\subset D_0(r^-_\eps) \\ \dim S =j} } \sup_{\substack{u\in S\\ u\neq 0}}\,  \frac{ (1-b\eps) h_\eps(\Psi u,\Psi u)}{\|\Psi u\|^2_{L^2(0,a) \oplus\cH}}\\
	&\ge (1-b\eps)\inf_{\substack{S\subset D(h_\eps)\\ \dim S=j }} \sup_{\substack{v\in S\\ v\neq 0}}\,  \frac{ h_\eps(v,v)}{\|v\|^2_{L^2(0,a) \oplus\cH}}=(1-b\eps)\Lambda_j(H_\eps).
\end{aligned}
   \label{eqlok10}
\end{equation}
By Lemma \ref{lem7}, for some $K_0>0$ and for all sufficiently small $\eps>0$ we have
\[
E_j(M_{(1-b\eps)^2\eps,a})\le E_j(A_{(1-b\eps)^2\eps})+K_0=-\dfrac{N_\omega^2}{(2j+n-2)^2 (1-b\eps)^4\eps^2} +K_0<0,
\]
hence, $\Lambda_j(H_\eps)=\Lambda_j(M_{(1-b\eps)^2\eps,a}\oplus 0)=E_j(M_{(1-b\eps)^2\eps,a})$, and it follows by \eqref{eqlok10}
that $\Lambda_j(T_\eps)+c'\ge (1-b\eps)E_j(M_{(1-b\eps)^2\eps,a})$.
By \eqref{eq-1dd} we have $E_j(M_{(1-b\eps)^2\eps,a})\ge E_j(A_{(1-b\eps)^2\eps})$, therefore,
\begin{align*}
\Lambda_j(T_\eps)&\ge (1-b\eps) E_j(A_{(1-b\eps)^2\eps})-c'\\
&=-\dfrac{N_\omega^2}{(2j+n-2)^2 (1-b\eps)^3\eps^2}-c'\ge -\dfrac{N_\omega^2}{(2j+n-2)^2 \eps^2}-\dfrac{k'}{\eps}
\end{align*}
for a suitably chosen $k'>0$ and all sufficiently small $\eps>0$.
\end{proof}

By combining Corollary \ref{lem7a} and  Lemma \ref{lem9} we obtain the main result of the section:
\begin{prop}\label{prop8}
	For any $j\in\NN$ there holds
	\[
	\Lambda_j(T_\eps)=-\dfrac{N_\omega^2}{(2j+n-2)^2 \eps^2} +O\Big(\dfrac{1}{\eps}\Big) \text{ as $\eps\to 0^+$.}
	\]
\end{prop}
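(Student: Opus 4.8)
The plan is to deduce Proposition~\ref{prop8} directly by sandwiching $\Lambda_j(T_\eps)$ between the two matching one-sided bounds we have just established. Fix $j\in\NN$. Corollary~\ref{lem7a} supplies a constant $k>0$ and a threshold below which
\[
\Lambda_j(T_\eps)\le -\frac{N_\omega^2}{(2j+n-2)^2\eps^2}+\frac{k}{\eps},
\]
while Lemma~\ref{lem9} supplies a constant $k'>0$ and a (possibly smaller) threshold below which
\[
\Lambda_j(T_\eps)\ge -\frac{N_\omega^2}{(2j+n-2)^2\eps^2}-\frac{k'}{\eps}.
\]
Taking the minimum of the two thresholds as the new $\eps_0$ and $K:=\max\{k,k'\}$, we obtain
\[
\Big|\,\Lambda_j(T_\eps)+\frac{N_\omega^2}{(2j+n-2)^2\eps^2}\,\Big|\le\frac{K}{\eps}\qquad\text{for all }\eps\in(0,\eps_0),
\]
which is exactly the claimed asymptotics with an explicit $O(1/\eps)$ control of the remainder. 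No further argument is needed beyond this elementary bookkeeping of constants.

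Consequently there is no real obstacle left at the level of Proposition~\ref{prop8} itself; the whole difficulty sits in the two inputs, and it is worth recalling where. For the upper bound (Corollary~\ref{lem7a}) the route is: reduce via Proposition~\ref{prop7} and the change of variables to the form $r^+_\eps$, insert product trial functions $f(s)\,\psi_{\eps\rho(s,\eps)}(t)$ built from the ground state of the fibre operator $B_{\eps\rho(s,\eps)}$, use Lemma~\ref{lem-1}(c,e) to control the $E_1(B_{\eps\rho(s,\eps)})$ term and the $\partial_s\psi$ contribution, arriving at a one-dimensional problem governed by $M_{(1+c\eps)^2\eps,a}$, and finally compare the latter with $A_{(1+c\eps)^2\eps}$ through Lemma~\ref{lem7} together with the explicit eigenvalues \eqref{1d EV}. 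For the lower bound (Lemma~\ref{lem9}) the harder step is to decompose an arbitrary admissible $u$ as $u=v+w$, with $v$ in the fibrewise ground state and $w$ fibrewise orthogonal to it, absorb $w$ using the uniform spectral gap $E_2(B_x)-E_1(B_x)\to E_2^N>0$ from Lemma~\ref{lem-1}(d), and---the genuinely delicate point---show that the cross terms $\int_0^a\!\int_\omega v_s\,w_s$ are negligible by exploiting $\|\partial_s\psi_{\eps\rho(s,\eps)}\|_{L^2(\omega)}^2=O(\eps^2)$ from Lemma~\ref{lem-1}(e), again landing on $M_{(1-b\eps)^2\eps,a}\oplus 0$ and then on $A_{(1-b\eps)^2\eps}$.

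Thus the proof of Proposition~\ref{prop8} is the two-line sandwich argument of the first paragraph. Were one forced to start without the two packaged bounds, the hard part would be precisely the uniform-in-$s$ quantification of this asymptotic separation of variables: the fibre operators $B_{\eps\rho(s,\eps)}$ depend on $s$, so the product ansatz is only approximately correct, and one must bound both the $s$-derivative of the fibre ground state and the resulting coupling between the ground-state channel and its orthogonal complement, uniformly as $\eps\to0^+$.
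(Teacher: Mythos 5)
Your proof is correct and is exactly the paper's argument: Proposition~\ref{prop8} follows immediately by combining the upper bound of Corollary~\ref{lem7a} with the lower bound of Lemma~\ref{lem9}, taking the smaller threshold and the larger of the two constants. Your survey of where the real work sits (in the two one-sided bounds) also matches the paper's structure.
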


\section{End of proof of Theorem \ref{thm1}}\label{ssthm1}

Note that the right-hand side of the asymptotics in Proposition~\ref{prop8} corresponds to the sought asymptotics for $E_j(Q_\eps)$ in Theorem \ref{thm1}. In order to conclude the proof of Theorem \ref{thm1} it remains to show that the eigenvalues of $Q_\eps$ and $T_\eps$
with the same numbers are close to each other. This will be done in several steps.

\begin{lemma}\label{lem10}
For any $j\in\NN$ and $\eps>0$ the inequality $\Lambda_j(Q_\eps)\leq \Lambda_{j}(T_\eps)$ holds.
\end{lemma}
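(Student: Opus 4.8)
The plan is to exploit the fact that the test functions defining $\Lambda_j(T_\eps)$ extend naturally to admissible test functions for $\Lambda_j(Q_\eps)$ without changing the relevant quantities. Recall from Subsection~\ref{sec-iso peak} that the variational eigenvalues of $Q_\eps$ are computed via the min-max principle with the core $D:=C^\infty_{(0,\infty)}(\Bar\Omega_\eps)$, while those of $T_\eps$ are given by \eqref{ej-vp0} with $D_0(t_\eps)=C^\infty_{(0,a)}(\Bar\Omega_\eps)$.

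First I would observe that every $u\in C^\infty_{(0,a)}(\Bar\Omega_\eps)$, extended by zero to all of $\Omega_\eps$, belongs to $C^\infty_{(0,\infty)}(\Bar\Omega_\eps)=D$: by \eqref{cinfi} such a $u$ vanishes for $x_1\notin[b,c]$ with $[b,c]\subset(0,a)$, so the extension is smooth on $\Bar\Omega_\eps$ and supported in $[b,c]\subset(0,\infty)$. Thus $D_0(t_\eps)$ is identified with a subspace of $D$.

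Next I would check that the Rayleigh quotients agree on this subspace. For $u\in C^\infty_{(0,a)}(\Bar\Omega_\eps)$ supported in $\{b\le x_1\le c\}$ with $[b,c]\subset(0,a)$ one has $\|u\|^2_{L^2(\Omega_\eps)}=\|u\|^2_{L^2(V_\eps)}$ and $\int_{\Omega_\eps}|\nabla u|^2\dd x=\int_{V_\eps}|\nabla u|^2\dd x$, since $u$ and $\nabla u$ vanish on $\Omega_\eps\setminus V_\eps$. For the boundary terms, note that $\partial\Omega_\eps\cap\{x_1<a\}=\partial_0 V_\eps$, while the remaining part of $\partial\Omega_\eps$ lies in $\{x_1\ge a\}$ where $u=0$; hence $\int_{\partial\Omega_\eps}u^2\dd\sigma=\int_{\partial_0 V_\eps}u^2\dd\sigma$. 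Combining these identities gives $q_\eps(u,u)=t_\eps(u,u)$, so the two Rayleigh quotients coincide for every such $u$.

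Finally, taking the infimum over $j$-dimensional subspaces $S\subset D_0(t_\eps)\subset D$ and using that such $S$ are admissible competitors in the min-max formula for $\Lambda_j(Q_\eps)$ yields $\Lambda_j(Q_\eps)\le\Lambda_j(T_\eps)$. There is no genuine obstacle here; the only point requiring a moment's care is the matching of the boundary integrals, namely that the Dirichlet portion $\{x_1=a\}$ of $\partial V_\eps$ is irrelevant because the test functions vanish there, and that within $\{x_1<a\}$ the boundary $\partial V_\eps$ coincides with $\partial_0 V_\eps$.
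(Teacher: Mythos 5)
Your proof is correct and coincides in substance with the paper's: the paper introduces the extension-by-zero operator $J:L^2(V_\eps)\to L^2(\Omega_\eps)$, notes that $J$ maps $D(t_\eps)$ isometrically into $D(q_\eps)$ with $q_\eps(Ju,Ju)=t_\eps(u,u)$, and invokes the min-max principle; you carry out the same extension-by-zero argument on the cores $C^\infty_{(0,a)}(\Bar\Omega_\eps)\subset C^\infty_{(0,\infty)}(\Bar\Omega_\eps)$ instead of the full form domains, which is equivalent since variational eigenvalues may be computed on any form core. Your additional verification of the boundary-integral matching is a helpful elaboration of a step the paper leaves implicit.
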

\begin{proof}
Let $J:L^2(V_\eps)\to L^2(\Omega_\eps)$	be the operator of extension by zero, then $J$ is a linear isometry
with $JD(t_\eps)\subset D(q_\eps)$ and with $q_\eps(Ju,Ju)=t_\eps(u,u)$ for all $u\in D(t_\eps)$, and the result follows directly by the min-max principle.
\end{proof}

Recall that the subspaces $C^\infty_I(\Bar\Omega_\eps)$ were defined in \eqref{cinfi}.
For $b>0$ denote
\begin{align*}
	\Tilde V_{\eps,b}&:=\Omega_\eps\cap\{x_1>b\}\equiv \big\{(x_1,x')\in (b,\infty)\times\R^n:\, x' \in  \eps x_1\omega \big\}\subset\R^{n+1},\\
\partial_0 \Tilde V_{\eps,b}&:=\partial\Omega_\eps\cap\{x_1>b\}\equiv \big\{(x_1,x')\in (b,\infty)\times\R^n:\,  x'\in \eps x_1 \partial\omega\big\}\subset \partial \Tilde V_{\eps,b},  \\
\Hat H^1_0(\Tilde V_{\eps,b})&=\text{the closure of $C^\infty_{(b,\infty)}(\Bar\Omega_\eps)$ in $H^1(\Tilde V_{\eps,b})$}
\end{align*}
and let $\Tilde T_{\eps,b}$ be the self-adjoint operator in $L^2(\Tilde V_{\eps,b})$ defined by its symmetric bilinear form
\[
	\Tilde t_{\eps,b}(u,u)=\int_{\Tilde V_{\eps,b}} |\nabla u|^2\dd x- \int_{\partial_0 \Tilde V_{\eps,b}} u^2\, \dd\sigma,
	\quad
	D(\Tilde t_{\eps,b})=\Hat H^1_0(\Tilde V_{\eps,b}).
\]

\begin{lemma}\label{lem11}
For any $\eps_0>0$ and $b>0$ there exists $c>0$ such that
\[
\inf\spec \Tilde T_{\eps,b}\ge -\frac{c}{\eps}
\text{ for all $\eps\in(0,\eps_0)$.}
\]
\end{lemma}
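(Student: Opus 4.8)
The plan is to bound the quadratic form $\Tilde t_{\eps,b}$ from below by hand, using the change of variables of Subsection~\ref{sec-subs0}. The reason this is easy here — in contrast with the finite piece $V_\eps$ treated in Section~\ref{sec-teps} — is that after the change of variables the radial coordinate $s$ ranges over $(b,\infty)$ and is in particular bounded \emph{below} by $b$; consequently the effective Robin parameter on the cross‑section $\omega$ is of size $\sim\eps s$ rather than $\sim(\eps s)^{-1}$, and only the term linear in $r$ of $E_1(B_r)$ will matter. Since $C^\infty_{(b,\infty)}(\Bar\Omega_\eps)$ is a core of $\Tilde t_{\eps,b}$ by construction, it suffices to produce $c>0$ depending only on $\omega$, $b$ and $\eps_0$ such that
\[
\Tilde t_{\eps,b}(u,u)\ge -\frac{c}{\eps}\,\|u\|_{L^2(\Tilde V_{\eps,b})}^2 \qquad\text{for all }u\in C^\infty_{(b,\infty)}(\Bar\Omega_\eps),\ \eps\in(0,\eps_0);
\]
taking the infimum of the Rayleigh quotient over this core then yields $\inf\spec\Tilde T_{\eps,b}\ge -c/\eps$.

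Given such a $u$, I would set $\tilde u:=u\circ X$ on $\Tilde\Pi_b:=(b,\infty)\times\omega$, where $X(s,t)=(s,\eps s t)$, so that $\|u\|_{L^2(\Tilde V_{\eps,b})}^2=\eps^n\int_b^\infty\int_\omega s^n|\tilde u|^2\dd t\dd s$. From the block form of the matrix $G$ computed in the proof of Lemma~\ref{lem5} one reads off, for \emph{every} $\eps>0$, the pointwise identity
\[
\langle\nabla\tilde u,G\nabla\tilde u\rangle_{\R^{n+1}}=\Big(\partial_s\tilde u-\frac1s\sum_{k=1}^n t_k\,\partial_{t_k}\tilde u\Big)^2+\frac{1}{\eps^2 s^2}|\nabla_t\tilde u|^2\ \ge\ \frac{1}{\eps^2 s^2}|\nabla_t\tilde u|^2,
\]
hence $\int_{\Tilde V_{\eps,b}}|\nabla u|^2\dd x\ge\eps^{n-2}\int_b^\infty\int_\omega s^{n-2}|\nabla_t\tilde u|^2\dd t\dd s$, with no smallness of $\eps$ needed. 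For the boundary term I would apply Lemma~\ref{lem6a} to $u^2$ (the trace of $u$ on $\partial\Omega_\eps$ is supported in $\partial_0\Tilde V_{\eps,b}$, since $u$ vanishes for $x_1$ outside a compact subinterval of $(b,\infty)$), obtaining $\int_{\partial_0\Tilde V_{\eps,b}}u^2\dd\sigma\le\sqrt{1+R^2\eps^2}\,\eps^{n-1}\int_b^\infty\int_{\partial\omega}s^{n-1}\tilde u^2\dd\tau\dd s$ with $R=\sup_{t\in\omega}|t|$. Subtracting these two bounds and extracting the common factor $\eps^{n-2}s^{n-2}$ gives
\[
\Tilde t_{\eps,b}(u,u)\ge\eps^{n-2}\int_b^\infty s^{n-2}\,b_{r(s)}\big(\tilde u(s,\cdot),\tilde u(s,\cdot)\big)\dd s,\qquad r(s):=\sqrt{1+R^2\eps^2}\;\eps s,
\]
where $b_r$ is the form of the Robin Laplacian $B_r$ on $\omega$ from Subsection~\ref{sec-1d}.

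To conclude, I would use $b_r(g,g)\ge E_1(B_r)\|g\|_{L^2(\omega)}^2$ for $g\in H^1(\omega)$ together with Lemma~\ref{lem-1}(c) in the form $E_1(B_r)\ge -N_\omega r-\|\varphi\|_{L^\infty(0,\infty)}r^2$, valid for all $r>0$. Since $r(s)\le c_3\eps s$ with $c_3:=\sqrt{1+R^2\eps_0^2}$, integrating in $t$ yields
\[
\Tilde t_{\eps,b}(u,u)\ge -N_\omega c_3\,\eps^{n-1}\!\int_b^\infty\! s^{n-1}\|\tilde u(s,\cdot)\|_{L^2(\omega)}^2\dd s-\|\varphi\|_{L^\infty(0,\infty)}c_3^2\,\eps^{n}\!\int_b^\infty\! s^{n}\|\tilde u(s,\cdot)\|_{L^2(\omega)}^2\dd s.
\]
Here $\eps^{n}\int_b^\infty s^{n}\|\tilde u(s,\cdot)\|_{L^2(\omega)}^2\dd s=\|u\|_{L^2(\Tilde V_{\eps,b})}^2$, so using $s^{n-1}\le s^n/b$ (because $s\ge b$) in the first integral and $1\le\eps_0/\eps$ in the second, one arrives at $\Tilde t_{\eps,b}(u,u)\ge -\frac{c}{\eps}\|u\|_{L^2(\Tilde V_{\eps,b})}^2$ with $c:=\frac{N_\omega c_3}{b}+\|\varphi\|_{L^\infty(0,\infty)}c_3^2\eps_0$, which depends only on $\omega$ (through $N_\omega$ and $R$), $b$ and $\eps_0$, as required.

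I do not expect a genuine obstacle here. The points that need a little care are: that $C^\infty_{(b,\infty)}(\Bar\Omega_\eps)$ is indeed a form core and that the boundary integral is well defined — this is part of the general set‑up and is in fact simpler than for $\Omega_\eps$, since removing the vertex leaves $\Tilde V_{\eps,b}$ with Lipschitz boundary; the minor bookkeeping needed to apply Lemma~\ref{lem6a} to $u^2$ rather than to an abstract function on $\partial\Omega_\eps$; and the uniformity of all constants over $\eps\in(0,\eps_0)$, which is precisely what makes the argument work and holds because the identity for $\langle\nabla\tilde u,G\nabla\tilde u\rangle$, Lemma~\ref{lem6a}, and Lemma~\ref{lem-1}(c) are all valid with no restriction on $\eps>0$ or $r>0$.
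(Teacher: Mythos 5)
Your proof is correct and follows essentially the same route as the paper: both discard the $x_1$-derivative contribution (in your coordinates, the perfect square $\bigl(\partial_s\tilde u-\tfrac1s\sum_k t_k\partial_{t_k}\tilde u\bigr)^2$ is precisely $|\partial_{x_1}u|^2$), reduce slice‑by‑slice to the form $b_{r(s)}$ on $\omega$ with $r(s)\sim\eps s$ via Lemma~\ref{lem6a}, and then invoke the lower bound on $E_1(B_r)$ from Lemma~\ref{lem-1}(c), using $s\ge b$ to absorb the linear term into $O(1/\eps)$. The only cosmetic difference is that the paper works in the original $(x_1,x')$ variables and changes coordinates only within each slice, whereas you pass to the $(s,t)$ coordinates globally and read off the estimate from the matrix $G$; the constants and bookkeeping are otherwise identical.
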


\begin{proof}
Let $u\in C^\infty_{(b,\infty)}(\Bar\Omega_\eps)$, then due to Lemma~\ref{lem6a}
one has
\begin{multline}
	\label{ekk}
	\Tilde t_{\eps,b}(u,u)\ge\int_{\Tilde V_{\eps,b}} |\partial_{x_1} u|^2\dd x
	+\int_b^\infty \Big\{
	\int_{\eps x_1\omega} |\nabla_{x'} u (x_1,x')|^2\dd x'\\-(\eps x_1)^{n-1}\sqrt{1+R^2\eps^2}\int_{\partial \omega} u(x_1,\eps x_1 t)^2\dd\tau(t)
	\Big\}\dd x_1.
\end{multline}
We have
\begin{align*}
\int_{\eps x_1\omega} |\nabla_{x'} u (x_1,x')|^2\dd x'&=(\eps x_1)^n \int_{\omega} |(\nabla_{x'} u) (x_1,\eps x_1 t)|^2\dd t\\
&=(\eps x_1)^{n-2} \int_{\omega} |\nabla_{t} u (x_1,\eps x_1 t)|^2\dd t 
\end{align*}
and then 
\begin{align*}
\int_{\eps x_1\omega} &|\nabla_{x'} u (x_1,x')|^2\dd x'-(\eps x_1)^{n-1}\sqrt{1+R^2\eps^2}\int_{\partial \omega} u(x_1,\eps x_1 t)^2\dd\tau(t)\\
&=
(\eps x_1)^{n-2}\bigg[
\int_{\omega} |\nabla_{t} u (x_1,\eps x_1 t)|^2\dd t-\eps x_1\sqrt{1+R^2\eps^2}\int_{\partial \omega} u(x_1,\eps x_1 t)^2\dd\tau(t)
\bigg]\\
&=(\eps x_1)^{n-2}b_{\eps x_1\sqrt{1+R^2\eps^2}}\big(u(x_1,\eps x_1\cdot),u(x_1,\eps x_1\cdot)\big)\\
&\ge (\eps x_1)^{n-2} E_1 (B_{\eps x_1\sqrt{1+R^2\eps^2}}) \int_\omega u(x_1,\eps x_1 t)^2\dd t\\
&=\dfrac{1}{(\eps x_1)^2} E_1 (B_{\eps x_1\sqrt{1+R^2\eps^2}}) \int_{\eps x_1\omega}u(x_1,x')^2\dd x'.
\end{align*}
The substitution into \eqref{ekk} gives
\begin{equation}
\begin{aligned}
\Tilde t_{\eps,b}(u,u)&\ge \int_b^\infty \dfrac{E_1(B_{\eps x_1\sqrt{1+R^2\eps^2}})}{(\eps x_1)^2}\int_{\eps x_1\omega } u^2\dd x'\dd x_1\\
&\ge \inf_{x_1>b} \dfrac{E_1(B_{\eps x_1\sqrt{1+R^2\eps^2}})}{(\eps x_1)^2}
\int_b^\infty \int_{\eps x_1 \omega} u^2\dd x'\dd x_1\\
&\equiv\inf_{x_1>b} \dfrac{E_1(B_{\eps x_1\sqrt{1+R^2\eps^2}})}{(\eps x_1)^2}\|u\|^2_{L^2(\Tilde V_{\eps,b})}.
\end{aligned}
   \label{eqlok5}
\end{equation}
By Lemma \ref{lem-1}(c) there exists $c_0>0$ such that $E_1(B_{r})\ge -N_\omega r-c_0r^2$ for all $r>0$. Hence, for any $x_1>b$ we have
\begin{align*}
\dfrac{E_1(B_{\eps x_1\sqrt{1+R^2\eps^2}})}{(\eps x_1)^2}&\ge \dfrac{-N_\omega\eps x_1\sqrt{1+R^2\eps^2} -c_0 \eps^2 x_1^2(1+R^2\eps^2)}{(\eps x_1)^2}\\
&\equiv-\dfrac{N_\omega \sqrt{1+R^2\eps^2}}{\eps x_1}-c_0(1+R^2\eps^2)\\
&\ge -\dfrac{N_\omega \sqrt{1+R^2\eps_0^2}}{ b\eps}-c_0(1+R^2\eps_0^2)\ge-\dfrac{c}{\eps}
\end{align*}
with $c:=\big(N_\omega\sqrt{1+R^2\eps_0^2}+b\eps_0 c_0(1+R^2\eps_0^2)\big)/b$,
and the substitution into \eqref{eqlok5} gives the result.
\end{proof}

\begin{lemma}\label{lem12}
Let $j\in\NN$. Then there exist $K>0$ and $\eps_{0}>0$ such that for all $\eps\in(0,\eps_{0})$ there holds
$\Lambda_{j}(Q_\eps)\ge \Lambda_j (T_\eps)-K$.
\end{lemma}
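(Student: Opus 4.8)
The plan is to decouple $Q_\eps$ into the near-vertex operator $T_\eps$ and a far operator $\Tilde T_{\eps,b}$ by an IMS-type partition of unity, and then to observe that $\Tilde T_{\eps,b}$ does not affect the lowest $j$ variational eigenvalues because its bottom is only of order $-1/\eps$, whereas $\Lambda_j(T_\eps)$ is of order $-1/\eps^2$ by Corollary~\ref{lem7a}.

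First I would fix numbers $0<b<\beta_1<\beta_2<a$ and pick smooth functions $\chi_1,\chi_2$ of the variable $x_1$ alone, with $0\le\chi_1,\chi_2\le1$, $\chi_1^2+\chi_2^2=1$, $\chi_1\equiv1$ for $x_1\le\beta_1$, $\chi_1\equiv0$ for $x_1\ge\beta_2$. For $u\in C^\infty_{(0,\infty)}(\Bar\Omega_\eps)$ one checks, using that $u$ vanishes for $x_1$ outside some $[\gamma_1,\gamma_2]\subset(0,\infty)$, that $\chi_1u\in C^\infty_{(0,a)}(\Bar\Omega_\eps)\subset\Hat H^1_0(V_\eps)=D(t_\eps)$ and $\chi_2u\in C^\infty_{(b,\infty)}(\Bar\Omega_\eps)\subset\Hat H^1_0(\Tilde V_{\eps,b})=D(\Tilde t_{\eps,b})$; the choice $b<\beta_1$ is needed precisely to keep the support of $\chi_2u$ strictly inside $(b,\infty)$. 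Because $\chi_1u$ and $\chi_2u$ have disjoint supports in $x_1$, one has $q_\eps(\chi_1u,\chi_1u)=t_\eps(\chi_1u,\chi_1u)$ and $q_\eps(\chi_2u,\chi_2u)=\Tilde t_{\eps,b}(\chi_2u,\chi_2u)$. The IMS identity --- the same computation as in the proof of Lemma~\ref{lem7}, based on $\chi_1\nabla\chi_1+\chi_2\nabla\chi_2=0$ --- combined with $\int_{\partial\Omega_\eps}u^2\dd\sigma=\sum_j\int_{\partial\Omega_\eps}(\chi_ju)^2\dd\sigma$ and $\|u\|^2_{L^2(\Omega_\eps)}=\|\chi_1u\|^2_{L^2(V_\eps)}+\|\chi_2u\|^2_{L^2(\Tilde V_{\eps,b})}$ then gives
\[
q_\eps(u,u)+K\|u\|^2_{L^2(\Omega_\eps)}\ \ge\ (t_\eps\oplus\Tilde t_{\eps,b})(\chi_1u,\chi_2u),\qquad K:=\|\chi_1'\|_\infty^2+\|\chi_2'\|_\infty^2,
\]
with $K$ a constant independent of $\eps$.

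The map $u\mapsto(\chi_1u,\chi_2u)$ is then a linear isometry of $L^2(\Omega_\eps)$ into $L^2(V_\eps)\oplus L^2(\Tilde V_{\eps,b})$ carrying $C^\infty_{(0,\infty)}(\Bar\Omega_\eps)$ into $D(t_\eps)\oplus D(\Tilde t_{\eps,b})$, so it sends $j$-dimensional subspaces to $j$-dimensional subspaces; applying the min-max principle to $\Lambda_j(Q_\eps)$ over the core $C^\infty_{(0,\infty)}(\Bar\Omega_\eps)$ (Lemma~\ref{prop4dens}, Corollary~\ref{cor-trace}) I would obtain $\Lambda_j(Q_\eps)+K\ge\Lambda_j(T_\eps\oplus\Tilde T_{\eps,b})$. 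It remains to identify the right-hand side: the variational eigenvalues of a direct sum are the non-decreasing rearrangement of the union of those of the summands, and by Lemma~\ref{lem11} one has $\inf\spec\Tilde T_{\eps,b}\ge-c/\eps$, while by Corollary~\ref{lem7a} one has $\Lambda_i(T_\eps)\le\Lambda_j(T_\eps)\le-N_\omega^2/((2j+n-2)^2\eps^2)+k/\eps<-c/\eps$ for all $i\le j$ once $\eps$ is small enough; hence the $j$ lowest variational eigenvalues of $T_\eps\oplus\Tilde T_{\eps,b}$ all come from $T_\eps$, i.e. $\Lambda_j(T_\eps\oplus\Tilde T_{\eps,b})=\Lambda_j(T_\eps)$. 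Combining the two displayed facts gives $\Lambda_j(Q_\eps)\ge\Lambda_j(T_\eps)-K$ for all sufficiently small $\eps>0$, which is the assertion.

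I expect the only real difficulty to be bookkeeping rather than conceptual: arranging the supports of the cut-offs so that $\chi_1u$ and $\chi_2u$ genuinely belong to the nonstandard form domains $\Hat H^1_0(V_\eps)$ and $\Hat H^1_0(\Tilde V_{\eps,b})$ (and not merely to $H^1$ of the truncated cones), and keeping both $K$ and the comparison $-1/\eps^2\ll-1/\eps$ uniform in $\eps$. All the genuine spectral information is already packaged in Lemma~\ref{lem11} and Corollary~\ref{lem7a}.
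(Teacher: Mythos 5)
Your proof is correct and follows essentially the same IMS-localization strategy as the paper: the same pair of $x_1$-cutoffs, the same inequality $\Lambda_j(Q_\eps)+K\ge\Lambda_j(T_\eps\oplus\Tilde T_{\eps,b})$ via the min-max principle over the core $C^\infty_{(0,\infty)}(\Bar\Omega_\eps)$, and the same scale comparison between $\Lambda_j(T_\eps)\sim-1/\eps^2$ (you use Corollary~\ref{lem7a} where the paper cites Proposition~\ref{prop8}, which is equally adequate) and $\inf\spec\Tilde T_{\eps,b}\ge-c/\eps$ from Lemma~\ref{lem11}. One small slip in the wording: $\chi_1u$ and $\chi_2u$ do \emph{not} have disjoint supports (the constraint $\chi_1^2+\chi_2^2=1$ with smooth $\chi_i$ forces them to overlap); what actually justifies $q_\eps(\chi_1u,\chi_1u)=t_\eps(\chi_1u,\chi_1u)$ is simply that $\supp\chi_1\subset(-\infty,a)$, and likewise $\supp\chi_2\subset(b,\infty)$ for the second identity.
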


\begin{proof} The argument uses the same idea as in Lemma~\ref{lem7}.
Let $\chi_{1},\chi_{2}\in C^{\infty}(0,\infty)$ with $0\le \chi_{1},\chi_{2}\le 1$
and $\chi_{1}^{2}+\chi_{2}^{2}=1$, such that $\chi_{1}(s)=0$ for $s\geq\frac{3a}{4}$ and $\chi_{2}(s)=0$ for $s\leq \frac{a}{2}$.
We set $K:=\lVert\chi_{1}'\rVert_{\infty}^{2}+\lVert\chi_{2}'\rVert_{\infty}^{2}$
and define functions $\rho_j:(x_1,x')\mapsto \chi_j(x_1)$, then $\rho_1^2+\rho_2^2=1$
and $\|\nabla\rho_1\|_\infty^2+\|\nabla\rho_2\|_\infty^2=K$. It is convenient to 
denote $b:=\frac{a}{4}$. For any $u\in C^\infty_{(0,\infty)}(\Bar\Omega_\eps)$ one has
\begin{align*}
\int_{\Omega_\varepsilon}|\nabla u|^{2}\dd x&=\int_{\Omega_{\varepsilon}}|\nabla(\rho_1 u)|^{2}\dd x+\int_{\Omega_{\varepsilon}}|\nabla(\rho_2 u)|^{2}\dd x
-\int_{\Omega_{\varepsilon}}u^{2}(|\nabla \rho_1|^{2}+|\nabla\rho_2|^{2})\dd x\\
&\ge \int_{\Omega_{\varepsilon}}|\nabla(\rho_1 u)|^{2}\dd x+\int_{\Omega_{\varepsilon}}|\nabla(\rho_2 u)|^{2}\dd x
-K \int_{\Omega_{\varepsilon}}u^{2}\dd x.
\end{align*}
As $\rho_1 u$ vanishes for $x_1>\frac{3 a}{4}$ and $\rho_2 u$ vanishes for $x_1<\frac{a}{2}$,
one can rewrite the last inequality as
\[
\int_{\Omega_\varepsilon}|\nabla u|^{2}\dd x+K \int_{\Omega_{\varepsilon}}u^{2}\dd x
\ge \int_{V_\eps}|\nabla(\rho_1 u)|^{2}\dd x+\int_{\Tilde V_{\eps,b}}|\nabla(\rho_2 u)|^{2}\dd x.
\]
Also remark that $\rho_1u\in \Hat H^1_0(V_\eps)$ and $\rho_2u\in \Hat H^1_0(\Tilde V_{\eps,b})$, and 
\begin{align*}
\int_{\partial \Omega_\eps}|u|^2\dd\sigma&=\int_{\partial\Omega_\eps}|\rho_1u|^2\dd\sigma +\int_{\partial\Omega_\eps}|\rho_2 u|^2\dd\sigma=\int_{\partial_0 V_\varepsilon}|\rho_1 u|^{2}\dd\sigma +\int_{\partial_0 \Tilde V_{\varepsilon,b}}|\rho_2 u|^2\dd\sigma,\\
\lVert u\rVert^{2}_{L^2(\Omega_\eps)}&=\int_{\Omega_\eps}|\rho_1 u|^2\dd x+\int_{\Omega_\eps}|\rho_2 u|^{2}\dd x\\
&=\int_{V_{\varepsilon}}|\rho_1 u|^{2}\dd x+\int_{\Tilde V_{\varepsilon,b}}|\rho_2u|^{2}\dd x
=\lVert \rho_{1} u\rVert^{2}_{L^{2}(V_\eps)}+\lVert \rho_{2}u\rVert^{2}_{L^2(\Tilde V_{\varepsilon,b})}.
\end{align*}
Substituting these computations into the expression for $q_\eps(u,u)$ we obtain
\[
q_\eps(u,u)+K \|u\|^2_{L^2(\Omega_\eps)}\ge t_\eps(\rho_1 u,\rho_1 u)+\Tilde t_{\eps,b}(\rho_2 u,\rho_2 u)
\text{ for any $u\in C^\infty_{(0,\infty)}(\Bar\Omega_\eps)$,}
\]
and it follows, using the min-max principle, that for any $j\in\NN$,
\begin{equation}
	\label{eqlok7}
	\begin{aligned}
\Lambda_{j}(Q_{\varepsilon})+K&
=\inf_{\substack{S\subset C^\infty_{(0,\infty)}(\Bar\Omega_\eps)\\ \dim S=j} }\sup_{\substack{u\in S\\ u\neq 0}}\frac{ q_\eps(u,u)+K \|u\|^2_{L^2(\Omega_\eps)} }{\lVert u\rVert^{2}_{L^{2}(\Omega_\eps)}}\\
&\geq\inf_{\substack{S\subset C^\infty_{(0,\infty)}(\Bar\Omega_\eps)\\ \dim S=j} }\sup_{\substack{u\in S\\ u\neq 0}}\frac{t_{\varepsilon}(\rho_1 u,\rho_1 u)+\Tilde t_{\varepsilon,b}(\rho_2 u,\rho_2 u)}{\lVert \rho_{1} u\rVert^{2}_{L^{2}(V_{\varepsilon})}+\lVert \rho_{2}u\rVert^{2}_{L^{2}(\Tilde{V}_{\varepsilon,b})}}
\\
&\geq \inf_{\substack{S\subset D(t_{\varepsilon})\oplus D(\Tilde t_{\varepsilon,b})\\ \dim S=j} }\sup_{\substack{(u_1,u_2)\in S\\ (u_1,u_2)\neq 0}}\frac{t_{\varepsilon}(u_{1},u_{1})+\Tilde t_{\varepsilon,b}(u_{2},u_{2})}{ \lVert u_1\rVert^{2}_{L^{2}(V_{\varepsilon})}+\lVert u_2\rVert^{2}_{L^{2}(\Tilde{V}_{\varepsilon,b})} }=\Lambda_{j}(T_{\varepsilon}\oplus \Tilde T_{\varepsilon,b}).
\end{aligned}
\end{equation}

Now let $j\in\N$ be fixed. As $\eps\to 0^+$, by Proposition \ref{prop8} we have $\Lambda_j(T_\eps)\sim -c\eps^{-2}$ with some $c>0$, and by Lemma~\ref{lem11} we have the bound $\inf\spec \Tilde T_{\eps,b}\ge -\Tilde c\eps^{-1}$ with some $\Tilde c>0$. So for all sufficiently small $\eps>0$ one has $\Lambda_j(T_\eps)<\inf\spec \Tilde T_{\eps,b}$, which implies $\Lambda_{j}(T_{\varepsilon}\oplus \Tilde T_{\varepsilon,b})=\Lambda_j(T_\eps)$. The substitution into \eqref{eqlok7} finishes the proof.
\end{proof}

The following assertion together with the asymptotics of $\Lambda_j(T_\eps)$ from Proposition~\ref{prop8}
completes our proof of Theorem \ref{thm1}:

\begin{prop}\label{prop9}
	Let $j\in\NN$ be fixed, then:
	\begin{itemize}
		\item one can find some $\eps_j>0$ such that $Q_\eps$ has at least $j$ discrete eigenvalues
	below $\inf\spec_\ess Q_\eps$ for all $\eps\in(0,\eps_j)$,
	\item there holds $E_j(Q_\eps)=\Lambda_j(T_\eps)+O(1)$ as $\eps\to 0^+$.
	\end{itemize}
\end{prop}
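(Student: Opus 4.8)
The plan is to deduce Proposition~\ref{prop9} by combining the two-sided comparison of $\Lambda_j(Q_\eps)$ with $\Lambda_j(T_\eps)$ that is already available with the min-max dichotomy of Subsection~\ref{ssmm}. Indeed, Lemma~\ref{lem10} gives $\Lambda_j(Q_\eps)\le\Lambda_j(T_\eps)$ for all $\eps>0$, while Lemma~\ref{lem12} produces $K>0$ and $\eps_0>0$ with $\Lambda_j(Q_\eps)\ge\Lambda_j(T_\eps)-K$ for $\eps\in(0,\eps_0)$; together with Proposition~\ref{prop8} this already yields $\Lambda_j(Q_\eps)=\Lambda_j(T_\eps)+O(1)$, hence $\Lambda_j(Q_\eps)=-\frac{N_\omega^2}{(2j+n-2)^2\eps^2}+O(\eps^{-1})$ as $\eps\to0^+$, which tends to $-\infty$ like $-\mathrm{const}\cdot\eps^{-2}$. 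To turn these variational values into genuine eigenvalues $E_j(Q_\eps)$ lying below $\inf\spec_\ess Q_\eps$ it suffices, by the dichotomy in Subsection~\ref{ssmm}, to know that $\Lambda_j(Q_\eps)<\inf\spec_\ess Q_\eps$ for small $\eps$; so the only real task is a lower bound for $\inf\spec_\ess Q_\eps$ that beats the rate $\eps^{-2}$.

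For that bound I would reuse the IMS decomposition behind Lemma~\ref{lem12}: the chain of inequalities \eqref{eqlok7} was established there for \emph{every} $j\in\NN$, so $\Lambda_j(Q_\eps)+K\ge\Lambda_j\big(T_\eps\oplus\Tilde T_{\eps,b}\big)$ with $b=a/4$. Letting $j\to\infty$ and using that $\lim_j\Lambda_j(\cdot)$ equals $\inf\spec_\ess(\cdot)$ (Subsection~\ref{ssmm}) gives $\inf\spec_\ess Q_\eps+K\ge\inf\spec_\ess\big(T_\eps\oplus\Tilde T_{\eps,b}\big)$. Since $V_\eps$ is bounded and its only boundary singularity is the conical vertex, the embedding $\Hat H^1_0(V_\eps)\hookrightarrow L^2(V_\eps)$ is compact (this can be read off from the Sobolev-space analysis on $\Omega_\eps$ in the appendices, or checked directly), so $T_\eps$ has compact resolvent and $\spec_\ess\big(T_\eps\oplus\Tilde T_{\eps,b}\big)=\spec_\ess\Tilde T_{\eps,b}$. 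Lemma~\ref{lem11} then bounds $\inf\spec\Tilde T_{\eps,b}$ from below by $-c/\eps$, and altogether $\inf\spec_\ess Q_\eps\ge-\frac{c}{\eps}-K$ for all $\eps\in(0,\eps_0)$.

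It remains to assemble the pieces. Since $\Lambda_j(Q_\eps)=-\frac{N_\omega^2}{(2j+n-2)^2\eps^2}+O(\eps^{-1})$ while $\inf\spec_\ess Q_\eps\ge-c\eps^{-1}-K$, there is $\eps_j\in(0,\eps_0]$ such that $\Lambda_i(Q_\eps)\le\Lambda_j(Q_\eps)<\inf\spec_\ess Q_\eps$ for all $i\le j$ and all $\eps\in(0,\eps_j)$, where I used monotonicity of $i\mapsto\Lambda_i(Q_\eps)$. By the dichotomy of Subsection~\ref{ssmm} this forces $Q_\eps$ to have at least $j$ discrete eigenvalues below $\inf\spec_\ess Q_\eps$ with $E_i(Q_\eps)=\Lambda_i(Q_\eps)$ for $i\le j$; in particular $E_j(Q_\eps)=\Lambda_j(Q_\eps)=\Lambda_j(T_\eps)+O(1)$ as $\eps\to0^+$, which is exactly the claim (and, combined with Proposition~\ref{prop8}, it gives the asymptotics of Theorem~\ref{thm1}).

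The main obstacle is the second step, i.e.\ verifying that the essential spectrum of $Q_\eps$ does not descend as fast as the individual eigenvalues. Heuristically this holds because the essential spectrum is generated only by the part of the cone far from the vertex, where the cross-sections are wide and the Robin boundary term acts only on the scale $\eps^{-1}$ -- which is precisely what Lemma~\ref{lem11} quantifies -- whereas the conical vertex contributes a compact-resolvent piece. If one prefers to avoid the compact-resolvent input, the same lower bound on $\inf\spec_\ess Q_\eps$ can be obtained directly from Persson's characterization of the essential spectrum: any function in the form domain of $Q_\eps$ with support bounded away from the origin restricts to an element of $\Hat H^1_0(\Tilde V_{\eps,b'})$ for a suitable $b'>0$, on which $q_\eps$ is controlled from below by $\Tilde t_{\eps,b'}$, and Lemma~\ref{lem11} applies. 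Apart from this point, the argument is pure bookkeeping with the min-max principle.
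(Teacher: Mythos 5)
Your two-sided comparison $\Lambda_j(Q_\eps)=\Lambda_j(T_\eps)+O(1)$ via Lemmas~\ref{lem10} and~\ref{lem12} is exactly the paper's first step. Where you diverge is in how you turn the variational value into a genuine eigenvalue. The paper never estimates $\inf\spec_\ess Q_\eps$: instead, it applies the same $O(1)$ comparison to index $j+1$, invokes Proposition~\ref{prop8} to get $\Lambda_{j+1}(T_\eps)-\Lambda_j(T_\eps)\to+\infty$, and deduces the \emph{strict gap} $\Lambda_j(Q_\eps)<\Lambda_{j+1}(Q_\eps)$ for small $\eps$. By the dichotomy in Subsection~\ref{ssmm}, a strict inequality $\Lambda_j<\Lambda_{j+1}$ already forces $\Lambda_j<\Sigma$ and $E_j=\Lambda_j$, so no bound on the essential spectrum is ever needed. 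This is a cheaper and cleaner route than yours.

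Your alternative --- push $j\to\infty$ in \eqref{eqlok7}, identify $\inf\spec_\ess(T_\eps\oplus\Tilde T_{\eps,b})$ with $\inf\spec_\ess\Tilde T_{\eps,b}$, and apply Lemma~\ref{lem11} --- is precisely what the paper sketches in the Remark immediately \emph{after} Proposition~\ref{prop9}, and it is conceptually sound, but it carries a genuine gap: it relies on the compactness of the resolvent of $T_\eps$ (equivalently, compactness of the embedding $\Hat H^1_0(V_\eps)\hookrightarrow L^2(V_\eps)$), which you assert as if it were routine. It is not routine here: $V_\eps$ is bounded but fails the uniform interior cone condition near the vertex (the inscribed cone shrinks linearly as $x_1\to0$), and the paper never proves this compactness --- the appendices treat density and traces but not Rellich-type embeddings for the conical singularity. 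One can likely argue compactness via a Hardy-type inequality near $x_1=0$ (functions in $\Hat H^1_0(V_\eps)$ are limits of functions vanishing near the vertex, so $\int_{\{x_1<\delta\}}u^2\lesssim\delta^2\int|\nabla u|^2$ and then Rellich on the Lipschitz piece $\{x_1>\delta\}$), but that is an extra argument you owe the reader. Your Persson-theorem fallback has the same flavour: Persson's characterization on an unbounded non-Lipschitz domain with a boundary form term is not an off-the-shelf statement and would need its own justification. The upshot is that your approach buys an explicit lower bound on $\inf\spec_\ess Q_\eps$ (which is useful in its own right, and the paper records it as a remark), but at the cost of a nontrivial compactness input that the paper's gap argument entirely sidesteps.
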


\begin{proof}
Let us fix $j\in \NN$.  By combining the upper bound of Lemma~\ref{lem10} and the lower bound of Lemma~\ref{lem12}
we obtain $\Lambda_j(Q_\eps)=\Lambda_j(T_\eps)+O(1)$. By Proposition~\ref{prop8}
we have $\Lambda_{j+1}(T_\eps)-\Lambda_{j}(T_\eps)\to +\infty$ as $\eps\to 0^+$. It follows that
there exists $\eps_j>0$ such that $\Lambda_j(Q_\eps)<\Lambda_{j+1}(Q_\eps)$ for all $\eps\in(0,\eps_j)$,
and then $E_j(Q_\eps)=\Lambda_j(Q_\eps)$ for the same $\eps$ due to the min-max principle.
\end{proof}

\begin{rem}
By sending $j$ to $\infty$ in \eqref{eqlok7} and using the compactness of the resolvent of $T_\eps$ one also shows that
$\inf\spec\nolimits_\ess Q_\eps\ge \inf\spec_\ess \Tilde T_{\eps,b}$.
Then Lemma \ref{lem11} shows that for small $\eps>0$ one estimates $\inf\spec\nolimits_\ess Q_\eps\ge -c\eps^{-1}$
with a fixed $c>0$. For each $j\in\NN$ the difference between $\inf\spec_\ess Q_\eps$ and $E_j(Q_\eps)$ is of order $\eps^{-2}$, so it is standard to show that the respective eigenfunction $u_{j,\eps}$ satisfies an Agmon-type exponential decay at infinity \cite{agmon}. This explains why the analysis of $Q_\eps$ on the complete infinite cone can be reduced to the analysis of an operator on a finite part of the cone. \end{rem}

\appendix

\section*{Appendix}

\section{Proof of Lemma \ref{prop4dens}}\label{appa}

Let us first recall some basic definitions from the theory of Sobolev spaces, as they will be actually used during the proofs. We mostly follow the convention
from the book~\cite{gris}. For an open set $\Omega\subset\R^m$  and $k\in\NN$ the $k$-th Sobolev space $H^k(\Omega)$ is defined
as
\[
H^k(\Omega):=\big\{u\in L^2(\Omega): \partial^\alpha u\in L^2(\Omega) \text{ for all $|\alpha|\le k$}\}
\]
with all derivatives taken in the sense of distributions, and it is a Hilbert space
with respect to the scalar product
\[
\langle u,v\rangle_{H^k(\Omega)}:=\sum_{|\alpha|\le k} \langle \partial^\alpha u,\partial^\alpha v\rangle_{L^2(\Omega)}.
\]
By $C^\infty(\overline{\Omega})$ one denotes the set of functions defined on $\Omega$ which can be extended
to functions in $C^\infty_c(\R^m)$.
One says that an open set $\Omega\subset\R^m$ has $C^k$ (respectively Lipschitz) boundary, if for any $p\in \partial\Omega$ there exist Cartesian coordinates $(y_1,\dots,y_m)$ centered at $p$, a $C^k$ (respectively Lipschitz) function $h$ of $m-1$ variables,
defined on an open neighborhood of $0$ in $\R^{m-1}$ and with $h(0,\dots,0)=0$, and $\eps>0$ such that
\[
\Omega\cap B_\eps(p)=\big\{y=(y_1,\dots,y_m)\in B_\eps(0):\, y_m<h(y_1,\dots,y_{m-1})\big\}.
\]
Most assertions used in the theory of Sobolev spaces (some density and extension results, trace theorems) are usually formulated
for bounded open sets with Lipschitz boundaries. On the other hand, the cone $\Omega_\eps$ has in general \emph{not} even a $C^0$ boundary:
for example, if $n=2$ and $\omega$ is an annulus, $\omega=\{(x_1,x_2):1<x_1^2+x_2^2<4\}$, then
one easily sees that $\Omega_\eps$ cannot be represented as one of the sides of the graph of a continuous function
near the vertex $0$. Moreover, further common assumptions used in the theory of Sobolev spaces (e.g. the segment condition or the cone condition)
fail as well.

We collect some well known facts about $H^k(\Omega)$ in the following proposition:

\begin{prop}\label{prop-sobolev}
Let $\Omega\subset\R^m$ be an open set.
\begin{itemize}
		\item[(A)] The space
		\[
		H^1_\infty(\Omega):=\big\{ u\in H^1(\Omega): u\in C^\infty(\Omega)\cap L^\infty(\Omega),\, \supp u \text{ is bounded\,}\big\}
		\]
		is dense in $H^1(\Omega)$. (Remark that there are no additional assumptions on $\Omega$.)
		\item[(B)] If $\Omega$ has $C^0$ boundary, then $C^\infty(\Bar \Omega)$ is dense in $H^k(\Omega)$ for any $k\in\NN$. 
		\item[(C)] If $\Omega$ is bounded and has Lipschitz boundary, then:
		\begin{itemize}
			\item[(C.1)] for any $k\in\NN$, any function in $H^k(\Omega)$ can be extended to a function in $H^k(\R^m)$.
		    \item[(C.2)] the linear map $C^\infty(\Bar \Omega)\ni u\mapsto u|_{\partial\Omega}\in L^2(\partial\Omega)$ uniquely extends
		    by continuity to a bounded linear map $\gamma_0:H^1(\Omega)\to L^2(\partial\Omega)$. Moreover, for any $\eps>0$
		    there exists $C_\eps>0$ such that
		    \[
		    \int_{\partial\Omega} (\gamma_0 u)^2 d\sigma_{m-1}\le \eps\int_\Omega |\nabla u|^2\dd x+C_\eps \int_\Omega u^2\dd x
		    \]
		    for all $u\in H^1(\Omega)$, where $\sigma_{m-1}$ is the $(m-1)$-dimensional Hausdorff measure.
		   \end{itemize}
	\end{itemize}
\end{prop}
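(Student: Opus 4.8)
The three assertions are classical, and the proof would mostly consist in recalling the standard constructions; I would invoke the Meyers--Serrin theorem for (A) and the book \cite{gris} for the rest. For (A), the point is that $\Omega$ is completely arbitrary, so one cannot mollify up to $\partial\Omega$; the plan is a three-step reduction of a given $u\in H^1(\Omega)$ to an element of $H^1_\infty(\Omega)$. First, truncate the range: for $T_k(\xi):=\max\{-k,\min\{k,\xi\}\}$ one has $T_k\circ u\in H^1(\Omega)\cap L^\infty(\Omega)$, $\nabla(T_k\circ u)=\mathds{1}_{\{|u|<k\}}\,\nabla u$, and $T_k\circ u\to u$ in $H^1(\Omega)$ by dominated convergence, so one may assume $u\in L^\infty(\Omega)$. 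Second, truncate the support: multiplying by cut-offs $\chi_R$ equal to $1$ for $|x|\le R$, vanishing for $|x|\ge 2R$, with $\|\nabla\chi_R\|_\infty\le 2/R$, gives $\chi_R u\to u$ in $H^1(\Omega)$ with $\chi_R u$ still bounded and of bounded support. Third, invoke the Meyers--Serrin theorem: $C^\infty(\Omega)\cap H^1(\Omega)$ is dense in $H^1(\Omega)$ for \emph{every} open $\Omega$, and when its construction (mollifying the terms of a locally finite partition of unity at scales small enough to remain inside $\Omega$) is applied to a bounded, boundedly supported function, the approximants are still bounded (each mollified term is controlled by the $L^\infty$-norm and the partition has bounded multiplicity) and still of bounded support (only finitely many terms meet a bounded set, and their supports grow by an arbitrarily small amount). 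This settles (A).

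For (B) the plan is the classical density theorem for domains with continuous boundary: a partition of unity subordinate to a finite cover of $\Bar\Omega$ reduces the approximation of $u\in H^k(\Omega)$ to that of pieces supported either away from $\partial\Omega$ (plain mollification) or in a boundary chart in which $\Omega$ agrees with the subgraph $\{y_m<h(y')\}$ of a continuous function $h$; in the latter case one translates the piece downwards, $u_\tau(y):=u(y',y_m-\tau)$ with $\tau>0$, obtaining $H^k$ functions defined on a neighbourhood of that part of $\Bar\Omega$ which converge to the piece in $H^k$ as $\tau\to 0^+$ by continuity of $L^2$-translations, and then mollifies $u_\tau$ at a scale below $\tau$ to land in $C^\infty(\Bar\Omega)$; see \cite{gris}. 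For (C.1) I would simply quote the extension theorem for bounded Lipschitz domains: there is a bounded operator $E_k:H^k(\Omega)\to H^k(\R^m)$ with $(E_k u)|_\Omega=u$ \cite{gris}.

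For (C.2), the boundedness of $\gamma_0:H^1(\Omega)\to L^2(\partial\Omega)$ is classical \cite{gris}, and since both sides of the asserted inequality are continuous on $H^1(\Omega)$, by (B) it suffices to prove it for $u\in C^\infty(\Bar\Omega)$. The plan is to localize by a partition of unity to a single chart in which $\partial\Omega=\{y_m=h(y')\}$ with $h$ Lipschitz and $\Omega$ lying below the graph, and, for $0<t<\delta$, to start from
\[
u\bigl(y',h(y')\bigr)=u\bigl(y',h(y')-t\bigr)+\int_0^t \partial_{y_m}u\bigl(y',h(y')-s\bigr)\,\dd s.
\]
Applying the Cauchy--Schwarz inequality, integrating in $y'$ over the chart and averaging over $t\in(0,\delta)$ bounds the boundary integral over the chart by $C\delta^{-1}\int_{S_\delta}u^2+C\delta\int_{S_\delta}|\nabla u|^2$, where $S_\delta$ denotes the boundary strip of width $\delta$. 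Summing over the finitely many charts ($\dd\sigma_{m-1}$ being comparable to $\dd y'$ with constants depending only on the Lipschitz norms) gives $\int_{\partial\Omega}(\gamma_0 u)^2\,\dd\sigma_{m-1}\le C\delta\int_\Omega|\nabla u|^2\,\dd x+C\delta^{-1}\int_\Omega u^2\,\dd x$, and taking $\delta$ with $C\delta=\eps$ yields the claim with $C_\eps:=C^2/\eps$.

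The hard part is (A): with no regularity hypothesis on $\Omega$ the usual boundary mollification is unavailable, so everything must be routed through Meyers--Serrin, and the genuine content is to check that the preliminary truncations in range and support make that construction deliver smoothness, boundedness and bounded support simultaneously. The remaining steps are either direct citations or, as in the chart estimate of (C.2), elementary but somewhat tedious to spell out in full.
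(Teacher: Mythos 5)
The paper does not prove this proposition: these are standard Sobolev-space facts, and the paper simply refers to \cite{mp} for (A) and (B), to \cite{adams} for (C.1), and to \cite{gris} for (C.2), without reproducing arguments. Your sketches supply self-contained proofs of the same statements and are essentially correct, being in substance what those references do. For (A), the three-step reduction (range truncation by $T_k$, support truncation by $\chi_R$, then re-running the Meyers--Serrin partition-of-unity construction on a bounded, boundedly supported function) is the right idea, and the point that genuinely needs care --- that the mollification scales be small enough that the enlarged supports still have bounded multiplicity, so that the sum of the mollified pieces stays in $L^\infty$ and keeps bounded support --- is the one you correctly isolate. One small thing to tidy in (B): the proposition does not assume $\Omega$ bounded, so a ``finite cover of $\Bar\Omega$'' is not available in general; you need a locally finite cover of $\partial\Omega$ and the usual bookkeeping to control the total approximation error. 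This is routine, and in any case (B) is stated for completeness but not invoked elsewhere in the paper. Your (C.2) argument, averaging the fundamental-theorem-of-calculus identity over the translation parameter $t\in(0,\delta)$, is the standard way to obtain the interpolation inequality with an arbitrarily small gradient constant. Overall the difference between your route and the paper's is purely one of exposition: you make the appendix self-contained, while the paper delegates textbook material to the literature; for a research paper the latter is the customary economy, but your argument is sound.
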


We refer to \cite[Theorem in Sec.~1.4.3]{mp} for (A), to \cite[Theorem 1 in Sec.~1.4.2]{mp} for (B),
to \cite[Thm.~5.2.4]{adams} for (C.1) and to \cite[Theorem 1.5.1.10]{gris} for (C.2). Note that
one usually writes simply $u$ instead of $\gamma_0u$ in the integrals over the boundary.

Now we pass to the discussion of Sobolev spaces on the infinite cones $\Omega_\eps$. We start with several preparation steps.

\begin{lemma}\label{lem-cyl}
	Let $-\infty<a<b<\infty$, then the cylinder $\Omega:=(a,b)\times \omega\subset\R^{n+1}$ has Lipschitz boundary.
\end{lemma}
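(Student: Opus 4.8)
The plan is to verify the defining local property of a Lipschitz boundary directly at every point of $\partial\Omega$, where $\Omega=(a,b)\times\omega$. First I would record that
\[
\partial\Omega=\bigl(\{a,b\}\times\overline\omega\bigr)\cup\bigl([a,b]\times\partial\omega\bigr),
\]
so every $p=(x_1^0,y_0)\in\partial\Omega$ falls into exactly one of three cases: (i) $x_1^0\in\{a,b\}$ and $y_0\in\omega$; (ii) $x_1^0\in(a,b)$ and $y_0\in\partial\omega$; (iii) $x_1^0\in\{a,b\}$ and $y_0\in\partial\omega$. The point in each case is that, for $\eps$ small enough, inside $B_\eps(p)$ the conditions defining $\Omega$ simplify: the constraints $x_1>a$ and $x_1<b$ are each either inactive or reduce to a single half-space, while the membership $x'\in\omega$ near $y_0$ is, when $y_0\in\partial\omega$, equivalent to the single Lipschitz-graph condition furnished by the chart of $\partial\omega$ at $y_0$ (other parts of $\partial\omega$ being at positive distance), and is empty when $y_0\in\omega$.

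For case (i) the only active constraint near $p$ is a half-space $\{x_1>a\}$ (or $\{x_1<b\}$); translating $p$ to the origin and taking $x_1$ (suitably signed) as the last Cartesian coordinate exhibits $\Omega$ locally as the subgraph of the zero function, so $h\equiv0$ works. For case (ii) the only active constraint is $x'\in\omega$ near $y_0$, which by hypothesis reads $z_n<h(z')$ in suitable Cartesian coordinates $z$ on $\R^n$ with $h$ Lipschitz and $h(0)=0$; adjoining $x_1-x_1^0$ as a free extra variable and keeping $z_n$ as the last coordinate exhibits $\Omega$ locally as the subgraph of the ($x_1$-independent, hence Lipschitz) function $h$. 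Both cases are pure bookkeeping.

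The main obstacle is case (iii), the ``edge'' $\{a\}\times\partial\omega$ (the case $\{b\}\times\partial\omega$ being symmetric), where two boundary pieces meet. Here I would use the Lipschitz chart of $\partial\omega$ at $y_0$ to obtain Cartesian coordinates $w=(w_1,\dots,w_{n+1})$ centered at $p$, with $w_1=x_1-a$, such that for small $\eps$
\[
\Omega\cap B_\eps(p)=\bigl\{\,|w|<\eps:\ w_1>0,\ w_{n+1}<h(w_2,\dots,w_n)\,\bigr\},
\]
with $h$ Lipschitz and $h(0)=0$. This is a ``wedge'': the intersection of the half-space $\{w_1>0\}$ with the subgraph $\{w_{n+1}<h\}$, and the crucial structural fact is that the two graphing directions $w_1$ and $w_{n+1}$ are orthogonal. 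I would then apply the $45^\circ$ rotation acting on the $(w_1,w_{n+1})$-plane and fixing the coordinates $w_2,\dots,w_n$, i.e. pass to the Cartesian coordinates $y$ with $y_j=w_j$ for $2\le j\le n$ and
\[
w_1=\frac{y_1-y_{n+1}}{\sqrt2},\qquad w_{n+1}=\frac{y_1+y_{n+1}}{\sqrt2}.
\]
Then $w_1>0\iff y_{n+1}<y_1$ and $w_{n+1}<h\iff y_{n+1}<\sqrt2\,h(y_2,\dots,y_n)-y_1$, hence
\[
\Omega\cap B_\eps(p)=\bigl\{\,|y|<\eps:\ y_{n+1}<G(y_1,\dots,y_n)\,\bigr\},\qquad
G:=\min\bigl\{\,y_1,\ \sqrt2\,h(y_2,\dots,y_n)-y_1\,\bigr\}.
\]
Since a minimum of two Lipschitz functions is Lipschitz, $G$ is Lipschitz, and $G(0)=\min\{0,0\}=0$; this is precisely the representation required by the definition. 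Collecting the three cases then proves that $\partial\Omega$ is Lipschitz. I expect only case (iii) to need genuine thought, and within it the key idea is the $45^\circ$ rotation that converts the transversal wedge into the subgraph of a $\min$ of two Lipschitz functions.
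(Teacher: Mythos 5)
Your proof is correct and follows essentially the same approach as the paper's: the same three-way case split, the same use of a free variable in the interior-lateral case and the zero function on the caps, and the same key idea for the edge case of rotating in the $(x_1,\text{normal})$-plane to represent the wedge as the subgraph of a minimum of two Lipschitz functions. The only cosmetic difference is that the paper rotates by an arbitrary angle $\theta\in(0,\tfrac{\pi}{2})$ where you fix $\theta=\tfrac{\pi}{4}$; both yield the same conclusion.
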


\begin{proof}
	Let $p\in\partial\Omega$, then the following cases are possible.

	Case 1: $p=(a',p')$ with $a'\in(a,b)$ and $p'\in \partial\omega$. Since $\omega$ has Lipschitz boundary, there exists
	Cartesian coordinates $(y_1,\dots,y_n)$ in $\R^n$ centered at $p'$ and a Lipschitz function $h$ with $h(0)=0$
	such that $\omega$ coincides with $\{y: y_n<h(y_1,\dots,y_{n-1})\}$ near $p'$. Denote $z:=x_1-a'$, then
	$(z,y_1,\dots,y_n)$ are Cartesian coordinates in $\R^{n+1}$ centered at $p$, and $\Omega$ near $p$
	coincides with $\{(z,y): y_n<H(z,y_1,\dots,y_{n-1})\}$ for the function $H(z,y_1,\dots,y_{n-1}):=h(y_1,\dots,y_{n-1})$,
	which is obviously Lipschitz.
	
	Case 2a: $p=(a,x')$ with $x'\in\omega$, then $\Omega$ near $p$ coincides with $\{(z,y): z<0\}$, where $y=(y_1,\dots,y_n)$ are arbitrary Cartesian coordinates in $\R^n$ centered at $x'$ and $z:=a-x_1$: remark that $(z,y_1,\dots,y_n)$ are Cartesian coordinates in $\R^{n+1}$ centered at $p$, and the zero function is obviously Lipschitz. Case 2b: $p=(b,x')$ with $x'\in\omega$ is treated analogously.
	
	Case 3a: $p=(a,p')$ with $p'\in \partial\omega$ (the most difficult one). Since $\omega$ has Lipschitz boundary, there exist Cartesian coordinates $(y_1,\dots,y_n)$ in $\R^n$ centered at $p'$ and a Lipschitz function $h$ with $h(0)=0$
	such that $\omega$ coincides with $\{y: y_n<h(y_1,\dots,y_{n-1})\}$ near $p'$. Remark that $\Omega$ near $p$ is then determined by the two inequalities
	\begin{equation}
		\label{x1y}
		x_1>a, \quad y_n< h(y_1,\dots,y_{n-1}).
	\end{equation}
	In order to bring these conditions
	into the required form we pick $\theta\in(0,\frac{\pi}{2})$ and apply a rotation by the angle $\theta$ around $p$ in the $(x_1,y_n)$-plane. Namely, consider the Cartesian coordinates $(z,y_1,\dots,y_{n-1},w)$
	with the previous $y_1,\dots,y_{n-1}$ and
	\[
	\begin{pmatrix} x_1-a\\ y_n\end{pmatrix}= z \begin{pmatrix} \cos \theta \\ \sin\theta \end{pmatrix} + w\begin{pmatrix} -\sin\theta \\ \cos\theta \end{pmatrix}.
	\]
	Clearly, the new coordinates are centered at $p$, and the above inequalities \eqref{x1y} determining $\Omega$ near $p$ take the form
	\begin{equation}
		w<\frac{\cos\theta}{\sin\theta}\,, \quad w<-\frac{\sin\theta}{\cos\theta}\, z+\frac{1}{\cos\theta}\, h(y_1,\dots,y_{n-1}),
	\end{equation}
	which can be rewritten as
	\[
	w<H(z,y_1,\dots,y_{n-1}):=\min\Big\{\frac{\cos\theta}{\sin\theta}\,z,-\frac{\sin\theta}{\cos\theta}\, z+\frac{1}{\cos\theta}\, h(y_1,\dots,y_{n-1})\Big\}.
	\]
	The function $H$ is Lipschitz (since it is the minimum of two Lipschitz functions), hence, one has a required representation of $\Omega$ near $p$. The case 3b:  $p=(b,p')$ with $p'\in \partial\omega$ is considered analogously.
\end{proof}
The next step is somewhat technical; it shows that each $H^1$-function on a cylinder vanishing identically near the bases
can be suitably approximated by test functions vanishing near the bases.
\begin{lemma}\label{lem-cyl2}
Let $-\infty<a<b<\infty$ and $\Omega:=(a,b)\times \omega\subset\R^{n+1}$. Let
$[c,c']\subset (a,b)$ and $u\in H^1(\Omega)$ be such that $u(x_1,x')=0$ for $x_1\notin [c,c']$.
Let $0<\delta<\min\{c-a,b-c'\}$, then for any $\eps>0$ there exists $\varphi\in C^\infty_c(\R^{n+1})$ such that
 $\|u-\varphi\|_{H^1(\Omega)}<\eps$ and $\varphi(x_1,x')=0$ for all $x_1\notin[c-\delta,c'+\delta]$.
\end{lemma}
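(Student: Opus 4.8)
The plan is to extend $u$ to all of $\R^{n+1}$, cut the extension off to a compactly supported function that still equals $u$ on $\Omega$ but vanishes outside the $x_1$-slab $[c-\delta,c'+\delta]$, and then mollify. The key structural input is that the cylinder $\Omega=(a,b)\times\omega$ is bounded and, by Lemma~\ref{lem-cyl}, has Lipschitz boundary, so Proposition~\ref{prop-sobolev}(C.1) yields an extension $U\in H^1(\R^{n+1})$ with $U|_\Omega=u$.

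First I would fix $\delta'\in(0,\delta)$ and pick $\eta\in C^\infty_c(\R)$ with $0\le\eta\le1$, $\eta\equiv1$ on $[c,c']$ and $\supp\eta\subset[c-\delta',c'+\delta']$, together with $\zeta\in C^\infty_c(\R^n)$ equal to $1$ on a neighbourhood of $\Bar\omega$. Setting $W(x_1,x'):=\eta(x_1)\zeta(x')U(x_1,x')$ produces a function in $H^1(\R^{n+1})$ with compact support that vanishes for $x_1\notin[c-\delta',c'+\delta']$. On $\Omega$ one has $W=\eta(x_1)u$, and since $\eta\equiv1$ on $[c,c']$ while $u\equiv0$ for $x_1\notin[c,c']$, in fact $\eta u=u$ on $\Omega$, so $W|_\Omega=u$.

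Then I would mollify: with a standard mollifier $(j_\rho)_{\rho>0}$ on $\R^{n+1}$ set $\varphi:=W*j_\rho\in C^\infty_c(\R^{n+1})$. One has $\varphi\to W$ in $H^1(\R^{n+1})$ as $\rho\to0^+$, hence $\varphi|_\Omega\to W|_\Omega=u$ in $H^1(\Omega)$, while $\supp\varphi$ lies in the $\rho$-neighbourhood of $\supp W$, so $\varphi$ vanishes for $x_1\notin[c-\delta'-\rho,c'+\delta'+\rho]$. Taking $\rho$ small enough that $\|\varphi|_\Omega-u\|_{H^1(\Omega)}<\eps$ and $\delta'+\rho\le\delta$ finishes the argument.

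I do not expect any serious obstacle here: the only things to verify are the routine facts that multiplying an $H^1$ function by a $C^\infty_c$ cutoff keeps it in $H^1$ with support inside that of the cutoff, and that mollification converges in $H^1(\R^{n+1})$ and enlarges the support by at most $\rho$. The real content of the lemma is simply that the cylinder is regular enough (Lemma~\ref{lem-cyl}) to admit the global extension of Proposition~\ref{prop-sobolev}(C.1); everything else is bookkeeping with cutoffs and mollifiers.
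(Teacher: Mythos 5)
Your proof is correct and follows essentially the same route as the paper: extend $u$ via Lemma~\ref{lem-cyl} and Proposition~\ref{prop-sobolev}(C.1), multiply by a product cutoff $\eta(x_1)\zeta(x')$ to localize the $x_1$-support inside a slightly smaller slab, then mollify and use that convolution enlarges the support by at most the mollification radius. The only cosmetic difference is that the paper fixes the intermediate margin at $\delta/2$ rather than an arbitrary $\delta'<\delta$.
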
	

\begin{proof}
By Lemma \ref{lem-cyl} and Proposition~\ref{prop-sobolev} (part C.1) the function $u$ can be extended to a function $v'\in H^1(\R^{n+1})$.
Choose $\chi\in C^\infty_c(\R)$ such that $\chi(s)=1$ for $s\in[c,c']$ and $\supp\chi\in \big[c-\frac{\delta}{2},c'+\frac{\delta}{2}\big]$,
and, in addition, choose $\chi_0\in C^\infty_c(\R^n)$ with $\chi_0=1$ on $\omega$. Then
the function $v:(x_1,x')\mapsto \chi(x_1)\chi_0(x')v'(x_1,x')$ belongs to $H^1(\R^{n+1})$, is an extension of $u$, has compact support, and $v(x_1,x')=0$ for all $x_1\notin \big[c-\frac{\delta}{2},c'+\frac{\delta}{2}\big]$.

Let $\rho\in C^\infty_c(\R^{n+1})$ with
\[
\rho(y)=0 \text{ for } |y|\ge 1, \quad
\int_{\R^{n+1}}\rho(y)\dd y=1,
\]
and for $t>0$ consider the functions $\rho_t:x\mapsto t^{-(n+1)}\rho(t^{-1}x)$.
Then $v_t:=v\ast\rho_t\in C^\infty_c(\R^{n+1})$, where $\ast$ denotes the convolution product,
and  $\|v_t-v\|_{H^1(\R^{n+1})}\to 0$ for $t\to 0^+$. Hence, there exists some $t_0>0$ such that
$\|v_t-v\|_{H^1(\R^{n+1})}<\eps$ for all $t\in(0,t_0)$.

Furthermore, the definition of the convolution product implies the inclusion $\supp v_t\subset \supp v+\Bar B_t(0)$.
In particular, if $t<\frac{\delta}{2}$,
then one has $v_t(x_1,x')=0$ for all $x_1\notin[c-\delta,c'+\delta]$. Now pick any $0<t<\min\big\{t_0,\frac{\delta}{2}\big\}$
and denote $\varphi:=v_t$, then
\[
\|\varphi-u\|_{H^1(\Omega)}=\|v_t-v\|_{H^1(\Omega)}\le \|v_t-v\|_{H^1(\R^{n+1})}<\eps,
\]
so $\varphi$ has all the required properties.	
\end{proof}

Let us finish the proof of Lemma \ref{prop4dens}.

	First remark that $H^1_\infty(\Omega_\eps)$ is dense in $H^1(\Omega_\eps)$ by Proposition \ref{prop-sobolev}(A).
	So we need to show that any function in $H^1_\infty(\Omega_\eps)$ can be approximated by functions from $C^\infty_{(0,\infty)}(\Bar \Omega_\eps)$
	in the $H^1$-norm.
	
	Let $v\in H^1_\infty(\Omega_\eps)$, then there exists some $c\in(0,\infty)$ such that $v(x_1,x')=0$ for $x_1>c$.
	Let $\chi:\R\to \R$ be a $C^\infty$-function with $0\le\chi\le 1$, $\chi(s)=0$ for $s<\frac{1}{2}$, and $\chi(s)=1$ for $s>1$. For $\delta>0$ consider the functions
	\[
	v_\delta:(x_1,x')\mapsto \chi\big(\frac{x_1}{\delta}\big)v(x_1,x').
	\]
	We have
	\[
	\|v_\delta -v\|^2_{L^2(\Omega_\eps)}=\int_{\Omega_\eps} \big| 1 -\chi\big(\frac{x_1}{\delta}\big)^2\big| \,v(x_1,x')^2\dd x
	\le \int_{\Omega_\eps\cap\{x_1<\delta\}} \,v(x_1,x')^2\dd x\stackrel{\delta\to 0^+}{\longrightarrow}0.
	\]
	Furthermore,
	\begin{align*}
		\partial_1 v_\delta(x_1,x')&=\frac{1}{\delta} \chi'\big(\frac{x_1}{\delta}\big) v(x_1,x')+\chi\big(\frac{x_1}{\delta}\big)\partial_1 v(x_1,x'),\\
		\partial_j v_\delta(x_1,x')&=\chi\big(\frac{x_1}{\delta}\big)\partial_j v(x_1,x') \text{ for }j\ge 2.
	\end{align*}
For every $j\ge 2$ one obtains
	\begin{align*}
		\|\partial_j v_\delta -\partial_j v\|^2_{L^2(\Omega_\eps)}&=\int_{\Omega} \big| 1 -\chi\big(\frac{x_1}{\delta}\big)^2\big| \,\big|\partial_j v(x_1,x')\big|^2\dd x\\
		&\le \int_{\Omega_\eps\cap\{x_1<\delta\}} \big|\partial_j v(x_1,x')\big|^2\dd x\stackrel{\delta\to 0^+}{\longrightarrow}0.
	\end{align*}
	In addition, using $(x+y)^2\le 2(x^2+y^2)$ we estimate
	\begin{equation}
		\begin{aligned}
			\|\partial_1 v_\delta -\partial_1 v\|^2_{L^2(\Omega_\eps)}&\le 2\int_{\Omega_\eps} \big| 1 -\chi\big(\frac{x_1}{\delta}\big)^2\big| \,\big|\partial_1 v(x_1,x')\big|^2\dd x\\
			&\quad 	+\frac{2}{\delta^2}\int_{\Omega_\eps} \chi'\big(\frac{x_1}{\delta}\big)^2 v(x_1,x')^2\dd x	\\
			&\le \int_{\Omega_\eps\cap\{x_1<\delta\}} \big|\partial_1 v(x_1,x')\big|^2\dd x
			+\dfrac{2}{\delta^2}\, \|\chi'\|_\infty^2 \|v\|^2_\infty \int_{\Omega_\eps\cap\{x_1<\delta\}}\dd x.
		\end{aligned}
		\label{temp6}
	\end{equation}
	The first summand on the right-hand converges to $0$ as $\delta\to 0^+$ since $\partial_1 v\in L^2(\Omega_\eps)$. We further note that
	\[
	\int_{\Omega_\eps\cap\{x_1<\delta\}}\dd x=\int_0^\delta \int_{\eps x_1 \omega} \dd x'\dd x_1=\eps^n \vol_n \omega \int_0^\delta x_1^n\dd x_1 =\eps^n \vol_n \omega \,\frac{\delta^{n+1}}{n+1},
	\]
	and the second summand on the right-hand side of \eqref{temp6} is estimated from above by $\frac{2}{n+1}\,\eps^n \|\chi'\|_\infty^2 \|v\|^2_\infty \vol_n\omega \delta^{n-1}$, which converges to $0$
	as $\delta\to 0^+$ due to $n\ge 2$. We have proved that $v_\delta$ converges to $v$ in $H^1(\Omega_\eps)$ as $\delta\to 0^+$. Remark that $v_\delta(x_1,x')=0$ for $x_1\notin[\delta/2,c]$, therefore, the above constructions show that
	the subspace
	\begin{multline*}
		\cD:=\big\{ u\in H^1(\Omega_\eps)\cap C^\infty(\Omega_\eps)\cap L^\infty(\Omega_\eps): \\
		\text{ $\exists\, [b,c]\subset (0,\infty)$ such that }
		\text{$u(x_1,x')=0$ for $x_1\notin [b,c]$} \big\},
	\end{multline*}
    is dense in $H^1(\Omega_\eps)$. Now it remains to check that each function in $\cD$ can be approximated by functions from $ C^\infty_{(0,\infty)}(\Bar \Omega_\eps)$ in $H^1(\Omega_\eps)$.

	Let $u\in\cD$ and $[b,c]\subset(0,\infty)$ such that $u(x_1,x')=0$ for $x_1\notin[b,c]$. The map
	$X:(0,\infty)\times \R^n\ni (s,t)\mapsto (s,\eps st)\in (0,\infty)\times\R^n$
	is a diffeomorphism with $X\big((0,\infty)\times \omega\big)=\Omega_\eps$.
	Pick an arbitrary $\delta\in(0,\frac{b}{2})$ and denote $\Omega':=(b-2\delta,c+2\delta)\times \omega$,
	then the function $u_X:=u\circ X$ belongs to $H^1(\Omega')$.
	
	Let $\mu>0$, then by Lemma~\ref{lem-cyl2} one can find $\varphi^\mu_X\in C^\infty_c(\R^{n+1})$ with
	\[
	\|u_X-\varphi^\mu_X\|_{H^1(\Omega')}<\mu, \quad \varphi^\mu_X(x_1,x')=0 \text{ for all } x_1\notin[b-\delta,c+\delta].
	\]
	Then the functions
	\[
	\varphi^\mu:\R^{n+1}\ni x\mapsto \begin{cases}
		\varphi_X^\mu \big(X^{-1}(x)\big), & x_1>0,\\
		0, & \text{otherwise}
	\end{cases}
	\]
	belong to $C^\infty_c(\R^{n+1})$ and $\varphi^\mu(x_1,x')=0$ for all $x_1\notin[b-\delta,c+\delta]$, i.e. the restriction of $\varphi^\mu$
	to $\Omega_\eps$ belongs to $ C^\infty_{(0,\infty)}(\Bar \Omega_\eps)$.

	The supports of $u$ and $\varphi^\mu$ are contained in $[b-\delta,c+\delta]\times\R^n$
	and all derivatives of $X$ and $X^{-1}$ are uniformly bounded on the compact sets
	$\overline{\Omega'}$ and $\Bar\Omega_\eps\cap\{x_1\in [b-2\delta,c+2\delta]\}$ respectively.
	Therefore, one can find some $C>0$ such that
	\[
	\|u-\varphi^\mu\|_{H^1(\Omega_\eps)}\equiv
	\|u-\varphi^\mu\|_{H^1(\Omega_\eps \cap \{x_1\in (b-2\delta,c+2\delta)\})}
	\le C\|u_X-\varphi^\mu_X\|_{H^1(\Omega')}
	\] for all $\mu>0$.
	Since $\mu$ can be taken arbitrarily small, this concludes the proof.

\section{Traces and semiboundedness}\label{appb}
Firstly we prove Lemma \ref{lem6a} and then move on to show the wellposedness of our spectral problem, i.e. the lower semiboundedness and closedness of $q_\eps$.

\begin{proof}[Proof of Lemma \ref{lem6a}]
	As usual for the integration over hypersurfaces, it is sufficient to prove the statement for functions supported in images
	of local charts, then this statement is extended to general functions using a partition of unity.	
	
	Let $U\ni z=(z_1,\dots,z_{n})\mapsto \varphi(z)$ be a local chart on $\partial\omega$, then
	\[
	\Phi:(0,\infty)\times U\ni (s,z)\mapsto \big(s,\eps s\varphi(z)\big)\equiv X\big(s,\varphi(z)\big)\in \partial \Omega_\eps
	\]
	is a local chart on $\partial \Omega_\eps$. 	If $v|_{\partial \Omega_\eps}$ is supported in the image
	of $\Phi$, then
	\begin{equation}
		\label{eq-veps}
		\int_{\partial \Omega_\eps} |v|\dd\sigma = \int_0^\infty \int_{U} \left|v\big( \Phi(s,z)\big)\right|\, g_\Phi(s,z)\dd z \dd s,\quad
		g_\Phi:=\sqrt{\det (D\Phi^T D\Phi)}.
	\end{equation}
	We compute
	\begin{align*}
		(D\Phi^T D\Phi)(s,z)&=\begin{pmatrix}
			1+\eps^2 |\varphi(z)|^2 & \eps^2 s F(z)^T\\[\smallskipamount]
			\eps^2 s F(z) & \eps^2 s^2 G_\varphi(z)
		\end{pmatrix}, \quad G_\varphi:=D\varphi^T D\varphi,\\
		F(z)&:=\begin{pmatrix}
			\big\langle \varphi(z),\partial_1 \varphi(z)\big\rangle_{\R^n}\\[\smallskipamount]
			\vdots\\
			\big\langle \varphi(z),\partial_{n-1} \varphi(z)\big\rangle_{\R^n}
		\end{pmatrix}\equiv \dfrac{1}{2}\nabla_z \big|\varphi(z)\big|^2\equiv \big|\varphi(z)\big| \nabla_z \big|\varphi(z)\big|.
	\end{align*}
	The matrix $G_\varphi$ is invertible a.e. (as $\varphi$ is a local chart), therefore, using
	well-known formulas for the determinants of block matrices (see e.g. \cite{sylv}) we obtain
	\begin{align*}
		g_\Phi&(s,z)^2\equiv \det (D\Phi^T D\Phi)\\
		&=\left(1+ \eps^2 |\varphi(z)|^2 - \Big\langle \eps^2 s F(z),  \big( \eps^2 s^2 G_\varphi(z)\big)^{-1}
		\eps^2 s F(z)\Big\rangle\right) \det  \big( \eps^2 s^2 G_\varphi(z)\big)\\
		&=\eps^{2(n-1)}s^{2(n-1)}\Big[1+\eps^2 |\varphi(z)|^2 \Big( 1-\Big\langle \nabla_z |\varphi(z)|, G_\varphi(z)^{-1} \nabla_z |\varphi(z)| \Big\rangle\Big)\Big] \det G_\varphi(z).
	\end{align*}
	Consider the function $r: \partial\omega\ni t\mapsto |t|\in \R$, then
	\[
	\big\langle \nabla_z |\varphi(z)|, G_\varphi(z)^{-1} \nabla_z |\varphi(z)| \big\rangle
	= |\nabla^{\partial\omega} r|^2 \big(\varphi(z)\big),
	\quad
	|\varphi(z)|^2=r^2\big(\varphi(z)\big),
	\]
	with $\nabla^{\partial\omega} r$ being the tangential gradient of $r$ along $\partial\omega$.
	Therefore,
	\begin{gather*}
		g_\Phi(s,z)=\eps^{n-1}s^{n-1}\sqrt{1+\eps^2 \rho(\varphi(z))}
		g_\varphi(z), \quad g_\varphi(z):=\sqrt{\det G_\varphi(z)},\\
		\rho:=r^2\big(1-|\nabla^{\partial\omega} r|^2\big)\equiv r^2\big(|\nabla^{\R^n} r|^2-|\nabla^{\partial\omega} r|^2\big)\equiv r^2 |\partial_\nu r|^2
	\end{gather*}
	with $\partial_\nu$ being the normal derivative. Due to $|\partial_\nu r|\le 1$ we have $0\le \rho\le R^2$
	with $R$ from \eqref{rrr}. By \eqref{eq-veps} we obtain
	\begin{multline}
		\label{eq-uv3}
		\eps^{n-1}\int_0^\infty \int_{U} s^{n-1}\left|v\big( \Phi(s,z)\big)\right| g_\varphi(z)\dd z \dd s\\
		\le
		\int_{\partial \Omega_\eps} |v|\dd\sigma
		\le
		\sqrt{1+R^2\eps^2}\eps^{n-1}\int_0^\infty \int_{U} s^{n-1}\left|v\big( \Phi(s,z)\big)\right| g_\varphi(z)\dd z \dd s.
	\end{multline}
	Using the definition of $\Phi$ we obtain $v\big( \Phi(s,z)\big)=u\big(s,\varphi(z)\big)$ and
	\begin{align*}
		\int_0^\infty \int_{U} s^{n-1}\left|v\big( \Phi(s,z)\big)\right| g_\varphi(z)\dd z \dd s&
		=\int_0^\infty \int_{U} s^{n-1} \left|u( s, \varphi(z))\right| g_\varphi(z)\dd z \dd s\\
		&\equiv \int_0^\infty \int_{\partial\omega} s^{n-1}|u( s, t)| \dd\tau(t) \dd s,	
	\end{align*}
	and the substitution into \eqref{eq-uv3} gives the sought estimate.
	
	The above computations are classical for the case of smooth $\partial\omega$. In our case $\partial\omega$ is only a Lipschitz manifold, but the formulas are still valid a.e.: we refer to~\cite{rosen} for a detailed discussion. 	
\end{proof}

Recall that the subsets $C^\infty_{(0,\infty)}(\Bar\Omega_\eps)$ were defined in \eqref{cinfi}.
The restriction of each function from $C^\infty_{(0,\infty)}(\Bar\Omega_\eps)$ to $\partial\Omega_\eps$
is a continuous function with compact support, hence it is square integrable.

\begin{prop}\label{propb1}
Let $\eps>0$ be fixed. The linear map
\[
\gamma_0: C^\infty_{(0,\infty)}(\Bar\Omega_\eps)\to L^2(\partial\Omega_\eps),
\quad
\gamma_0 u:= u|_{\partial\Omega_\eps},
\]
extends uniquely to a bounded linear map from $H^1(\Omega_\eps)$ to $L^2(\partial\Omega_\eps)$.
Moreover, for any $\delta>0$ there exists $C_\delta>0$ such that
\[
\|\gamma_0 u\|^2_{L^2(\partial\Omega_\eps)}\le \delta \|\nabla u\|^2_{L^2(\Omega_\eps)}+C_\delta \|u\|^2_{L^2(\Omega_\eps)}
\text{ for any } u\in H^1(\Omega_\eps).
\]
\end{prop}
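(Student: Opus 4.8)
\emph{Strategy.} Thanks to Lemma~\ref{prop4dens} it suffices to prove the asserted inequality for $u\in C^\infty_{(0,\infty)}(\Bar\Omega_\eps)$: the unique bounded extension of $\gamma_0$ to $H^1(\Omega_\eps)$, and the validity of the inequality for all $u\in H^1(\Omega_\eps)$, then follow by a routine density/limiting argument (and taking $\delta=1$ gives the boundedness of $\gamma_0$, while taking $\delta$ arbitrary gives the stated estimate). The proof for smooth $u$ rests on two ingredients: the one-sided bound of Lemma~\ref{lem6a}, which reduces the surface integral over $\partial\Omega_\eps$ to an integral of slice-contributions in the variable $x_1$, combined with the trace inequality on the bounded Lipschitz domain $\omega$ (Proposition~\ref{prop-sobolev}(C.2)) applied slice by slice; and a Hardy-type inequality on the cone which absorbs the terms that are singular at the vertex.

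\emph{A Hardy inequality on the cone.} The first step is to show that
\begin{equation*}
	\int_{\Omega_\eps}\frac{u(x)^2}{x_1^2}\,\dd x\ \le\ C_H\int_{\Omega_\eps}|\nabla u|^2\,\dd x,\qquad u\in C^\infty_{(0,\infty)}(\Bar\Omega_\eps),\qquad C_H:=(1+R^2\eps^2)\Big(\tfrac{2}{n-1}\Big)^2 .
\end{equation*}
One uses that $\Omega_\eps$ is dilation invariant, hence of the form $\{r\xi:r>0,\ \xi\in\Sigma\}$ with $\Sigma\subset\mathbb{S}^{n}$ open; writing the integrals in polar coordinates $x=r\xi$, applying in the radial variable the one-dimensional weighted Hardy inequality $\int_0^\infty f(r)^2 r^{n-2}\,\dd r\le(\tfrac{2}{n-1})^2\int_0^\infty f'(r)^2 r^{n}\,\dd r$ (valid because $n\ge2$ and, for such $u$, $r\mapsto u(r\xi)$ vanishes near $r=0$ and has compact support), and finally using $|\partial_r u|\le|\nabla u|$ and $x_1^2\ge(1+R^2\eps^2)^{-1}|x|^2$ on $\Omega_\eps$, gives the claim. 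This is the step where the standing assumption $n\ge2$ (i.e. ambient dimension $\ge3$) is used.

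\emph{Slicing and the trace inequality on $\omega$.} Fix $u\in C^\infty_{(0,\infty)}(\Bar\Omega_\eps)$ and put $u_s:=u(s,\eps s\,\cdot)\in H^1(\omega)$ for $s>0$. Applying Lemma~\ref{lem6a} to the measurable function $x\mapsto u(x)^2$ on $\partial\Omega_\eps$ yields
\begin{equation*}
	\int_{\partial\Omega_\eps}u^2\,\dd\sigma\ \le\ \sqrt{1+R^2\eps^2}\,\eps^{n-1}\int_0^\infty s^{n-1}\Big(\int_{\partial\omega}u_s(t)^2\,\dd\tau(t)\Big)\dd s .
\end{equation*}
To the inner integral one applies Proposition~\ref{prop-sobolev}(C.2) in its standard scale-dependent form, for which the trace constant on $\omega$ may be taken as $C_0(1+\eta^{-1})$ for all $\eta>0$ (a classical fact, obtained from the usual proof via the divergence theorem), with the \emph{$x_1$-dependent} parameter $\eta(s):=\delta'/(\eps s)$, where $\delta'>0$ is to be fixed later. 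Rewriting $\int_\omega|\nabla u_s|^2\,\dd t$ and $\int_\omega u_s^2\,\dd t$ through integrals over $\eps s\omega$ by the scaling $x'=\eps s t$, and multiplying by $\eps^{n-1}s^{n-1}$, one gets for every $s>0$
\begin{equation*}
	\eps^{n-1}s^{n-1}\int_{\partial\omega}u_s^2\,\dd\tau\ \le\ \delta'\int_{\eps s\omega}|\nabla_{x'}u(s,x')|^2\,\dd x'\ +\ C_0\Big(\frac1{\eps s}+\frac1{\delta'}\Big)\int_{\eps s\omega}u(s,x')^2\,\dd x' .
\end{equation*}
Integrating in $s$ over $(0,\infty)$, using $|\nabla_{x'}u|\le|\nabla u|$ and recognising the weighted integral, the right-hand side is bounded by $\delta'\,\|\nabla u\|_{L^2(\Omega_\eps)}^2+\tfrac{C_0}{\eps}\int_{\Omega_\eps}u^2/x_1\,\dd x+\tfrac{C_0}{\delta'}\,\|u\|_{L^2(\Omega_\eps)}^2$.

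\emph{Conclusion and the main obstacle.} It remains to absorb $\int_{\Omega_\eps}u^2/x_1\,\dd x$. By Cauchy--Schwarz it is at most $\big(\int_{\Omega_\eps}u^2/x_1^2\,\dd x\big)^{1/2}\|u\|_{L^2(\Omega_\eps)}$, hence, by the Hardy inequality above and Young's inequality, at most $C_H^{1/2}\|\nabla u\|_{L^2(\Omega_\eps)}\|u\|_{L^2(\Omega_\eps)}\le C_H^{1/2}\big(\tfrac{\mu}{2}\|\nabla u\|_{L^2(\Omega_\eps)}^2+\tfrac{1}{2\mu}\|u\|_{L^2(\Omega_\eps)}^2\big)$ for any $\mu>0$. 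Collecting everything, for all $u\in C^\infty_{(0,\infty)}(\Bar\Omega_\eps)$,
\begin{equation*}
	\int_{\partial\Omega_\eps}u^2\,\dd\sigma\ \le\ \sqrt{1+R^2\eps^2}\,\Big(\delta'+\tfrac{C_0C_H^{1/2}\mu}{2\eps}\Big)\,\|\nabla u\|_{L^2(\Omega_\eps)}^2\ +\ C(\delta',\mu,\eps)\,\|u\|_{L^2(\Omega_\eps)}^2 ,
\end{equation*}
and given $\delta>0$ one chooses $\delta'$ and $\mu$ so small that the first coefficient is $\le\delta$, which proves the claim on $C^\infty_{(0,\infty)}(\Bar\Omega_\eps)$, and hence the proposition. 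The delicate point is the behaviour near the vertex: since $\Omega_\eps$ need not be Lipschitz there, the classical trace theorem does not apply, and the naive slicing produces the factor $1/(\eps s)$ above — a reflection of the mismatch at height $s$ between the $n$-dimensional surface measure on $\partial\Omega_\eps$ (of order $s^{n-1}$) and the $(n+1)$-dimensional volume (of order $s^{n}$) — which is \emph{not} controlled by $\|u\|_{L^2}$ alone; the Hardy inequality is exactly what repairs this. The unboundedness of $\Omega_\eps$ causes no further difficulty: it is absorbed by the same estimate because the $x_1$-dependent trace parameter $\eta(s)$ is small for large $x_1$.
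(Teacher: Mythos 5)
Your proof is correct and reaches the conclusion by a route that is structurally parallel to the paper's but differs in the two key ingredients. Both proofs reduce to smooth compactly supported functions via Lemma~\ref{prop4dens}, push the surface integral through Lemma~\ref{lem6a}, and then apply a scale-dependent trace inequality on $\omega$ slice by slice with a parameter chosen $\sim 1/(\eps s)$; both must then absorb the resulting singular weight $1/x_1$ near the vertex. The paper derives the trace inequality on $\omega$ from the spectral bound $E_1(B_r)\ge -N_\omega r - cr^2$ of Lemma~\ref{lem-1}(c), whereas you invoke the classical divergence-theorem form $\int_{\partial\omega}v^2\le\eta\int_\omega|\nabla v|^2 + C_0(1+\eta^{-1})\int_\omega v^2$; these are the same type of estimate (the paper's version even has the same $\eta + C/\eta$ structure after setting $\eta=1/r$), so this is a cosmetic difference. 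The more genuine difference is in how the singular weight is handled. The paper substitutes $f_t(s)=s^{n/2}u(s,st)$, computes $\int_0^\infty f_t'^2\,\dd s$, drops the non-positive centrifugal contribution (using $n\ge2$), and then bounds $\int \tfrac{N_\omega}{s}f_t^2\,\dd s$ by the bottom eigenvalue of the one-dimensional Coulomb operator $A_{1/\mu}$ with effective $n=2$ from Subsection~\ref{compop}. You instead prove the multi-dimensional Hardy inequality $\int_{\Omega_\eps}u^2/|x|^2\,\dd x\le (\tfrac{2}{n-1})^2\int_{\Omega_\eps}|\nabla u|^2\,\dd x$ directly in polar coordinates (again using $n\ge2$ to make the radial Hardy constant finite), pass from $|x|$ to $x_1$ by the geometric estimate $x_1^2\ge|x|^2/(1+R^2\eps^2)$, and then control $\int u^2/x_1$ via Cauchy--Schwarz and Young. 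These two controls of the vertex singularity are essentially equivalent in strength (the paper's Coulomb spectral bound is itself a Hardy--Cauchy--Schwarz estimate in disguise), but your version is cleaner as a stand-alone statement and cleanly separates the geometric Hardy inequality from the slicing argument, at the mild cost of stepping outside the paper's self-contained spectral toolkit. One small presentational difference: the paper first normalizes to $\eps=1$ by absorbing $\eps$ into $\omega$, which removes $\eps$ from the bookkeeping; you keep $\eps$ explicit throughout, which is harmless since $\eps$ is fixed in the statement. All steps in your argument check out.
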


\begin{proof}
It is sufficient to consider $\eps=1$ (as general values of $\eps$ can be absorbed by taking $\eps\omega$ instead of $\omega$).
Since $C^\infty_{(0,\infty)}(\Bar\Omega_1)$ is dense in $H^1(\Omega_1)$ by Proposition~\ref{prop4dens},
it is sufficient to show that for any $\delta>0$ there exists $C_\delta>0$ such that for any 
$u\in C^\infty_{(0,\infty)}(\Bar\Omega_1)$ there holds
\begin{equation}
	  \label{equuu}
\int_{\partial\Omega_1} u^2\dd\sigma \le  \delta
\int_{\Omega_1} |\nabla u|^2\dd x+ C_\delta\int_{\Omega_1} u^2\dd x.
\end{equation}

We use the spectral analysis of the operators $B_r$ from Subsection~\ref{sec-1d}. By Lemma~\ref{lem-1}
one can find a constant $c>0$ such that 
\[
\int_\omega |\nabla v|^2\dd t-r\int_{\partial \omega} v^2\dd\tau\ge -(N_\omega r+cr^2)\int_\omega v^2\dd t \text{ for all }
v\in H^1(\omega), \ r>0.
\]
and the inequality can be rewritten as
\begin{equation}
	\label{uom1}
\int_{\partial \omega} v^2\dd\tau\le \dfrac{1}{r}\int_\omega |\nabla v|^2\dd t
+(N_\omega+cr)\int_\omega v^2\dd t
\text{ for all }
v\in H^1(\omega), \ r>0.
\end{equation}

Recall that by Lemma~\ref{lem6a} we have for any $u\in C^\infty_{(0,\infty)}(\Bar\Omega_1)$:
\[
\int_{\partial \Omega_1} u^2\dd\sigma\le
\sqrt{1+R^2}\int_0^\infty s^{n-1}\int_{\partial\omega} u(s,s t)^2 \dd\tau(t) \dd s.
\]
We are going to control the integral over $\partial\omega$  using~\eqref{uom1} with $v:t\mapsto u(s,st)$
and $r=r(s)$, which gives
\begin{equation}
	\label{eqiii}
	\begin{aligned}
\int_{\partial \Omega_1} u^2\dd\sigma&\le\sqrt{1+R^2}(I_1+I_2),\\
I_1&:=\int_0^\infty \frac{s^{n-1}}{r(s)} \int_\omega |\nabla_t u(s, s t)|^2\dd t\dd s,\\
I_2&:=\int_0^\infty s^{n-1}\big(N_\omega+cr(s)\big) \int_\omega u(s,s t)^2\dd t\dd s.
\end{aligned}
\end{equation}
Now we remark that
\begin{align*}
I_1&
	=
	\int_0^\infty \frac{s^{n+1}}{r(s)} \int_\omega |(\nabla_{x'} u)(s,s t)|^2\dd t\dd s\\
	&=\int_0^\infty \frac{s}{r(s)}\int_{s\omega} |(\nabla_{x'} u)(s,x')|^2\dd x'\dd s\le \int_0^\infty \frac{s}{r(s)}\int_{s\omega} |\nabla u(s,x')|^2\dd x'\dd s.
\end{align*}
Taking $r(s):=\mu s$ with a constant $\mu>0$ to be chosen later we obtain
\[
I_1\le \frac{1}{\mu}\int_0^\infty \int_{s\omega} |\nabla u(s,x')|^2\dd x'\dd s
=\frac{1}{\mu}\int_{\Omega_1} |\nabla u|^2\dd x.
\]
For the same choice of $r(s)$ one has
\begin{align*}
I_2&=\int_0^\infty s^{n-1}(N_\omega+c\mu s) \int_{\omega}u(s, s t)^2\dd t\dd s\\
&=\underbrace{N_\omega \int_0^\infty s^{n-1}\int_{\omega}u(s,s t)^2\dd t\dd s}_{=:J_1}
+c\mu \underbrace{\int_0^\infty s^n\int_{\omega}u(s, s t)^2\dd t\dd s}_{=:J_2}.
\end{align*}
The second term is easy to evaluate:
\begin{align*}
J_2&=\int_0^\infty \int_{s\omega}u(s,x')^2\dd x'\dd s=\int_{\Omega_1} u^2\dd x.
\end{align*}
The term $J_1$ requires a bit more work. We rewrite
\begin{equation}
	 \label{eqj1}
J_1=\int_\omega \int_0^\infty \frac{N_\omega}{s}f_t(s)^2\dd s \dd t\quad \text{with the function}\quad f_t:s\mapsto s^{\frac{n}{2}}u(s,st).
\end{equation}
For each fixed $t$ one has $f_t\in C^\infty_c(0,\infty)$. Using the spectral analysis of Subsection~\ref{compop} (consider the first eigenvalue of $A_{1/\mu}$ with $n=2$) we have
\[
\int_0^\infty \Big[f_t'(s)^2 -\frac{\mu N_\omega}{s}f_t(s)^2\Big]\dd s\ge -\frac{\mu^2 N_\omega^2}{4}\int_0^\infty f_t(s)^2\dd s,
\]
which we rewrite as
\begin{equation}
	\label{eqft}
\int_0^\infty\frac{N_\omega}{s}f_t(s)^2\dd s\le \frac{1}{\mu}\int_0^\infty f'_t(s)^2\dd s+\frac{\mu N_\omega^2}{4}\int_0^\infty f_t(s)^2\dd s.
\end{equation}
We have
\begin{align*}
f_t'(s)^2&=\big(\tfrac{n}{2} s^{\frac{n}{2}-1} u(s,st) +s^{\frac{n}{2}}\partial_s u(s,st)\big)^2\\
&=\tfrac{n^2}{4} s^{n-2} u(s,st)^2 +n s^{n-1} u(s,st)\partial_s u(s,st) + s^n \big|\partial_s u(s,st)\big|^2.
\end{align*}
Using
\begin{align*}
	\int_0^\infty n s^{n-1} u(s,st)\partial_s u(s,st)\dd s&= \frac{n}{2} \int_0^\infty s^{n-1} \partial_s\big(u(s,st)^2\big)\dd s\\
	&=- \frac{n}{2}\,(n-1)\int_0^\infty s^{n-2} u(s,st)^2\dd s
\end{align*}
we arrive at
\[
\int_0^\infty f_t'(s)^2\dd s=\Big[ \frac{n^2}{4}- \frac{n}{2}\,(n-1)\Big]\int_0^\infty s^{n-2}u(s,st)^2\dd s
+\int_0^\infty s^n \big|\partial_s u(s,st)\big|^2\dd s.
\]
Using $n\ge 2$ one obtains
\[
\frac{n^2}{4}- \frac{n}{2}\,(n-1)=\frac{n}{4} \big( n-2(n-1)\big)=\frac{n}{4}\,(2-n)\le 0,
\]
which gives
\[
\int_0^\infty f_t'(s)^2\dd s\le \int_0^\infty s^n \big|\partial_s u(s,st)\big|^2\dd s.
\]
We compute (with the same $R:=\sup_{t\in \omega}|t|$ as above)
\begin{align*}
\big|\partial_s u(s,st)\big|^2&=\big|\partial_{x_1} u(s,st)+t\cdot \nabla_{x'}u(s,st)\big|^2\le 2 |\partial_{x_1} u(s,st)|^2+ 2\big|t\cdot \nabla_{x'}u(s,st)\big|^2\\
&\le 2 |\partial_{x_1} u(s,st)|^2+ 2R^2\big|\nabla_{x'}u(s,st)\big|^2\le 2(1+R^2) |(\nabla u)(s,st)|^2,
\end{align*}
which results in
\[
\int_0^\infty f_t'(s)^2\dd s\le 2(1+R^2)\int_0^\infty s^n |(\nabla u)(s,st)|^2\dd s.
\]
The substitution into \eqref{eqft} gives
\[
\int_0^\infty\frac{N_\omega}{s}f_t(s)^2\dd s\le \frac{2(1+R^2)}{\mu}\int_0^\infty s^n |(\nabla u)(s,st)|^2\dd s+ 
\frac{\mu N_\omega^2}{4}\int_0^\infty s^n  u(s,st)^2\dd s,
\]
and using \eqref{eqj1} one obtains
\begin{align*}
	J_1&\le \frac{2(1+R^2)}{\mu}\int_0^\infty \int_\omega s^n |(\nabla u)(s,st)|^2\dd s \dd t +\frac{\mu N_\omega^2}{4}\int_0^\infty \int_\omega  s^n u(s,st)^2\dd t \dd s\\
	&=\frac{2(1+R^2)}{\mu}\int_0^\infty \int_{s\omega} |\nabla u(s,x')|^2\dd x' \dd t +\frac{\mu N_\omega^2}{4}\int_0^\infty \int_{s\omega}   u(s,x')^2\dd x' \dd s\\
	&=\frac{2(1+R^2)}{\mu}\int_{\Omega_1}|\nabla u|^2\dd x+\frac{\mu N_\omega^2}{4}\int_{\Omega_1}u^2\dd x.
\end{align*}
Using the above estimates for $J_1$ and $J_2$ one obtains:
\[
I_2\le 
\frac{2(1+R^2)}{\mu}\int_{\Omega_1}|\nabla u|^2\dd x+\mu \big(\frac{N_\omega^2}{4}+c\big)\int_{\Omega_1}u^2\dd x,
\]
and the substitution into \eqref{eqiii} gives
\[
\int_{\partial \Omega_1} u^2\dd\sigma\le \sqrt{1+R^2}\dfrac{2(1+R^2)+1}{\mu}\int_{\Omega_1}|\nabla u|^2\dd x
+\sqrt{1+R^2}\,\mu \big(\frac{N_\omega^2}{4}+c\big)\int_{\Omega_1}u^2\dd x.
\]
For any $\delta>0$ one can take $\mu$ sufficiently large, such that the coefficient in front of the first integral
becomes smaller than $\delta$, and this proves the required inequality \eqref{equuu}.
\end{proof}

As an easy corollary we obtain that our spectral problem is well-posed:

\begin{cor}\label{cor-trace}
The bilinear form $q_\eps$ is semibounded from below and closed for any $\eps>0$.
\end{cor}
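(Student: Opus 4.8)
The plan is to deduce both assertions directly from Proposition~\ref{propb1}, which simultaneously makes the boundary term in $q_\eps$ well-defined on all of $H^1(\Omega_\eps)$ and bounds it by an arbitrarily small multiple of the Dirichlet energy plus a constant multiple of the $L^2$-norm. Throughout I would write $\gamma_0 u$ for the trace of $u\in H^1(\Omega_\eps)$ afforded by that proposition, so that $q_\eps(u,u)=\|\nabla u\|_{L^2(\Omega_\eps)}^2-\|\gamma_0 u\|_{L^2(\partial\Omega_\eps)}^2$ is a finite real number for every $u\in D(q_\eps)=H^1(\Omega_\eps)$.

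First I would prove semiboundedness. Fix $\delta=\tfrac12$ and let $C_\delta>0$ be the corresponding constant from Proposition~\ref{propb1}. Then for every $u\in H^1(\Omega_\eps)$,
\[
q_\eps(u,u)=\|\nabla u\|_{L^2(\Omega_\eps)}^2-\|\gamma_0 u\|_{L^2(\partial\Omega_\eps)}^2\ge(1-\delta)\|\nabla u\|_{L^2(\Omega_\eps)}^2-C_\delta\|u\|_{L^2(\Omega_\eps)}^2\ge -C_\delta\|u\|_{L^2(\Omega_\eps)}^2,
\]
so $q_\eps$ is bounded from below by $-C_\delta$.

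Next, for closedness I would show that the form norm $u\mapsto\bigl(q_\eps(u,u)+(C_\delta+1)\|u\|_{L^2(\Omega_\eps)}^2\bigr)^{1/2}$ is equivalent to $\|\cdot\|_{H^1(\Omega_\eps)}$ on $D(q_\eps)=H^1(\Omega_\eps)$. For the lower bound, the inequality above with $\delta=\tfrac12$ gives
\[
q_\eps(u,u)+(C_\delta+1)\|u\|_{L^2(\Omega_\eps)}^2\ge\tfrac12\|\nabla u\|_{L^2(\Omega_\eps)}^2+\|u\|_{L^2(\Omega_\eps)}^2\ge\tfrac12\|u\|_{H^1(\Omega_\eps)}^2,
\]
and for the upper bound one simply drops the nonpositive boundary term:
\[
q_\eps(u,u)+(C_\delta+1)\|u\|_{L^2(\Omega_\eps)}^2\le\|\nabla u\|_{L^2(\Omega_\eps)}^2+(C_\delta+1)\|u\|_{L^2(\Omega_\eps)}^2\le(C_\delta+1)\|u\|_{H^1(\Omega_\eps)}^2.
\]
Since $H^1(\Omega_\eps)$ is complete for its own norm, it is complete for the equivalent form norm; as $H^1(\Omega_\eps)$ is moreover dense in $L^2(\Omega_\eps)$ (it contains $C_c^\infty(\Omega_\eps)$), this is exactly the assertion that $q_\eps$ is a densely defined closed form.

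There is no genuine obstacle at this stage: all the analytic work has been carried out in Proposition~\ref{propb1}, and the corollary is the standard KLMN-type observation that a symmetric perturbation of the closed nonnegative Dirichlet form $u\mapsto\|\nabla u\|_{L^2(\Omega_\eps)}^2$ on $H^1(\Omega_\eps)$ with relative form bound strictly less than $1$ remains semibounded and closed.
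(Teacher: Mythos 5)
Your proof is correct and is exactly the standard KLMN-type argument the paper has in mind: the paper omits the proof entirely as "an easy corollary" of Proposition~\ref{propb1}, and your argument (take $\delta=\tfrac12$, derive semiboundedness, then show the shifted form norm is equivalent to the $H^1$-norm and invoke completeness of $H^1(\Omega_\eps)$) is precisely the expected chain of deductions.
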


\end{document}